\documentclass[11pt]{article}

\usepackage[left=2cm, right=2cm, top=2cm, bottom=2cm]{geometry}

\usepackage[T1]{fontenc}
\usepackage[polish,english]{babel}
\usepackage[utf8]{inputenc}
\usepackage{enumitem}
\usepackage{mathrsfs}
\usepackage{amsmath,amsthm,amsfonts,amssymb,amscd,bm,array}
\usepackage{cases}
\usepackage{latexsym}
\usepackage{pstricks}
\usepackage{epsfig}
\usepackage{verbatim}
\usepackage{graphicx}
\usepackage{multirow}
\usepackage{extarrows}
\usepackage{color}
\usepackage{colortbl}
\usepackage{diagbox}
\usepackage{tikz}
\usetikzlibrary{positioning}
\usepackage{xcolor}
\usepackage[square, comma, sort&compress, numbers]{natbib}

\def\vs{\vskip3pt}
\def\br{\mathbb R}

\newcommand{\subcaptiontext}[2]{%
    \smallskip
    {\footnotesize \textbf{(#1)} #2}%
}

\def\itemc{\itemindent=10pt\labelsep=8pt\labelwidth10pt\itemsep=4pt}

\def\vs{\vskip.2cm}

\def\ve{\varepsilon}

\def\sign{\text{\rm sign\,}}
\def\dim{\text{\rm dim\,}}
\def\ker{\text{\rm Ker\,}}
\def\bbR{{\mathbb R}}
\def\bbZ{{\mathbb Z}}
\def\cV{{\mathcal V}}

\def\br{{\mathbb{R}}}

\def\br{\mathbb R}

\def\vs{\vskip.3cm}

\def\ve{\varepsilon}

  \newcommand{\norm}[1]{\left\lVert#1\right\rVert}

\newtheorem{remark}{Remark}[section]

\newtheorem{remark-definition}{Remark and Definition}[section]
\newtheorem{rem-not}{Remark and Notation}[section]
\newtheorem{theorem}{Theorem}
\newtheorem{definition}{Definition}
\newtheorem{proposition}{Proposition}
\newtheorem{lemma}{Lemma}

\begin{document}

\title{Global Hopf Bifurcation and Symmetric Periodic Solutions in Multi-Agent Systems with Neutral Distributed Delays} 
 \author{Casey Crane} 


\maketitle

\begin{abstract}
We study the emergence of symmetric oscillatory behavior in multi-agent systems where each agent incorporates a continuous memory of its past states and past rates of change, modeled by distributed retarded and neutral delays. The closed-loop dynamics are described by a system of nonlinear neutral functional differential equations (NFDEs) with a high degree of symmetry, arising from a network of homogeneous agents. By reformulating the problem as a fixed point operator equation, we apply equivariant degree theory to establish rigorous conditions for unbounded global Hopf bifurcation from the consensus equilibrium. Our main results provide sufficient conditions for the local asymptotic stability of consensus and for the existence of unbounded global branches of non-constant periodic solutions with prescribed spatio-temporal symmetries. The question of whether such periodic solutions are stable (and therefore constitute periodic multiconsensus) is not resolved by the degree method; we address it in an illustrative example via numerical simulation. The example, which models eight coupled asset markets with momentum traders and fundamentalists, demonstrates how memory-driven instability can generate periodic boom-bust cycles across clusters of assets. The numerical experiments confirm the bifurcation predictions and reveal the stability of the resulting oscillations, illustrating the power of combining symmetric bifurcation theory with targeted numerical analysis.

\end{abstract}

\section{Introduction}\label{sec:introduction}
Multi-agent systems (MAS) have been a subject of great research interest in recent years. Originally introduced to model self-organizing and cooperative behaviors in biological systems \cite{vicsek1995novel}, they have since been used to model phenomena as diverse as coupled oscillators, swarming and herding behaviors in animals and UAVs \cite{reynolds1987boid}, satellite clusters, and congestion mitigation strategies for internet networks\cite{li2019survey,amirkhani2022consensus}.
A MAS is a system of multiple independent agents operating in a shared environment based on local information. Agents must coordinate with one another, exchange information, and negotiate bilaterally in order to achieve their goals, in much the same way as human beings do in our own daily interactions. Due to these intricate interactions between agents, MASs can exhibit very rich and complex global behaviors. 
\vs
The problem of consensus in MASs has been widely studied. Consensus describes a situation where interactions cause agents to reach an agreement on a common value or state \cite{saber2004consensus,saber2005,jadbabaie2003coordination,ren2005consensus}. As the theory and applications of MASs have expanded to encompass antagonistic or competitive relations between agents, corresponding generalizations of the concept of consensus have also been advanced. These include bipartite consensus \cite{hu2014bipartite, tian2018bipartite} and periodic multiconsensus \cite{chen2014multiconsensus}, where subpopulations of agents display stable oscillatory behaviors. It is also common to study time-delay in multi-agent systems \cite{tian2008consensus,lin2008average}, often interpreted as lag in information transmission or processing. In these cases, Hopf bifurcation may be used to show the stability of consensus within certain parameter ranges, and its breakdown into oscillatory behaviors \cite{chen2014multiconsensus,xie2015second}. 
\vs
However, existing analyses of delayed consensus remain limited in two crucial respects. First, they predominantly consider discrete delays, where agents respond to instantaneous snapshots of past states. A more realistic model incorporates distributed delays, representing weighted averages of history over continuous intervals---a mathematical structure we interpret as \textit{continuous memory} of past states, capturing phenomena such as sensor averaging, network buffering, as well as systems where agents account for noisy data in the state memory. Second, and more significantly, the literature has almost exclusively focused on retarded delay equations, where delays appear only in the state variables. The possibility of neutral delays, where delays also appear in the derivatives of states, remains underexplored in MAS contexts.
\vs
Recent work hints at emerging interest in these directions. Wang and Han \cite{wang2024distribution} analyzed quasi-polynomials of neutral type for consensus protocol design using delayed state information, marking one of the first appearances of neutral-type systems in the MAS literature. Haskovec \cite{haskovec2022asymptotic} studied neutral delay differential equations in consensus models with anticipation, distinguishing between transmission-type and reaction-type delays, and numerically demonstrated that moderate anticipation can promote convergence while excessive anticipation destabilizes consensus. Shariati et al \cite{shariati2017neutral} showed that the application of a PID consensus protocol to a general linear leader-following model produces a system whose closed loop dynamics are described by a discrete time-delay equation of neutral type. These contributions suggest that neutral-type MAS are beginning to attract attention, yet they remain confined to linear analyses and discrete delay formulations. 
\vs
In this paper, we will define and study the closed-loop dynamics of a MAS which incorporates continuous memory of past states as well as past derivatives, modeled as a system of neutral functional differential equations (NFDEs) featuring distributed delays of neutral and retarded type. We will show conditions for consensus at the trivial solution, and study the problem of Hopf bifurcation of unbounded global branches of solutions classified according to isotropy type. First, let us define the system to be studied and formulate the main results. Consider a system of $n$ identical equations, where the $i$-th equation is given by:
\[
\frac{d}{dt}\left[x_i - \int_0^{\tau_1} g(x_i(t-s))ds\right] = -ax_i - \alpha f\left( \int_0^{\tau_2} x_i(t-s)ds \right) + h_i(x), \;\quad x_i(t) \in \mathbb R.
\]
This system can be viewed as modeling the closed-loop dynamics of a continuous time multi-agent system of $n$ homogeneous agents, where each agent updates its state according to a protocol that incorporates:
\begin{itemize}
\item A self-regulation term $-ax_i(t)$, representing the agent's tendency to return to a reference point at $x\equiv0$.
\item An interaction rule given by some nonlinear coupling function $h_i:\mathbb R^n \to \mathbb R$ which defines a fixed interaction topology with state-dependent effective weights.
\item A nonlinear function of its own continuous state history, given as a function of the distributed retarded delay term $-\alpha f(\int_0^{\tau_2}x_i(t-s)ds)$, where $\alpha$ is taken as a bifurcation parameter.
\item A nonlinear function of their own continuous decision history, given as a function of a distributed neutral delay term $\frac{d}{dt}\left[\int_0^{\tau_1}g(x_i(t-s))ds\right]$,
\end{itemize}
where $f,g:\mathbb R\to \mathbb R$ are nonlinear response functions.
If we put $\bm f,\bm g, \bm h:\mathbb R^n \to \mathbb R^n$ as $\bm f(\bm x) := (f(x_1),\dots,f(x_n))^T$, $\bm g(\bm x) := (g(x_1),\dots,g(x_n))^T$, and $\bm h(\bm x) := (h_1(x),\dots,h_n(x))^T \in \mathbb R^n$, 
then the closed-loop dynamics of the full system can be written
\begin{equation}\label{eq:basic-sys}
\frac{d}{dt}\left[\bm x - \int_0^{\tau_1} \bm g(\bm x(t-s))ds\right] = -a \bm x - \alpha \bm f\left( \int_0^{\tau_2} \bm x(t-s)ds \right) - \bm h(\bm x).
\end{equation}
\vs
To account for the symmetries of the agent interactions, let $\Gamma_0\leq S_n$ be a finite symmetry group which acts by permuting the indices of $\bm x$. Let $V:= \mathbb R^n$, and put $\Gamma := \Gamma_0 \times \mathbb Z_2$, where $\mathbb Z_2$ acts antipodally, i.e. the action of $(\sigma,\pm1)$ on $\bm x \in V$ is given by
\[
    (\sigma,\pm1)\bm x :=  (\sigma,\pm1)(x_1,\dots,x_n)^T=\pm(x_{\sigma(1)},\dots,x_{\sigma(n)})^T.
\]
We will make the following assumptions about $\bm f, \bm g,$ and $\bm h$:
\begin{enumerate}[label=($C_\arabic*$)]\setcounter{enumi}{0}\itemc
\item\label{c1} $\bm f$, $\bm g$, and $\bm h$ are $\Gamma$-equivariant (and therefore are odd functions).
\item\label{c2} $\bm f$, $\bm g$, and $\bm h$ are continuous functions differentiable at 0, and $b := f'(0), \gamma := g'(0), C:= Dh(0)$ satisfy:
    \begin{enumerate}
        \item $b\neq0$.
        \item $0 < \gamma < 1$.
        \item C is a $\Gamma$-equivariant symmetric matrix.
    \end{enumerate}
\item\label{c3} $\bm g$ is $\kappa$-Lipschitzian with $\kappa<1$, i.e. 
\[
\exists_{\kappa \in [0,1)}\quad \forall_{\varphi,\psi \in C(\bbR^n;\bbR)} \quad ||\bm g(\varphi) - \bm g(\psi)|| \leq \kappa||\varphi - \psi||_\infty
\]
\end{enumerate}
\vs
We will show that the trivial solution at $\bm x\equiv0$ is locally asymptotically stable, and for this reason we will refer to it as the \textit{reference consensus} (or \textit{consensus equilibrium}). We will also show Hopf bifurcation of global branches of non-constant periodic solutions from the $\bm x\equiv 0$ consensus manifold, give sufficient conditions for these branches to be unbounded, classify these branches according to their spatio-temporal symmetries and, for an illustrative example, use numerical methods to show that these periodic solutions constitute periodic multiconsensus (i.e. that they are symmetric, locally asymptotically stable, non-constant periodic solutions). Specifically, for a system of the form \eqref{eq:basic-sys} satisfying \ref{c1} -- \ref{c3}, we will prove the following theorems:
\begin{theorem}\label{thm:mas-asymp}
Denote by $\alpha_0$ the smallest $\alpha>0$ satisfying \eqref{eq:coincidence}. If $0<\gamma\tau_1\leq 1, b>0$, and $a_j>0$ for all $j$, then $\bm x\equiv0$ is a locally asymptotically stable consensus equilibrium for all $\alpha\in(0,\alpha_0)$.  
\end{theorem}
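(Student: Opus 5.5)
Linearize \eqref{eq:basic-sys} at $\bm x\equiv 0$ and decompose along the eigenvalues of $C$. By \ref{c2}, $C$ is symmetric and $\Gamma$-equivariant, so $V$ splits orthogonally into eigenspaces $E_j$ of $C$, $Cv=c_jv$. I take $a_j:=a+c_j$, which is consistent with the sign convention $-\bm h$ in \eqref{eq:basic-sys}. On each $E_j$ the linearization is the scalar neutral equation
\[
\frac{d}{dt}\Big[x-\gamma\int_0^{\tau_1}x(t-s)\,ds\Big]=-a_jx-\alpha b\int_0^{\tau_2}x(t-s)\,ds .
\]
Its characteristic function is
\[
\Delta_j(\lambda,\alpha)=\lambda\Big(1-\gamma\tfrac{1-e^{-\lambda\tau_1}}{\lambda}\Big)+a_j+\alpha b\,\tfrac{1-e^{-\lambda\tau_2}}{\lambda}
=\lambda-\gamma(1-e^{-\lambda\tau_1})+a_j+\alpha b\,\tfrac{1-e^{-\lambda\tau_2}}{\lambda}.
\]
I read \eqref{eq:coincidence} as the condition that $\Delta_j(i\beta,\alpha)=0$ for some $\beta>0$ and some $j$, that is, a purely imaginary root appears. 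Local asymptotic stability of the nonlinear system will follow from the principle of linearized stability for NFDEs, which I will cite (Hale--Verduyn Lunel). Its hypotheses are two. First, the difference operator $D\varphi=\varphi(0)-\int_0^{\tau_1}\bm g(\varphi(-s))ds$ must be stable; this holds because its linear part has norm at most $\gamma\tau_1$, or $\kappa\tau_1$ via \ref{c3}. Second, every characteristic root must lie in $\operatorname{Re}\lambda\le -\delta<0$.

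\textbf{Step 1 (essential spectrum and the neutral part).} The atom $D$ has characteristic function $1-\gamma\frac{1-e^{-\lambda\tau_1}}{\lambda}$. Write $\lambda=\mu+i\nu$ with $\mu\ge 0$ and $\lambda\ne 0$. Then
\[
\Big|\tfrac{1-e^{-\lambda\tau_1}}{\lambda}\Big|=\Big|\int_0^{\tau_1}e^{-\lambda s}ds\Big|\le\tau_1,
\]
with equality only at $\lambda=0$. Since $\gamma\tau_1\le1$, the atom has no zeros in the closed right half-plane apart from possibly $\lambda=0$ in the borderline case $\gamma\tau_1=1$. Because the kernel is distributed and has no point mass, the essential spectral radius of the $D$-part is zero. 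So there is no accumulation of roots at the imaginary axis, and the spectral bound is determined by isolated roots.

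\textbf{Step 2 (roots at $\alpha=0^+$ and $\lambda=0$).} Since $b>0$ and $\alpha>0$, we have $\Delta_j(0,\alpha)=a_j+\alpha b\tau_2>0$, using the limits $\frac{1-e^{-\lambda\tau}}{\lambda}\to\tau$. Hence $\lambda=0$ is never a root. For $\mu\ge0$ I write $\Delta_j=\lambda+a_j-\gamma(1-e^{-\lambda\tau_1})+\alpha b\,E(\lambda)$.

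Consider first the limit $\alpha\to0$. Let $\phi(\lambda)=\lambda-\gamma(1-e^{-\lambda\tau_1})$. Then $\operatorname{Re}\phi(\lambda)=\mu-\gamma+\gamma e^{-\mu\tau_1}\cos\nu\tau_1$, and the extra $a_j>0$ must be accounted for. The cleaner route is
\[
\phi(\lambda)=\lambda\Big(1-\gamma\int_0^{\tau_1}e^{-\lambda s}ds\Big),
\]
and the bracket has positive real part for $\mu\ge 0$, $\lambda\neq0$, by Step 1. A Rouché or argument-principle comparison on a large right half-disc, with $a_j>0$ included, then shows that $\lambda+a_j-\gamma(1-e^{-\lambda\tau_1})$ has no roots with $\mu\ge0$. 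Alternatively, if $\lambda$ were a root with $\mu\ge0$, then
\[
|\lambda+a_j|=\gamma|1-e^{-\lambda\tau_1}|\le\gamma\tau_1|\lambda|\le|\lambda|<|\lambda+a_j|,
\]
a contradiction. This uses $|1-e^{-\lambda\tau_1}|\le\tau_1|\lambda|$ for $\mu\ge 0$ and $a_j>0$. It is the key clean estimate.

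Roots also cannot enter from infinity as $\alpha$ grows, because $\Delta_j/\lambda\to 1$ uniformly on $\mu\ge0$, $|\lambda|\to\infty$. The perturbation terms are $O(1/|\lambda|)$ and the neutral term is bounded by $\gamma\tau_1 |\lambda|$ times a factor tending to $0$ away from the axis. This needs care near the axis, which I handle with the Step 1 bound.

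\textbf{Step 3 (continuation in $\alpha$).} The roots depend continuously on $\alpha$, and no root can enter $\operatorname{Re}\lambda\ge0$ through $\lambda=0$ (Step 2) or from infinity. So the first crossing must occur through some $i\beta$ with $\beta>0$, which is precisely a solution of \eqref{eq:coincidence}. By the minimality of $\alpha_0$, all roots of all $\Delta_j$ satisfy $\operatorname{Re}\lambda<0$ for $\alpha\in(0,\alpha_0)$. There are finitely many $j$, and roots stay uniformly away from the axis at large $|\lambda|$. This gives a uniform $\delta>0$, and the linearized stability theorem finishes the proof.

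\textbf{Main obstacle.} The hardest step is the uniform control at infinity near the imaginary axis in the neutral setting. I must exclude a sequence of roots with $|\nu|\to\infty$ and $\mu\downarrow0$ while $\alpha$ varies on compact subsets of $(0,\alpha_0)$. I would first try to show directly that $|\Delta_j(\lambda)|\ge |\lambda|(1-\gamma\tau_1')-M$ for large $|\lambda|$ with $\mu\ge -\epsilon$. This uses the fact that $\big|\int_0^{\tau_1}e^{-\lambda s}ds\big|\to0$ as $|\nu|\to\infty$ uniformly in the strip, by Riemann--Lebesgue, which removes the need for the strict inequality $\gamma\tau_1<1$.
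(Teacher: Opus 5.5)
Your argument has the same skeleton as the paper's proof. You show that at $\alpha=0$ every root of the isotypic characteristic function lies in $\mathrm{Re}\,\lambda<0$. You rule out a crossing through $\lambda=0$ for $\alpha>0$. You then use continuity in $\alpha$, so stability can first be lost only through a pair $\pm i\beta$, $\beta>0$, i.e.\ at $\alpha\ge\alpha_0$.

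The differences are in execution:
\begin{itemize}
\item Your one-line estimate $|\lambda+a_j|=\gamma|1-e^{-\lambda\tau_1}|\le\gamma\tau_1|\lambda|\le|\lambda|<|\lambda+a_j|$ replaces the paper's split into real and imaginary parts. The paper uses the mean value theorem when $v\neq0$ and convexity of $r(u)=u+\gamma e^{-u\tau_1}-\gamma$ when $v=0$.
\item You only use $\Delta_j(0,\alpha)=a_j+\alpha b\tau_2>0$. The paper instead excludes every real positive root for all $\alpha>0$; that is its extra ``no steady-state bifurcation'' claim, which the theorem does not need.
\item You make explicit what the paper compresses into ``the spectrum depends continuously on $\alpha$'': no roots entering from infinity, a uniform spectral gap, and an actual linearized-stability theorem.
\end{itemize}
Reading \eqref{eq:coincidence} as ``$\Delta_j(i\beta,\alpha)=0$ for some $\beta>0$'' is also the safer reading, since the paper's formula divides by $\sin(\beta\tau_2)$.

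One justification is wrong as written: that the difference operator $D$ is stable ``because its linear part has norm at most $\gamma\tau_1$''. The theorem allows $\gamma\tau_1=1$. Then every constant solves $y(t)=\gamma\int_0^{\tau_1}y(t-s)\,ds$; equivalently, $\lambda=0$ is a zero of the atom's characteristic function, as you observe yourself in Step 1. So $D$ is not stable in Hale's sense, and the NFDE result you cite does not apply. The fallback ``$\kappa\tau_1$ via \ref{c3}'' does not rescue this if \ref{c3} bounds the Lipschitz constant of $g$, since then $\kappa\ge\gamma$. If instead \ref{c3} is read as $\varphi\mapsto\int_0^{\tau_1}\bm g(\varphi(-s))\,ds$ being a $\kappa$-contraction, then $\gamma\tau_1\le\kappa<1$, and you should say explicitly that the borderline case is vacuous.

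The clean repair also removes your ``main obstacle''. For continuous $\bm x$, $\int_0^{\tau_1}\bm g(\bm x(t-s))\,ds=\int_{t-\tau_1}^{t}\bm g(\bm x(\sigma))\,d\sigma$ is $C^1$ with derivative $\bm g(\bm x(t))-\bm g(\bm x(t-\tau_1))$. So \eqref{eq:basic-sys} is in fact a retarded equation, and the RFDE principle of linearized stability applies using only the $o(\|\varphi\|)$ remainders supplied by \ref{c2}; no hypothesis on $D$ is needed. Correspondingly, $\lambda\Delta_j$ is a retarded quasi-polynomial. On $\mathrm{Re}\,\lambda\ge-\epsilon$ the crude bound $|\Delta_j(\lambda,\alpha)|\ge|\lambda|-a_j-\gamma(1+e^{\epsilon\tau_1})-\alpha b(1+e^{\epsilon\tau_2})/|\lambda|$ confines all roots there to a fixed compact set, uniformly for $\alpha$ in compact sets. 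Your Step 1 and the Riemann--Lebesgue argument are therefore unnecessary.
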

\begin{theorem}\label{thm:mas-local}
If $\beta_0>0$ satisfies
\[
\beta_0 = \gamma\sin(\beta_0\tau_1) + \frac{(\gamma(\cos(\beta_0\tau_1)-1)+a_j)(\cos(\beta_0\tau_2)-1)}{\sin(\beta_0\tau_2)}
\]
and $\alpha_0>0$ satisfies
\[
\alpha_0 = \frac{-\gamma\beta_0(\cos(\beta_0\tau_1)-1)-\beta_0 a_j}{b\sin(\beta_0\tau_2)}
\]
for some $j$, and if a transversality condition (cf. \ref{eq:pq}) is satisfied at $\alpha_0,\beta_0$, then there exists a connected global branch of non-constant periodic solutions bifurcating from the branch of trivial solutions at $\alpha=\alpha_0$.
\vs
\end{theorem}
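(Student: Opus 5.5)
The plan is to recast \eqref{eq:basic-sys} as a $\Gamma\times S^1$-equivariant fixed point problem on a space of $2\pi$-periodic functions and then apply the equivariant-degree global Hopf bifurcation theorem; the two equations in the statement will turn out to be exactly the condition that the characteristic quasi-polynomial has a purely imaginary root $i\beta_0$ at $\alpha=\alpha_0$.

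\textbf{Step 1: linearization and characteristic equation.} Linearize at $\bm x\equiv 0$ using \ref{c2}:
\[
\frac{d}{dt}\Big[\bm x-\gamma\int_0^{\tau_1}\bm x(t-s)\,ds\Big]=-a\bm x-\alpha b\int_0^{\tau_2}\bm x(t-s)\,ds-C\bm x .
\]
Since $C$ is symmetric and $\Gamma$-equivariant, decompose $V=\bigoplus_j V_j$ into $\Gamma_0$-isotypic components (refined by eigenspaces of $C$). On each component $C$ acts as a scalar $c_j$; set $a_j:=a+c_j$. Substituting $\bm x=e^{\lambda t}v$ with $v\in V_j$ and using $\int_0^\tau e^{-\lambda s}ds=(1-e^{-\lambda\tau})/\lambda$ gives
\[
\Delta_j(\lambda,\alpha)=\lambda-\gamma(1-e^{-\lambda\tau_1})+a_j+\frac{\alpha b}{\lambda}\big(1-e^{-\lambda\tau_2}\big)=0 .
\]
Put $\lambda=i\beta$ and separate real and imaginary parts. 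The imaginary part is linear in $\alpha$ and yields the stated formula for $\alpha_0$. Substituting this into the real part and dividing by $\sin(\beta\tau_2)$ gives the stated equation for $\beta_0$; this is a routine trigonometric check. So the hypotheses say precisely that $\pm i\beta_0$ are characteristic roots on $V_j$ at $\alpha_0$. The transversality condition \eqref{eq:pq} is what makes the crossing isolated, with nonzero crossing number.

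\textbf{Step 2: operator reformulation.} Rescale time to period $2\pi$ by $t\mapsto\beta t$ and treat $(\alpha,\beta)$ as parameters. Write the system on $W:=H^1(S^1;V)$, or on $C_{2\pi}(\bbR;V)$, with the $\Gamma\times S^1$ action (spatial action combined with time shift). The neutral term is handled by a standard inversion: define
\[
\mathcal D\bm x:=\bm x-\int_0^{\tau_1}\bm g(\bm x(\cdot-s))\,ds .
\]
By \ref{c3} and $\gamma\tau_1$-type smallness, namely $\kappa\tau_1<1$ (this is assumed or follows from the setup; I will need to check it), $\mathcal D$ is invertible with a Lipschitz inverse. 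The equation then becomes $\bm x=\mathcal D^{-1}\big(L^{-1}[\cdots]\big)$, or equivalently $\bm x-F(\alpha,\beta,\bm x)=0$ with $F$ completely continuous and equivariant. Constant solutions are eliminated by the usual $L=d/dt+\mathrm{Id}$ trick with a compact resolvent, and $\bm x=0$ is the trivial branch.

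\textbf{Step 3: degree computation.} Take a small isolating cylinder around $(\alpha_0,\beta_0,0)$ and an auxiliary function that is negative on the trivial branch. Compute the local bifurcation invariant
\[
\omega(\alpha_0)=\deg_{\Gamma}(\text{linear part at }\alpha_0^-)\cdot\Big(\textstyle\sum_{k}\mathfrak n_{j,k}\,\deg_{\mathcal V_{j,k}}\Big),
\]
where $\mathfrak n_{j,k}$ is the crossing number of the root $ik\beta_0$ through the imaginary axis, obtained from the sign of $\partial_\alpha\operatorname{Re}\lambda$, i.e. from \eqref{eq:pq}. The basic degrees of the irreducible $\Gamma\times S^1$-representations are nonzero, so $\omega\ne0$ as soon as the crossing number is nonzero.

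\textbf{Step 4: conclusion and main obstacle.} Nontriviality of $\omega(\alpha_0)$ gives, by the global equivariant Hopf theorem (a Rabinowitz alternative), a connected global branch of nonconstant periodic solutions emanating from $(\alpha_0,\beta_0)$, with isotropy given by the orbit types detected. I expect the main obstacle to be Step 2: showing that the neutral operator is well behaved enough for the fixed point map to be a compact (or condensing) perturbation of the identity, so that the equivariant degree is defined. I would first try the $\kappa$-Lipschitz inversion of $\mathcal D$ via Banach's fixed point theorem, using \ref{c3} and $\gamma<1$. If compactness fails, I would fall back on a condensing-map version of the degree.
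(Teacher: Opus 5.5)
Your outline follows the paper's route: the isotypic characteristic quasi-polynomial, period normalization, the $(L+\bm j)^{-1}$ fixed-point reformulation, an auxiliary function $\eta$, the crossing-number formula for $\omega_G$ with signs from \eqref{eq:pq}, and the equivariant Rabinowitz alternative. However, the conclusion that the branch consists of \emph{non-constant} periodic solutions is not justified. Your claim that constant solutions are ``eliminated by the usual $L=d/dt+\mathrm{Id}$ trick'' is false. Adding $\bm j$ to $L$ only makes the linear operator invertible, so that the equation can be written as $\mathscr F=0$; it removes nothing from the zero set. Any equilibrium $\bm x\equiv c\neq 0$ with $ac+\alpha\bm f(\tau_2 c)+\bm h(c)=0$ is a zero of $\mathscr F(\alpha,\beta,\cdot)$ for \emph{every} $\beta>0$. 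Moreover, wherever $\alpha b\tau_2+a_j=0$, the $k=0$ block $\mathscr A_{0,j}$ degenerates and steady-state branches may leave the trivial solution; this is not excluded here, since the theorem imposes no sign on $b$ or $a_j$. The continuum produced by the Rabinowitz alternative is a component of $\overline{\mathscr S}$ in the full space, and nothing in your argument keeps it away from these constant solutions. Once it touches one, say $(\alpha_1,\beta_1,c)$, it contains the whole connected family $\{(\alpha_1,\beta,c):\beta>0\}$. Invertibility of $\mathscr A_0(\alpha_0,\beta_0)$ gives non-constancy only near $(\alpha_0,\beta_0,0)$.

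The missing idea is the paper's fixed-point reduction. Restrict $\mathscr F$ to $\mathscr E^{\bm K}$ with $\bm K=\langle(\bm e,-1,e^{i\pi/\kappa})\rangle$, i.e.\ to functions satisfying $u(t+\pi/\kappa)=-u(t)$. This is where the $\mathbb Z_2$-part of \ref{c1} (oddness of $\bm f,\bm g,\bm h$) is essential; your proposal carries it along but never exploits it. Oddness is what makes $\mathscr F$ map $\mathscr E^{\bm K}$ into itself. In $\mathscr E^{\bm K}$ only the modes $k=(2l-1)\kappa$ survive, so the $k=0$ component and all nonzero constants disappear, and every nontrivial zero is a non-constant periodic solution. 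Since $\bm K$ is normal, the reduced problem is $G/\bm K$-equivariant, and with $\kappa=1$ the first-mode crossing on $V_j$ still contributes to the reduced invariant, to which the global theorem is applied. Equivalently, you could work in $\mathscr E^H$ for a detected $(H)\in\mathfrak M_j$, since every isotropy group of a nonzero element of $\mathscr E_{1,j}$ contains the central element $(\bm e,-1,e^{i\pi})$. Some such restriction is indispensable.

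By contrast, Step 2 is not the real obstacle. The bound $\kappa\tau_1<1$ you would need to invert $\mathcal D$ is not a hypothesis of the theorem (only $\kappa<1$ is). The paper never inverts $\mathcal D$; it treats $u-N_{\bm k}(u)-(L+\bm j)^{-1}(\cdots)$ directly as a condensing field, which is your fallback. In fact, since $\frac{d}{dt}\int_0^{\tau_1}\bm g(\bm x(t-s))\,ds=\bm g(\bm x(t))-\bm g(\bm x(t-\tau_1))$, the operator $N_{\bm k}$ is even completely continuous on $C^1_{2\pi}$ by Arzel\`a--Ascoli.
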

\begin{theorem}\label{thm:mas-global}
If the conditions of Theorem \ref{thm:mas-local} hold and additionally $|\gamma(\tau_1-\tau_2)|<|\tau_2(\gamma-a_j)-2|$, then the bifurcating branch is unbounded.
\end{theorem}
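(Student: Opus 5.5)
Theorem \ref{thm:mas-global} follows the standard Rabinowitz-type alternative for equivariant Hopf bifurcation. Reformulate \eqref{eq:basic-sys} as a $\Gamma\times S^1$-equivariant fixed point problem $\mathscr F(\alpha,\beta,\bm x)=\bm x$ on the space of $2\pi$-periodic functions (after rescaling time by the frequency $\beta$). The branch $\mathscr C$ produced in Theorem \ref{thm:mas-local} then satisfies the following alternative: either $\mathscr C$ is unbounded, or $\mathscr C$ is compact and the sum of the local equivariant bifurcation invariants over all bifurcation points $(\alpha_k,\beta_k)$ in $\mathscr C$ equals zero. We therefore argue by contradiction: assume $\mathscr C$ is bounded, and show that the only critical point it can meet is the one at $(\alpha_0,\beta_0)$, for the given $j$. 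Its invariant is nonzero by the transversality assumption, because the degree computation in Theorem \ref{thm:mas-local} shows the local invariant is a nonzero multiple of a basic degree. So the sum cannot vanish, which is a contradiction.

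\textbf{Step 1: reduce to the characteristic equation.} Decompose $V=\bigoplus_j V_j$ into $\Gamma$-isotypic components, where $C$ acts on $V_j$ as the scalar $c_j$ and $a_j$ denotes the resulting effective coefficient. Linearizing at $\bm x=0$ with $x=e^{\lambda t}v$, $v\in V_j$, gives the characteristic equation
\[
\lambda\Bigl(1-\gamma\frac{1-e^{-\lambda\tau_1}}{\lambda}\Bigr)+a_j+\alpha b\,\frac{1-e^{-\lambda\tau_2}}{\lambda}=0 .
\]
With $\lambda=i\beta$, the real and imaginary parts are exactly the two equations of Theorem \ref{thm:mas-local}.

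\textbf{Step 2: isolate the center.} Show that, within the connected set $\mathscr C$, the critical parameter pair cannot jump to another purely imaginary root or to another $j$. The tool is a homotopy or continuity argument on the frequency $\beta$ along $\mathscr C$: the period of solutions on $\mathscr C$ varies continuously, and period blow-up or collapse is excluded by a priori bounds. These bounds come from \ref{c3}: the neutral operator $\bm x\mapsto \bm x-\int_0^{\tau_1}\bm g(\bm x(t-s))\,ds$ is invertible when $\kappa\tau_1<1$, which holds in the regime considered.

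\textbf{Step 3: the key inequality and the main obstacle.} The main obstacle is excluding the other centers, and this is where the hypothesis $|\gamma(\tau_1-\tau_2)|<|\tau_2(\gamma-a_j)-2|$ enters. I would expand the characteristic function near $\lambda=0$ and along the imaginary axis, using $\frac{1-e^{-i\beta\tau}}{i\beta}=\tau\,\mathrm{sinc}(\beta\tau/2)e^{-i\beta\tau/2}$. Eliminating $\alpha$ from the two equations gives a scalar equation $\Phi_j(\beta)=0$. Its derivative in $\beta$ (equivalently the winding of the characteristic function along a suitable contour) has leading coefficient proportional to $\tau_2(\gamma-a_j)-2\pm\gamma(\tau_1-\tau_2)$, using $\gamma\tau_1$-type terms. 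The hypothesis forces this coefficient to have a fixed sign. Hence:
\begin{itemize}
\item $\Phi_j$ is strictly monotone, so it has the single root $\beta_0$ in the relevant range;
\item all crossings counted with multiplicity in the $V_j$-component contribute with the same sign rather than cancelling.
\end{itemize}
Consequently the total sum of local invariants over any bounded $\mathscr C$ equals a nonzero multiple of $\deg_{\mathcal V_j}$, which contradicts the Rabinowitz alternative. Carrying out this sign or monotonicity computation cleanly is what I expect to be the hardest part; if exact monotonicity fails, I would fall back on a Rouché-type argument counting roots in the right half-plane on both sides of $\alpha_0$.
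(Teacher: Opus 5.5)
There is a genuine gap. Your argument depends on showing that a bounded branch can meet only the one center $(\alpha_0,\beta_0,0)$, and neither Step~2 nor Step~3 establishes this.

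In Step~2, continuity of $\beta$ along $\mathscr C$ and a priori period bounds do not prevent a connected branch from running from $(\alpha_0,\beta_0,0)$ to another center $(\alpha_{n,j},\beta_{n,j},0)$, or to a $k$-resonant one. That possibility is exactly why the Rabinowitz alternative sums the invariants over several centers, and nothing in the hypotheses excludes it. Also, invertibility of $\bm x\mapsto\bm x-\int_0^{\tau_1}\bm g(\bm x(t-s))\,ds$ via $\kappa\tau_1<1$ is not among the assumptions; \ref{c3} gives only $\kappa<1$.

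In Step~3 the first bullet is false. Eliminating $\alpha$ yields $\Phi_j(\beta):=\beta-\phi_j(\beta)=0$, i.e.\ \eqref{eq:beta-relation}. Here $\phi_j(\beta)=\gamma\sin(\beta\tau_1)-\big(\gamma(\cos\beta\tau_1-1)+a_j\big)\tan(\beta\tau_2/2)$ has poles at (generically every) odd multiple of $\pi/\tau_2$. So the equation has infinitely many roots $\beta_{1,j}<\beta_{2,j}<\cdots$: this is the sequence \eqref{eq:coincidence} (see Figure~\ref{fig:beta-relation} and the three roots per component listed in Section~\ref{sec:example}). A sign condition on $\Phi_j'$ can at most give one root between consecutive poles, never a unique root. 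So the hypothesis cannot play the role you assign to it.

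The paper never tries to confine the branch to one center; it uses the inequality as a \emph{sign} condition on crossing directions. At a center, \eqref{eq:alpha-relation} gives exactly $q=\gamma\sin(\beta\tau_1)+\beta\,[\gamma(\tau_1-\tau_2)\cos(\beta\tau_1)+\tau_2(\gamma-a_j)-2]$, and the hypothesis keeps the bracket of one sign. Proposition~\ref{prop:crossing-num} uses this to argue that the transversality quantity \eqref{eq:pq} has constant sign at all sufficiently large centers $(\alpha_{n,j},\beta_{n,j})$. Lemma~\ref{lem:sign-crossing-num} turns this into equal crossing numbers $\mathfrak t_{1,j}(\alpha_{n,j},\beta_{n,j})$. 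Consequently, for a maximal orbit type $(H)\in\mathfrak M_j$, the coefficients $\mathrm{coeff}^H(\omega_G(\alpha_{n,j},\beta_{n,j}))$ share a sign and cannot cancel in the finite sum that the Rabinowitz alternative forces to vanish for a bounded branch. The fixed-point reduction to $\mathscr E^{\bm K}$ ensures the branch consists of non-constant solutions.

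Your $\Phi_j'$ computation is still useful: at a center one has $\mathrm{sign}\,\mathrm{Re}\,\lambda'(\alpha)=-\mathrm{sign}\big(\Phi_j'(\beta)\,b\sin(\beta\tau_2)\big)$, so it feeds exactly this same-sign argument, which is your second bullet. To repair the proof:
\begin{itemize}
\item make the same-sign argument the whole proof;
\item track the coefficient of a fixed $(H)$, rather than claiming the total is a multiple of $\deg_{\mathcal V_j}$, since the branch may also pass through centers on other isotypic components;
\item drop the claim that only one center is met.
\end{itemize}
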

\begin{theorem}\label{thm:mas-sym}
If the conditions of Theorem \ref{thm:mas-local} hold, and $H<S^1\times \Gamma_0\times \mathbb Z_2$ is a maximal isotropy group of some $u(t) \in \mathscr E_{1,j}:=\{c\cos(t)+d\sin(t):c,d\in E(\mu_j)\}$
satisfying $\dim W(H)=1$, where $W(H):=N(H)/H$ denotes the Weyl group of H and $E(\mu_j)\subseteq \mathbb R^n$ denotes the eigenspace of $\mu_j$, (i.e. if $(H) \in \mathfrak M_j$ as defined in Section \ref{sec:prelim-orbittypes}) then there exists a connected branch of non-constant periodic solutions with symmetries at least $(H)$ bifurcating from the trivial solution at $\alpha=\alpha_0$.
Moreover, if the conditions of Theorem \ref{thm:mas-global} are satisfied, then this branch is unbounded.
\end{theorem}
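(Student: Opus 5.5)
The plan is the standard equivariant-degree route. First normalize the period: set $\beta = 2\pi/p$ and $u(t) = \bm x(t/\beta)$, so that $2\pi$-periodic solutions of the rescaled system correspond to $p$-periodic solutions of \eqref{eq:basic-sys}. Then work in the Sobolev-type space $\mathscr W := H^1_{2\pi}(\mathbb R;V)$, which carries the isometric $G := O(2)\times\Gamma_0\times\mathbb Z_2$ action: $S^1$ shifts time, the reflection reverses time, and $\Gamma$ acts as in \ref{c1}.

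Using \ref{c3}, write the neutral operator $u\mapsto u - \int_0^{\tau_1}\bm g(u(t-s))\,ds$ (suitably rescaled) as $\mathrm{Id}-\mathcal G$. After lifting to the appropriate space, $\mathcal G$ is a contraction: its Lipschitz constant is $\le \kappa\tau_1$ in the rescaled setting, or we use $\gamma\tau_1$-type bounds. Hence $\mathrm{Id}-\mathcal G$ is invertible. This lets us reformulate the problem as a $G$-equivariant fixed-point equation
\[
u = \mathscr F(\alpha,\beta,u),
\]
with $\mathscr F$ a completely continuous condensing perturbation of the identity. This is the same operator the paper builds for Theorems \ref{thm:mas-local} and \ref{thm:mas-global}, and I will assume it. The trivial branch is $\{(\alpha,\beta,0)\}$.

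Next, decompose $\mathscr W$ into $G$-isotypic components. Fourier modes $k\ge 0$ and the eigenspaces $E(\mu_j)$ of $C$ decompose further into irreducible $\Gamma$-representations $\mathcal V_{k,j,l}$. The critical points $(\alpha_0,\beta_0)$ are exactly where the characteristic quasi-polynomial
\[
\lambda\bigl(1-\gamma\tfrac{1-e^{-\lambda\tau_1}}{\lambda}\bigr)+a_j+\alpha b\,\tfrac{1-e^{-\lambda\tau_2}}{\lambda}=0
\]
has a root $i\beta_0$; these are the coincidence equations in Theorem \ref{thm:mas-local}. Set up the standard isolating cylinder around $(\alpha_0,\beta_0,0)$ with the auxiliary (Ize) function. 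The local bifurcation invariant is
\[
\omega_G(\alpha_0,\beta_0) = \sum \deg_{\mathcal V_{1,j,l}}\cdot\bigl(\text{crossing number}\bigr),
\]
a product of basic degrees $\deg_{\mathcal V_{0}}$, the $G$-equivariant degree of the linearization at $\beta$ fixed, which is constant near $\alpha_0$ away from crossing, together with $\mathfrak t_{1,j}\deg_{\mathcal V_{1,j}}$. The crossing number $\mathfrak t_{1,j}=\pm1$ is given precisely by the sign of the transversality condition (\ref{eq:pq}). By the global Hopf bifurcation theorem for twisted equivariant degree, if the coefficient of $(H)$ in $\omega_G(\alpha_0,\beta_0)$ is nonzero, there is a connected branch of nonconstant solutions with isotropy at least $(H)$ emanating from $(\alpha_0,\beta_0,0)$. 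That branch either is unbounded or returns to another bifurcation point, with the sum of invariants over the returned points equal to zero.

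The key step, and the main obstacle, is to show that $\operatorname{coeff}_{(H)}\omega_G(\alpha_0,\beta_0)\neq 0$ for every $(H)\in\mathfrak M_j$. The plan is to use that $H$ is a maximal twisted orbit type in $\mathscr E_{1,j}$ with $\dim W(H)=1$. For such $H$, the basic degree $\deg_{\mathcal V_{1,j,l}}$ has coefficient $\pm1$ at $(H)$; this is the standard maximality result, proved via the recurrence formula, since $(H)$ is maximal in $\mathcal V_{1,j,l}\setminus\{0\}$. Multiplying by $\deg_{\mathcal V_0}$, which is invertible in the Burnside ring $A(O(2)\times\Gamma)$, preserves a nonzero coefficient at maximal $(H)$: we multiply by the $\pm$ unit and use maximality to rule out cancellation from other products. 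Also $\mathfrak t_{1,j}\neq0$ by transversality. Hence the coefficient is $\pm1$.

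Finally, for unboundedness: under the hypothesis of Theorem \ref{thm:mas-global}, the proof there shows that the branch cannot return to the trivial set. This holds either because there are no other critical points in the relevant region, or because the $\alpha$- or $\beta$-coordinates are forced to escape; equivalently, the alternative of the global theorem leaves only unboundedness. The same argument applies verbatim to the $(H)$-component, since the $H$-fixed-point subspace is invariant and the branch found lies in it. This gives the unbounded branch with symmetry at least $(H)$.
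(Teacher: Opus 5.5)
Your local argument essentially follows the paper's route: period normalization, the condensing field $\mathscr F$, the invariant $\omega_G(\alpha_0,\beta_0)=\Gamma\text{-deg}(\mathscr A_0,B(V))\cdot\sum_{k,j}\mathfrak t_{k,j}\deg_{\mathcal V^-_{k,j}}$, crossing numbers from transversality via Lemma~\ref{lem:sign-crossing-num}, and a nonzero $(H)$-coefficient at a maximal orbit type. The unboundedness part, however, has a genuine gap.

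Your claim that, under the hypothesis of Theorem~\ref{thm:mas-global}, the branch ``cannot return to the trivial set'' has no support. The critical set $\Lambda$ is infinite with $\beta_{n,j}\to\infty$, and that hypothesis says nothing about where critical points lie. The Rabinowitz alternative only gives that a bounded branch in $\overline{\mathscr S^H}$ meets finitely many critical points $(\alpha_i,\beta_i,0)$ with $\sum_i\text{coeff}^H(\omega_G(\alpha_i,\beta_i))=0$. The missing idea is to contradict this by controlling the signs of these coefficients, and that is exactly the role of the extra hypothesis, which your sketch never uses. By Lemma~\ref{lem:sign-crossing-num}, $\mathfrak t_{1,j}=-\sign(\partial_\alpha P_j)\,m_j$. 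Proposition~\ref{prop:crossing-num} shows that $|\gamma(\tau_1-\tau_2)|<|\tau_2(\gamma-a_j)-2|$ makes these signs eventually constant, so $\sum_n\sign(\partial_\alpha P_j(\alpha_{n,j},i\beta_{n,j}))\neq0$. The paper feeds this into its global theorem, whose hypothesis is $\sum_{\tilde\Lambda^{\bm K}_k}\text{coeff}^H(\omega_G)\neq0$. For the same reason the step is not ``verbatim'' for $(H)$: you must also account for critical points on other components $\mathcal V^-_{1,i}$ whose basic degrees contain $(H)$.

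Two further corrections.

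\textbf{Symmetry group.} The system is not $O(2)$-equivariant. Reversing time turns the distributed delays $\int_0^{\tau}u(t-\beta s)\,ds$ into advances, and it changes the sign of $\dot u$ while leaving $-au$ unchanged. So only $G=S^1\times\Gamma_0\times\mathbb Z_2$ acts. That is the group in which the statement's $(H)$ is taken, and the bookkeeping must be done in $A_1^t(G)$ as an $A(\Gamma)$-module.

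\textbf{Non-constancy.} You assert without argument that the branch consists of non-constant solutions. The paper obtains this from the fixed-point reduction to $\mathscr E^{\bm K}$, with $\bm K=\langle(\bm e,-1,e^{i\pi/\kappa})\rangle$. In your setting it follows directly. The element $(\bm e,-1,e^{i\pi})$ acts trivially on every $\mathscr E_{1,j}$: the half-period shift negates first harmonics, and the antipodal $\mathbb Z_2$ negates them back. Hence it lies in every isotropy group $H$ of an element of $\mathscr E_{1,j}$. Every $u\in\mathscr E^H$ then satisfies $u(t+\pi)=-u(t)$, so $u$ cannot be a nonzero constant.
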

\vs
Hopf bifurcation is an established framework for studying consensus in MAS with time delays \cite{chen2014multiconsensus,xie2015second,wang2021global,wang2024distribution}, but classical Hopf bifurcation methods cannot show the unbounded continuation of global branches of solutions, and are not well-suited to classifying the solutions of such systems according to their symmetry type, particularly when a high degree of symmetry in the network topology forces high-dimensional center manifolds. Equivariant degree methods elegantly address these issues, and while they have not been applied to specifically study MAS systems before, they have been used to study Hopf bifurcation in related DDE and NFDE contexts \cite{balanov2007transmission,balanov2014infde,crane2026,duan2024,ghanem2024}. Accordingly, by viewing the closed-loop dynamics equation \eqref{eq:basic-sys} as an abstract NFDE, we can extend state-of-the-art equivariant degree methods used to study global bifurcation in other NFDEs to the study of this class of multi-agent systems.
\vs
The advantage of equivariant degree methods in this context is twofold: First, it allows us to sidestep many of the difficulties or highly conservative assumptions (for example, analyticity or regularity) required by other methods when dealing with NFDEs \cite{krawcewicz1997theory}. Secondly, and more importantly, it allows us to discuss the bifurcation and unbounded continuation of symmetric global branches of periodic solutions (i.e. possible periodic multiconsensus states) and classify these solutions according to their symmetries. 

\vs
The use of topological degree techniques to study local bifurcation was pioneered by M.A. Krasnosel'skii \cite{krasnoselskii1956} in the 1950s. Rabinowitz extended this to the foundational global bifurcation result in 1985 \cite{rabinowitz1971,rabinowitz1985}. Both of these results used the Leray-Schauder degree, and were extended to the equivariant Leray-Schauder degree by Krawcewicz et al. in \cite{krawcewicz1997theory}. On the other hand, following Darbo's introduction of condensing maps in 1955 \cite{darbo1955}, Nussbaum and Sadovskii independently developed an extension of the Leray-Schauder degree to condensing maps in the 1970s \cite{nussbaum1972,sadovskii1971}. These ideas were combined in the creation of the composite coincidence degree by Erbe, Krawcewicz, and Wu in 1993 \cite{erbekrawcewiczwu1993}, which was directly applied to study boundary problems in neutral equations. This was extended to the symmetric case by Krawcewicz and Wu in 1997 \cite{krawcewicz1997theory}. 
\vs
The study of periodic solutions via degree theory received a major impetus from the development of $S^1$-equivariant degree. 
This degree was developed independently and simultaneously by the research group of Dylawerski, Gęba, Jodel, and Marzantowicz \cite{dylawerski1991s1}, and by the group of Ize, Massabò, and Vignoli \cite{ize1989equivariant,ize1992degree,izevignoli2003}. The use of this degree to study Hopf bifurcation required an extension of the $S^1$ degree which could detect spatio-temporal symmetries. Gęba, Krawcewicz, and Wu \cite{geba1994equivariant} introduced the \emph{twisted} equivariant degree. This tool was further developed by Balanov and Krawcewicz \cite{balanov2008symmetric} and by Balanov, Krawcewicz, and Steinlein \cite{balanov2006applied}; a comprehensive treatment of equivariant degree theory can be found in \cite{krawcewicz1997theory,izevignoli2003}.
\vs
These lines of research culminate in the use of the degree to study global Hopf bifurcation in neutral systems. First used by Krawcewicz, Wu, and Xia in 1993 \cite{krawcewiczwuxia1993,xia1994} in the study of lossless transmission lines, these techniques have since been applied to other problems. In 2014, Balanov et al. \cite{balanov2014infde} generalized this approach to a larger class of implicit NFDEs. In 2026, Chen et al. \cite{crane2026} applied similar techniques to study a symmetric system of distributed delay differential equations with an application in anti-windup PID control for autonomous drone swarms, and \eqref{eq:basic-sys} could also be viewed as an extension of the model studied in \cite{crane2026} by adding a distributed delay of neutral type.
\vs
One notable compromise that equivariant degree techniques must make in order to give such rich symmetric bifurcation results is a lack of information on the stability of solutions, as well as their minimal period. A periodic solution of the closed loop dynamics can only be called multiconsensus if it is at least locally attractive \cite{chen2014multiconsensus}. For the reference consensus, we address this shortcoming by obtaining sufficient conditions on the system parameters to show its local asymptotic stability. For the global branches of non-constant periodic solutions, we advocate a technique of symmetrically informed numerical analysis. By leveraging the information given to us by the degree, in particular the symmetries, limit frequencies, and critical parameter values of bifurcating periodic solutions, we initialize numerical simulations with a perturbation contained in a suitable fixed point subspace corresponding to the detected symmetry of the branch. By studying the evolution of this solution numerically, we can infer the stability of the periodic solution as well as its minimal period, and thus the existence of multiconsensus.

The paper will be structured as follows: In Section \ref{sec:introduction:overview}, we will lay out some preliminaries regarding NFDEs and the equivariant twisted Nussbaum-Sadovskii degree, and outline the underlying method in a general NFDE setting.
 
In Section \ref{sec:crit}, we will establish conditions on the system parameters to guarantee global bifurcation of connected branches of solutions and their unboundedness. In Section \ref{sec:funcspace}, we will reformulate the system as a fixed point equation and show that it is a condensing perturbation of identity. In Section \ref{sec:two-parameter}, we will define the required topological invariants and show how they can be calculated. In Section \ref{sec:fixed-point-reduction}, we will complete the proof of the main result by showing how degeneracies can be resolved by restricting the problem to an appropriate fixed point space. 
\vs
This will complete the first part of the paper and conclude the abstract results. In Section \ref{sec:example}, we will construct an illustrative example of a MAS of type \eqref{eq:basic-sys}, modeling coupled asset markets where traders use a combination of price fundamental strategies and momentum trading strategies, and show the bifurcation of the consensus state to symmetric periodic multiconsensus. All of the above theorems will be applied, and we will provide numerical simulations which not only support the theoretical predictions, but also demonstrate how numerical methods can work in concert with the wealth of information provided by equivariant degree methods.
We will finish with some concluding remarks in Section \ref{sec:conclusion}, and also provide some appendices defining condensing maps and outlining the construction of the Nussbaum-Sadovskii degree from the Leray-Schauder degree, defining the amalgamated notation used to describe orbit types, and providing further details and discussion on the numerical results and how they were obtained.
\vs

\subsection{Preliminaries}\label{sec:introduction:overview}
The degree we will use to analyze system \eqref{eq:basic-sys} is the equivariant twisted Nussbaum-Sadovskii degree (ETNS degree). To use this degree, the system of interest must first be period normalized, reformulated as an operator equation between functional spaces, and then linearized. To streamline further exposition and simplify later notation, we will first show this process for a general NFDE system. Following this, we will also give some essential definitions regarding the representation theory of the relevant groups on this functional space and its $G$-isotypic decomposition, and the twisted orbit types used to classify spatio-temporal symmetries of solutions.

\vs

\subsubsection{NFDE preliminaries and reformulation as a condensing perturbation of identity}
Take the system 
\begin{equation}\label{eq:fde2}
\frac d{dt}\big[x(t)- \bm k (x_t)\big]=\bm f(\alpha,x_t), \quad x(t)\in \br^n,
\end{equation}
and, for a fixed number $r>0$, consider the Banach space $\bm C_r:=C([-r,0];\br^n)$ equipped with the standard sup-norm, assume that $\bm f(\alpha,x):\bbR \times \bm C_r \to \br^n$ is a continuous map and $\bm k:\bm C_r\to \br^n$ is continuous, and take $\alpha$ as the bifurcation parameter. Such a system is called a functional differential equation of neutral type, or an NFDE for short.
\vs
We further assume that $\bm f$ is differentiable at $0$ with respect to $x$ (and $D_x \bm f(\alpha,0)$ is continuous with respect to $\alpha$), that $\bm k$ is continuously differentiable and $\kappa$-Lipschitzian with $\kappa <1$, and that $\norm{D_x\bm k}<1$ for all $x \in \bm C_r$. Finally, we assume without loss of generality that $x\equiv0$ is a trivial solution to \eqref{eq:fde2}, i.e. $\bm k(0) = 0$ and $\bm f(\alpha,0)=0$.
\vs
We wish to reformulate \eqref{eq:fde2} as a fixed point equation on a functional space. We first perform period normalization, using the substitution $x(t) = u(\beta t)$ where $\beta:=\frac{2\pi}{p}$ and $p>0$ is a known period of $x(t)$, to obtain
\begin{equation}\label{eq:fde-norm}
    \frac d{dt}\big[u(t) - \bm k(u_{t,\beta})\big] = \frac{1}{\beta}\bm f(\alpha,u_{t,\beta}), \;\text{ and } u_{t,\beta}(s) := u(t+\beta s)
\end{equation}
Put $\mathscr{E} := C^1_{2\pi}(\bbR;\bbR^n)$ and $||\phi|| =\max\{||\phi||_\infty,||\dot{\phi}||_\infty\}$,
where $||\phi||_\infty$ denotes the usual supremum norm on $\phi$. Let $\bm j: \mathscr{E} \to C_{2\pi}(\bbR;\bbR^n)$ denote the natural embedding of $\mathscr{E}$ into $C_{2\pi}(\bbR;\bbR^n)$. Let $\bbR_+^2 := \bbR \times \bbR_+$, where $\bbR_+ := \beta>0$, and put 
\[
\mathscr{D}:= \{(\alpha,\beta,u)\in \bbR_+^2 \times C^1_{2\pi}(\bbR;\bbR^n): u+N_{\bm{k}}(u) \in C^1_{2\pi}(\bbR;\bbR^n)\}
\]
where $N_{\bm{k}}:\bbR^2_+ \times C_{2\pi}(\bbR;\bbR^n) \to C_{2\pi}(\bbR;\bbR^n)$, and $N_{\bm{k}}(\alpha,\beta,u)(t) := \bm k(u_{t,\beta})$. 

Notice that $C_{2\pi}^1(\bbR;\bbR^n) \mathop{\hookrightarrow}\limits^{\bm j}C_{2\pi}(\bbR;\bbR^n)$ is a compact embedding, and put 
\[
N_{\bm f}(\alpha,\beta,u)(t) := \frac{1}{\beta} \bm f(\alpha,u_{t,\beta}).
\]
Define $L:\mathscr D \subset \mathscr E \to C_{2\pi}(\bbR;\bbR^n)$ as $(Lu)(t) := \dot u(t)$. Then the equation \eqref{eq:fde-norm} can be written as
\begin{equation}\label{eq:fde-func1}
L(u-N_{\bm k}(\alpha,\beta,\bm j(u))) = N_{\bm f}(\alpha,\beta,u),
\end{equation}
where $N_{\bm f}(\alpha,\beta,u) \in \mathscr D$. From \eqref{eq:fde-func1}, we obtain
\begin{align*}
L(u-N_{\bm k}(\alpha,\beta,u)))+\bm j(u) - N_{\bm k}(\alpha,\beta,\bm j(u))&= N_{\bm f}(\alpha,\beta,u) + \bm j(u) - N_{\bm k}(\alpha,\beta,\bm j(u))\\
(L + \bm j)(u-N_{\bm k}(\alpha,\beta,u))&=N_{\bm f}(\alpha,\beta,u) + \bm j(u) - N_{\bm k}(\alpha,\beta,\bm j(u)),
\end{align*}
from which we obtain the fixed point operator equation
\begin{equation}\label{eq:fde-fp-op-eq}
\mathscr F(\alpha,\beta,u) = u - N_{\bm k}(\alpha,\beta,u) - (L + \bm j)^{-1}\Big(N_{\bm f}(\alpha,\beta,u) + j(u) - N_{\bm k}(\alpha,\beta,\bm j(u))\Big)
\end{equation}
Clearly, $\mathscr F$ is a Darbo operator, i.e. it is a sum of a completely continuous and a contractive function, and therefore $\mathscr F$ is a condensing perturbation of identity (cf. Appendix \ref{appendix:n-s}). Clearly, $(\alpha,\beta,u)\in\bbR_+^2\times \mathscr E$ satisfying $\mathscr F(\alpha,\beta,u) = 0$ are solutions to \eqref{eq:fde2}. This allows us to view \eqref{eq:fde2} as a two-parameter bifurcation problem which can be analyzed using the ETNS degree.
\vs

\subsubsection{Symmetry groups, irreducible representations, and $G$-isotypic decomposition of $\mathscr E$}
Now we will introduce some standard preliminaries regarding the structure of irreducible $G$-representations, $G$-isotypic components, and how $G$ acts on the functional space $\mathscr E$. Let $\Gamma_0$ be a finite group representing the coupling symmetries of $\eqref{eq:fde2}$, and put $\Gamma:=\Gamma_0\times\bbZ_2$. Suppose $V:=\bbR^n$ is an orthogonal $\Gamma$-representation. If $\bm f,\bm k$ are $\Gamma$-equivariant, then $\mathscr F$ is $G$-equivariant, where $G := S^1 \times \Gamma$, and $S^1$ acts on the period-normalized equation by phase shifting on Fourier modes.
\vs
V has a $\Gamma$-isotypic decomposition given by
\[
V = V_0 \oplus V_1 \oplus \dots \oplus V_r
\]
where $V_i$ is the (real) $\Gamma$-isotypic component modeled on the irreducible $\Gamma$-representation $\cV_i^-$, which is the irreducible $\Gamma_0$-representation $\cV_i$ with the antipodal $\bbZ_2$-action. Clearly, if $\cV_i$ is an irreducible $\Gamma_0$-representation of \emph{real type}, then
\[
\cV_i^C := \mathbb C \otimes_\bbR \cV_i
\]
is an irreducible complex $\Gamma_0$-representation, and is called the \emph{complexification} of $\cV_i$. On the other hand, if $\cV_i$ is of \emph{complex type}, then it is also an irreducible complex $\Gamma_0$-representation. Therefore
\[
V^C = V_0^C \oplus V_1^C \oplus \dots \oplus V_r^C,
\]
where, for $\cV_i$ of real type
\[
U_i := \cV_i^C
\]
is a complex $\Gamma_0$-isotypic component modeled on $\cV_i^C$, and if $\cV_i$ is of complex type, then
\[
V_i^C = U_i \oplus U_i'
\]
where $U_i$ is a complex $\Gamma_0$-isotypic component modeled on $\cV_i$ and $U_i'$ is a complex $\Gamma_0$-isotypic component modeled on $\overline{\cV_i}$. For simplicity, in this paper we will assume all the irreducible $\Gamma_0$-representations $\cV_i$ are of real type. 
\vs
$S^1$ has a two-dimensional real irreducible representation $\mathcal W_k$ for each $k \in \mathbb N$, where 
\[
\mathcal W_k := \{p\cos(kt) + q\sin(kt): p,q\in \mathbb R\}
\]
and $\theta \in S^1$ acts on $u \in \mathcal W_k$ as $\theta u(t) = u(t+k\theta)$. Then all irreducible $G$-representation are given by
\[
\mathcal W_{k,j} := \mathcal W_k \otimes \mathcal V_j^-
\]
where $(\theta,\sigma,\pm1) \in G$ acts on $u = (u_1(t),\dots,u_n(t))^T \in \mathcal W_{k,j}$ as
\[
(\theta,\sigma,\pm1)(u_1(t),\dots,u_n(t))^T = (u_{\sigma(1)}(t+k\theta),\dots,u_{\sigma(n)}(t+k\theta))^T,
\]
where $\sigma \in \Gamma_0$ is viewed as a permutation of the indices of vectors in $\mathbb R^n$. Since every $u \in \mathscr E$ admits a Fourier decomposition, the $G$-isotypic component $W_{k,j}$ (modeled on the irreducible representation $\mathcal W_{k,j})$ is equivalent as a G-space to
\[
\mathscr E_{k,j} := \{c\cos(kt)+d\sin(kt):c,d \in V_j\}
\]
and so the $G$-isotypic decomposition of $\mathscr E$ is given by
\[
\mathscr E := \overline{\bigoplus_{k=0}^\infty \bigoplus_{j=0}^r \mathscr E_{k,j}}
\]
\vs
\subsubsection{Orbit types}\label{sec:prelim-orbittypes}
We denote by $\Phi(G)$ the set of all conjugacy classes of subgroups of $G$. A conjugacy class of subgroups $(H)$ is called an \textit{orbit type} if there is some $u \in \mathscr E\setminus\{0\}$ such that $hu = u$ for all $h \in H$. We denote by $\Phi(G;\mathscr E)$ the set of all orbit types of $G$ on $\mathscr E$. These sets admit a natural partial order given by subconjugacy: $(H) \leq (K)$ if and only if there exists some $H' \in (H)$ and $K'\in (K)$ such that $H' \leq K'$. 
\vs
We say that $(H)$ is a \textit{maximal orbit type} if $(H)\leq (K)$ implies $(K)=(H)$ or $(K)=(G)$. On the other hand, a vector $u \in \mathscr E$ has the \textit{isotropy group} $G_u := \{g\in G:gu=u\}$, and we call $(G_u)$ the \textit{isotropy type} of $u$. So we may talk about the orbit types of $G$ on $\mathscr E$, or about the isotropy types of some particular $u\in \mathscr E$, and these are dual notions.
\vs
The ETNS degree is a form of twisted degree, which can detect solutions according to their spatio-temporal symmetries. In particular, it can detect non-constant periodic solutions whose isotropy $H \leq G$ satisfies $\dim W_G(H) = 1$, where $W_G(H) = N_G(H)/H$ is the Weyl group of $H$ in $G$ and $N_G(H)$ is the normalizer of $H$ in $G$.
\vs
These are so-called \textit{twisted orbit types}, and take the form
\[
({\mathbb Z_{km}}^{\mathbb Z_k} \times {}^{H_0}H) := \{(z,h)\in \mathbb Z_{km} \times H: \theta(h)=z^m\}
\]
where $\theta:\Gamma \to S^1$ is a homomorphism with $\ker \theta = H_0$. We denote the set of twisted orbit types by $\Phi_1^t(G;\mathscr E)$. 
\vs

We will denote the set of maximal twisted orbit types in $\Phi_1^t(G;\mathscr E_{k,j})$ as $\mathfrak M_{k,j}$, and the maximal twisted orbit types in $\Phi_1^t(G;\mathscr E_{1,j})$ simply as $\mathfrak M_j$. For more details on the amalgamated notation used above to describe twisted subgroups, see Appendix \ref{appendix:amalg}. For a detailed construction of $\Phi_1^t(G)$ and the twisted degree, see \cite{wieslawbook}.
\vs
We also note that maximal twisted orbit types in $G$ may be very easily computed in GAP for any finite $\Gamma_0$ and any representation. Concrete examples of such computations will be illustrated in Section \ref{sec:example}. 

\vs
Viewing system \eqref{eq:basic-sys} as an NFDE, under assumptions \ref{c1} -- \ref{c3}, it clearly meets the conditions outlined in Section \ref{sec:introduction:overview}, and its period normalization and functional space reformulation will be along similar lines. To apply the equivariant Krasnosel'skii and Rabinowitz theorems, we must find a computable formula for the local bifurcation invariants, and show that those invariants are well-defined in the first place. 
\vs
This means showing that critical values of $\alpha$ are isolated, and studying the spectrum of our linearized system. We will do this in two stages. First, we will linearize the system \eqref{eq:basic-sys} and study its characteristic operator equation. Then we will perform the same type of reformulation as in Section \ref{sec:introduction:overview}, linearize the period-normalized operator equation, and describe its spectrum in terms of the characteristic operator equation we obtain here. This will provide us with all necessary information to compute local bifurcation invariants and thus ultimately to prove the main results.

\section{Linearization, spectral analysis, and critical points}\label{sec:crit}

\vs
Linearizing \eqref{eq:basic-sys} around $x=0$ we obtain
\begin{equation}\label{eq:basic-sys-lin}
\frac{d}{dt} \left[\bm x - \gamma \int_0^{\tau_1} \bm x(t-s)ds\right] + a \bm x + \alpha b \int_0^{\tau_2} \bm x(t-s)ds + C \bm x = 0
\end{equation}
\vs
The characteristic operator equation for \eqref{eq:basic-sys-lin} is given by
\begin{equation}\label{eq:char-op}
\triangle_{\alpha}(\lambda):=\left(\lambda + \gamma(e^{-\lambda\tau_1}-1) + a - \alpha b \frac{e^{-\lambda\tau_2} -1}{\lambda}\right)\text{Id} + C
\end{equation}
\begin{definition}[Centers and limit frequencies]\normalfont
The $\bm x \equiv 0$ solution to \eqref{eq:basic-sys-lin} at $\alpha=\alpha_0$ is called a \emph{center} if there exists a corresponding $\beta_0>0$ (called the \textit{limit frequency}) such that $\det_{\mathbb C}\triangle_{\alpha_0}(i\beta_0) =0$. If, in addition, there exists $\varepsilon>0$ such that $0<|\alpha-\alpha_0|+|\beta-\beta_0|<\varepsilon$ implies
\[
{\det}_{\mathbb C}\triangle_{\alpha}(ik\beta) \not=0 
\]
for all $k\in\mathbb N$, then it is called an \emph{isolated center}.
\end{definition}

\vs
Since the system \eqref{eq:basic-sys} is $\Gamma_0$-equivariant, its linearization \eqref{eq:basic-sys-lin} is also $\Gamma_0$-equivariant and therefore so is $\triangle_\alpha(\lambda)$. As such, it can be decomposed along the $\Gamma_0$-isotypic components of $V$ by restricting it to the complexified irreducible $\Gamma_0$-representations $\cV_j^C$ on which the $\Gamma_0$-isotypic components are modeled. These will be called $\Gamma_0$-isotypic characteristic equations, denoted
\begin{equation}\label{eq:char-op-iso-restrict}
\triangle_{\alpha,j}(\lambda):= \triangle_\alpha(\lambda)_{|\cV^{C}_j}:\cV^{C}_j\to \cV^{C}_j
\end{equation}
Put $a_j := a + \mu_j$, where $\mu_j$ is the $j$-th eigenvalue of $C$ whose eigenspace $E(\mu_j)$ corresponds to the $j$-th $\Gamma_0$-isotypic component. Then under the assumptions \ref{c1} -- \ref{c3}, \eqref{eq:char-op-iso-restrict} can be written
\begin{equation}\label{eq:char-op-iso}
\triangle_{\alpha,j}(\lambda):=\left(\lambda + \gamma(e^{-\lambda\tau_1}-1) + a_j - \alpha b \frac{e^{-\lambda\tau_2} -1}{\lambda}\right)\text{Id},
\end{equation} 
and \eqref{eq:char-op} and its determinant can be expressed in terms of the $\Gamma_0$-isotypic characteristic equations as a block matrix:
\[
\triangle_\alpha(\lambda) = \begin{bmatrix}
    \triangle_{\alpha,0}(\lambda) & &\makebox(0,0){\text{\huge0}}\\
    & \ddots & \\
    \text{\huge0}& & \triangle_{\alpha,r}(\lambda)
\end{bmatrix},
\]
\begin{equation}\label{eq:char-block-det}
{\det}_{\mathbb C}(\triangle_\alpha(\lambda)) = \prod_{j=0}^r {\det}_{\mathbb C}(\triangle_{\alpha,j}(\lambda))^{m_j}
\end{equation}
Here, $m_j$ denotes the \textit{isotypic multiplicity} of the eigenvalue $\mu_j$, defined
\begin{equation}\label{eq:def-iso-mult}
m_j := \frac{\text{dim }E(\mu_j)}{\text{dim }\cV_j}
\end{equation}
\subsection{The critical set}
We define the \textit{characteristic quasipolynomial} corresponding to $\triangle_{\alpha,j}(\lambda)$, written
\begin{equation}\label{eq:char-root}
P_j(\alpha,\lambda) := \lambda^2 + \gamma(e^{-\lambda\tau_1}-1)\lambda + a_j\lambda -\alpha b(e^{-\lambda\tau_2}-1).
\end{equation}  
From \eqref{eq:char-block-det} we can clearly see that $\lambda \in {\mathbb C}\setminus\{0\}$ is a characteristic root of \eqref{eq:char-op} if and only if, for some $j=0,\dots,r$ and some $\alpha$, we have $P_j(\alpha,\lambda)=0$. Furthermore, the linearized system \eqref{eq:basic-sys-lin} admits a center if and only if, for some $j$, there exists $\alpha_0$ and $\beta_0$ such that $P_j(\alpha_0,i\beta_0) = 0$.
We will now show that the system \eqref{eq:basic-sys-lin} admits an infinite number of isolated centers.
Setting $\lambda = i\beta$ and substituting into \eqref{eq:char-root}, we obtain 
\[
-\beta^2 + i\beta\gamma(\cos(\beta\tau_1) - i\sin(\beta\tau_1) -1) + i\beta a_j - \alpha b (\cos(\beta\tau_2) - i\sin(\beta\tau_2) - 1) = 0
\]
which yields the system
\begin{align*}
-\beta^2 + \gamma\beta\sin(\beta\tau_1) - \alpha b (\cos(\beta\tau_2) - 1) &= 0\label{eq:system-sincos}\\
\gamma\beta(\cos(\beta\tau_1)-1) + \beta a_j + \alpha b \sin(\beta\tau_2) &= 0.\nonumber
\end{align*}
Solving the second equation for $\alpha$, we get
\begin{equation}\label{eq:alpha-relation}
\alpha = \frac{-\gamma\beta(\cos(\beta\tau_1)-1)-\beta a_j}{b\sin(\beta\tau_2)}.
\end{equation}
Substituting this into the first equation and dividing out $\beta$, we obtain the relation
\begin{equation}\label{eq:beta-relation}
\beta = \gamma\sin(\beta\tau_1) + \frac{(\gamma(\cos(\beta\tau_1)-1)+a_j)(\cos(\beta\tau_2)-1)}{\sin(\beta\tau_2)}.
\end{equation}

\begin{figure}[tbp]
    \centering
    \begin{minipage}{0.45\textwidth}
        \centering
        \includegraphics[width=\linewidth]{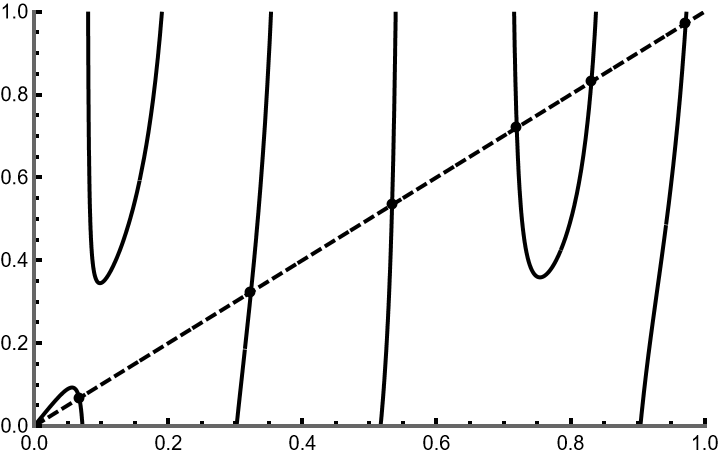}
        \subcaptiontext{a}{Limit frequencies for $0<\beta<1$}
    \end{minipage}\hfill
    \begin{minipage}{0.45\textwidth}
        \centering
        \includegraphics[width=\linewidth]{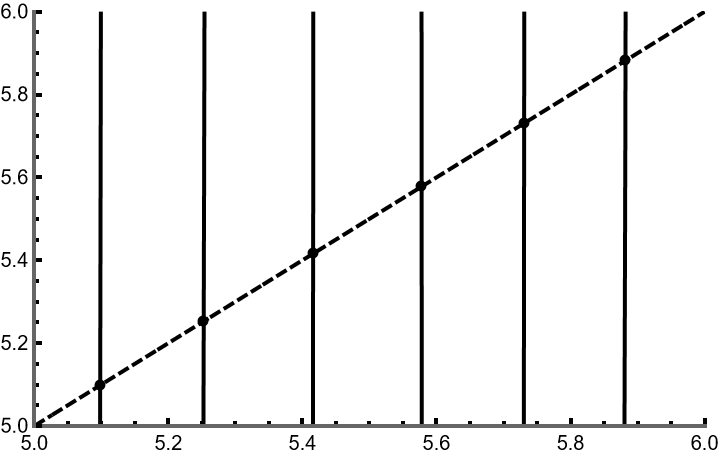}
        \subcaptiontext{b}{Limit frequencies for $5<\beta<6$}
    \end{minipage}
    \caption{Example plots of limit frequencies of \eqref{eq:basic-sys} showing behavior at relatively lower and higher values of $\beta$. Here $\gamma = 0.6$, $\tau_1 = 9$, $\tau_2 = 40$, $a_j=0.17$, and $b=0.4$. As $\beta$ grows larger, $\beta_{n+1,j} - \beta_{n,j}$ approaches $\frac{2\pi}{\tau_2}$.}
    \label{fig:beta-relation}
\end{figure}
A solution $\beta_0$ to the transcendental coincidence problem \eqref{eq:beta-relation} corresponds to a characteristic root of \eqref{eq:char-op-iso}. Such a root, through \eqref{eq:alpha-relation}, corresponds to an isolated center $\alpha_0$ of \eqref{eq:basic-sys} at $x=0$ with limit frequency $\beta_0$. To further analyze this coincidence problem and describe the set of all such centers and limit frequencies, put
\begin{align*}
\phi_j(\beta) &:= \gamma\sin(\beta\tau_1) + \frac{(\gamma(\cos(\beta\tau_1)-1)+a_j)(\cos(\beta\tau_2)-1)}{\sin(\beta\tau_2)}\\
\zeta(\beta) &:= \beta
\end{align*}
and note that the intersection of the graphs of $\phi_j$ and $\zeta$ in $\mathbb R_+^2$ forms a discrete and monotonically increasing sequence of values, as shown in Figure \ref{fig:beta-relation}. This can be seen from the fact that $\phi_j$ is periodic and has a bounded numerator. As $\beta$ grows large, intersections can only occur once per period, near the points where $\sin(\beta\tau_2)$=0, which clearly form a discrete set. Therefore we obtain a sequence $\alpha_{n,j}$ of isolated centers of \eqref{eq:basic-sys}, and corresponding characteristic roots $\beta_{n,j}$ satisfying
\begin{equation}\label{eq:coincidence}
\begin{aligned}
\alpha_{n,j} &= -\beta_{n,j}\frac{\gamma(\cos(\beta_{n,j}\tau_1) -1) + a_j}{b\sin(\beta_{n,j}\tau_2)}\\
\beta_{n,j} &= \gamma\sin(\beta_{n,j}\tau_1) + \frac{(\gamma(\cos(\beta_{n,j}\tau_1)-1)+a_j)(\cos(\beta_{n,j}\tau_2)-1)}{\sin(\beta_{n,j}\tau_2)}
\end{aligned}
\end{equation}
A pair of these values together with the trivial solution $(\alpha_{n,j},\beta_{n,j},0)$ is called a \emph{critical point} of our system \eqref{eq:basic-sys}, and the set 
\begin{equation}\label{eq:crit-set}
\Lambda := \{(\alpha_{n,j},\beta_{n,j},0) : n\in{\mathbb N},\; j=0,1,\dots,r, \;\alpha_{n,j},\beta_{n,j}>0\}    
\end{equation}
is called the \emph{critical set} for \eqref{eq:basic-sys}. 
\vs

\subsection{Transversality}
Taking $\lambda(\alpha) = u(\alpha) + iv(\alpha)$ and substituting into \eqref{eq:char-root}, we obtain the system
\begin{align*}
u^2 - v^2 + \gamma (u (e^{-u\tau_1}\cos(v\tau_1)-1) + ve^{-u\tau_1}\sin(v\tau_1)) + u a_j - \alpha b (e^{-u\tau_2}\cos(v\tau_2)-1) &= 0\\
2uv + \gamma (v(e^{-u\tau_1}\cos(v \tau_1) - 1) - u e^{-u\tau_1}\sin(v\tau_1)) + v a_j + \alpha b e^{-u\tau_2}\sin(v\tau_2) &= 0
\end{align*}
\vs
\noindent
Differentiating this with respect to $\alpha$ gives
\begin{align*}
&\begin{cases}
\begin{aligned}
&2uu' - 2vv' - \gamma u'\tau_1e^{-u\tau_1}(u\cos(v\tau_1) + v\sin(v\tau_1)) -\gamma u'\\
+ &\gamma e^{-u\tau_1}(u'\cos(v\tau_1)-uv'\sin(v\tau_1)\tau_1 + v'\sin(v\tau_1)+vv'\cos(v\tau_1)\tau_1)\\
+ &u'a_j + \alpha b \tau_2 e^{-u\tau_2} (u'\cos(v\tau_2) + v'\sin(v\tau_2)) = b(e^{-u\tau_2}\cos(v\tau_2)-1)
\end{aligned}
\end{cases}\\
\\
&\begin{cases}
\begin{aligned}
&2vu' + 2uv' - \gamma u' \tau_1 e^{-u\tau_1}(v\cos(v\tau_1) - u\sin(v\tau_1)) -\gamma v'\\
+ &\gamma e^{-u\tau_1}(v'\cos(v\tau_1) - vv'\sin(v\tau_1)\tau_1 - u'\sin(v\tau_1) - uv'\cos(v\tau_1)\tau_1)\\
+ &v'a_j + \alpha b \tau_2 e^{-u\tau_2}(v'\cos(v\tau_2) - u'\sin(v\tau_2)) = -be^{-u\tau_2}\sin(v\tau_2)\\
\end{aligned}
\end{cases}
\end{align*}
where $u' := \frac{d}{d\alpha}u(\alpha)$ and $v' := \frac{d}{d\alpha}v(\alpha)$. Since we are interested in the behavior of purely imaginary roots, we set $u = 0$ and $v = \beta$ to obtain
\begin{subequations}\label{eq:u'v'}
\begin{align}
&\begin{cases}
\begin{aligned}
&u'(\gamma(\cos(\beta\tau_1)-1)+a_j-\gamma\beta\tau_1\sin(\beta\tau_1)+\alpha b \tau_2 \cos(\beta\tau_2))\\+ &v'(\gamma\sin(\beta\tau_1)+\gamma\beta\tau_1\cos(\beta\tau_1)+\alpha b \tau_2\sin(\beta\tau_2)-2\beta) = b(\cos(\beta\tau_2)-1)
\end{aligned}
\end{cases}\\ \nonumber
\\ 
&\begin{cases}
\begin{aligned}
&u'(-\gamma\sin(\beta\tau_1)-\gamma\beta\tau_1\cos(\beta\tau_1)-\alpha b \tau_2 \sin(\beta\tau_2) + 2\beta)
\\+&v'(\gamma(\cos(\beta\tau_1)-1)+a_j-\gamma\beta\tau_1\sin(\beta\tau_1)+\alpha b \tau_2 \cos(\beta\tau_2)) = -b\sin(\beta\tau_2)
\end{aligned}
\end{cases}
\end{align}
\end{subequations}
Put
\begin{align*}
p &:= \gamma(\cos(\beta\tau_1)-1) + a_j - \gamma\beta\tau_1\sin(\beta\tau_1) + \alpha b \tau_2 \cos(\beta\tau_2)\\
q &:= \gamma\sin(\beta\tau_1)+\gamma\beta\tau_1\cos(\beta\tau_1)+\alpha b \tau_2\sin(\beta\tau_2)-2\beta
\end{align*}
Then \eqref{eq:u'v'} can be written as the linear system
\begin{align*}
u'p + v'q &= b(\cos(\beta\tau_2) -1)\\
-u'q + v'p &= b\sin(\beta\tau_2)
\end{align*}
which can be solved to obtain
\begin{equation}\label{eq:pq}
\tfrac{\partial}{\partial \alpha}P_j(\alpha,i\beta)=u' = \frac{pb(\cos (\beta\tau_2) -1) + qb \sin (\beta\tau_2)}{p^2+q^2}
\end{equation}
Put
\[
\rho := pb(\cos (\beta\tau_2) -1) + qb \sin (\beta\tau_2)
\]
\vs
\noindent Clearly, the sign of $\rho$ determines the sign of $\tfrac{\partial}{\partial \alpha}P_j(\alpha,i\beta)$. For the purposes of proving the unboundedness of branches of solutions, we are mostly interested in determining the sign of $\tfrac{\partial}{\partial \alpha}P_j(\alpha,i\beta)$ for very large critical values of $\alpha$. 
 
This leads to the following proposition:
\begin{proposition}\label{prop:crossing-num}
Let $P_j(\alpha,\lambda)$ be defined as in \eqref{eq:char-root} and put $\lambda(\alpha) = u(\alpha)+iv(\alpha)$. If 
\[
|\gamma(\tau_1 - \tau_2)| < |\tau_2(\gamma - a_j)-2|
\]
then
\[
\sum_{n=1}^{\infty}\sign\big(\tfrac{\partial}{\partial \alpha}P_j(\alpha_{n,j},i\beta_{n,j})\big) \not=0
\]
\end{proposition}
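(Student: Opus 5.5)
The plan is to show that, for all sufficiently large $n$, the sign of $\rho$ (hence of $\tfrac{\partial}{\partial\alpha}P_j(\alpha_{n,j},i\beta_{n,j})$) is eventually constant. Then the tail of the sum is a divergent sum of equal nonzero terms, and only finitely many early terms can differ. Strictly speaking $\sum_n \sign(\cdot)$ is meant in the sense used later for the bifurcation invariants: the sum over critical points with $\alpha_{n,j}$ in a bounded range, which grows without bound as the range grows. So it suffices to prove eventual sign constancy. I will therefore work asymptotically along the sequence \eqref{eq:coincidence}.

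\textbf{Step 1: asymptotics of the critical points.} By the discussion after \eqref{eq:beta-relation}, large intersections occur near zeros of $\sin(\beta\tau_2)$. Since $\beta_{n,j}\to\infty$ while the numerator $(\gamma(\cos-1)+a_j)(\cos(\beta\tau_2)-1)$ stays bounded, we need $(\cos(\beta\tau_2)-1)/\sin(\beta\tau_2)=-\tan(\beta\tau_2/2)$ to be large. Hence $\beta_{n,j}\tau_2\to (2k+1)\pi$ from the appropriate side, and
\[
\sin(\beta_{n,j}\tau_2)\approx \frac{2(\gamma(\cos(\beta_{n,j}\tau_1)-1)+a_j)}{-\beta_{n,j}}\to0,\qquad \cos(\beta_{n,j}\tau_2)\to-1 .
\]
Using \eqref{eq:alpha-relation}, I will write $\alpha b\sin(\beta\tau_2)=-\beta(\gamma(\cos\beta\tau_1-1)+a_j)$ exactly. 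This gives $\alpha_{n,j} b\approx \beta_{n,j}^2/2$, so $\alpha_{n,j}\to\infty$ with a definite sign matching $b$.

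\textbf{Step 2: leading order of $p$, $q$, $\rho$.} I will substitute these relations into $p$ and $q$. Write $c_1=\cos(\beta\tau_1)$ and $s_1=\sin(\beta\tau_1)$. The exact identity above gives $\alpha b\tau_2\sin(\beta\tau_2)=-\tau_2\beta(\gamma(c_1-1)+a_j)$, so
\[
q=\beta\big(\gamma\tau_1 c_1-\tau_2(\gamma c_1-\gamma+a_j)-2\big)+O(1).
\]
Meanwhile $p\approx -\alpha b\tau_2\approx-\tau_2\beta^2/2$ dominates. In $\rho=b[p(\cos\beta\tau_2-1)+q\sin\beta\tau_2]$ the first term is $\approx -2bp\approx b\tau_2\beta^2$, of order $\beta^2$. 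The second term is $bq\sin(\beta\tau_2)$, also of order $\beta\cdot\beta^{-1}\cdot$const, which looks subleading. That suggests $\sign\rho=\sign b$ automatically, which makes the hypothesis superfluous.

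\textbf{Step 3: the main obstacle.} This is the delicate point. I would recheck whether the expansions cancel at leading order, in particular via the real-part equation $-\beta^2+\gamma\beta s_1-\alpha b(\cos\beta\tau_2-1)=0$, which gives $\alpha b(1-\cos)= \beta^2-\gamma\beta s_1$ exactly. I would substitute it into $p(\cos-1)$ exactly rather than approximately, and collect terms of order $\beta^2$ and $\beta$. If the order-$\beta^2$ pieces cancel, the sign of $\rho$ is governed by a combination of the form
\[
\gamma\tau_1 c_1-\gamma\tau_2 c_1+\tau_2(\gamma-a_j)-2 ,
\]
with $c_1$ oscillating in $[-1,1]$. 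The hypothesis $|\gamma(\tau_1-\tau_2)|<|\tau_2(\gamma-a_j)-2|$ is exactly what guarantees this combination has the fixed sign of $\tau_2(\gamma-a_j)-2$ for every value of $c_1$. Given that, I expect this to be the intended mechanism. Combined with the fixed sign of the prefactor (from $b$ and $\sin\beta\tau_2$), $\rho$ has constant sign for all large $n$.

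\textbf{Step 4: conclusion.} I will use $p^2+q^2>0$, which holds since $p\sim\beta^2$. Together with eventual sign constancy, this makes the signs $\sign(\tfrac{\partial}{\partial\alpha}P_j(\alpha_{n,j},i\beta_{n,j}))$ all equal to a fixed $\epsilon=\pm1$ for $n\ge N$. The partial sums therefore behave like $\epsilon(n-N)+O(1)$, and the sum is nonzero.
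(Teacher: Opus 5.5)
Your Step 2 is correct and is already the proof. The cancellation you worry about in Step 3 does not happen, so commit to Step 2 rather than bending the computation toward the hypothesis. Do exactly the check you propose. Write $A:=\gamma(\cos(\beta\tau_1)-1)+a_j$, $s_1:=\sin(\beta\tau_1)$, $C:=\cos(\beta\tau_2)$, $S:=\sin(\beta\tau_2)$, and use the two exact relations $\alpha bS=-\beta A$ and $\alpha b(1-C)=\beta^2-\gamma\beta s_1$ that hold at a critical point. Then
\[
\frac{\rho}{b}=-(1-C)\bigl(A-\gamma\beta\tau_1 s_1\bigr)-\tau_2C\bigl(\beta^2-\gamma\beta s_1\bigr)+qS .
\]
Since \eqref{eq:beta-relation} reads $\beta-\gamma s_1=-A\tan(\beta\tau_2/2)$, every critical point with $\beta>\gamma$ satisfies $|\cot(\beta\tau_2/2)|\le(|a_j|+2\gamma)/(\beta-\gamma)$. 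Hence $1+C=O(\beta^{-2})$, $S=O(\beta^{-1})$ and $q=O(\beta)$. Therefore $\rho/b=\tau_2\beta^2+O(\beta)$: the $\beta^2$ term survives. So $\sign\bigl(\tfrac{\partial}{\partial\alpha}P_j(\alpha_{n,j},i\beta_{n,j})\bigr)=\sign b$ for all large $n$, without using the hypothesis. The Step 3 fallback would not have worked as stated anyway. At these points $\sign S=-\sign A$, and $A\in[a_j-2\gamma,a_j]$ need not keep a fixed sign along the sequence when $0<a_j<2\gamma$, so the ``fixed sign of the prefactor'' is not available.

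This is a genuine departure from the paper, and your localization is the right one. The paper's proof places the large critical points at $\beta_{n,j}\tau_2\approx 2\pi l$, where $\cos(\beta\tau_2)\approx 1$. There $p(\cos(\beta\tau_2)-1)$ is second order and $\rho$ is driven by $qb\sin(\beta\tau_2)$. After inserting \eqref{eq:alpha-relation}, the leading coefficient of $q$ is $\gamma(\tau_1-\tau_2)\cos(\beta\tau_1)+\tau_2(\gamma-a_j)-2$. This is the same combination you found, and the hypothesis makes it sign-definite; it is the mechanism you were trying to reconstruct. But near $\beta\tau_2=2\pi l$ one has $(\cos(\beta\tau_2)-1)/\sin(\beta\tau_2)=-\tan(\beta\tau_2/2)\to 0$. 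So the right-hand side of \eqref{eq:beta-relation} stays bounded there, and no large critical points lie in that regime. That argument also leaves the sign of $\epsilon$ in $\sign(qb\epsilon\tau_2)$ uncontrolled. Your route gives a valid argument with explicit eventual sign $\sign b$, and it shows the stated inequality is superfluous. The paper's route is where the inequality enters, but only through a regime that contains no critical points. Your Step 4 ends where the paper's proof ends, with eventual constancy of the sign.
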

\begin{proof}
The transcendental coincidence problem \eqref{eq:beta-relation} implies that $\beta_{n,j}$ is monotone increasing. Since the numerator of \eqref{eq:beta-relation} is bounded, large values of $\beta_{n,j}$ occur when $\sin(\beta_{n,j}\tau_2)$ is very small, i.e. when $\beta_{n,j} = \frac{2\pi l}{\tau_2}+\epsilon$ for some $l\in{\mathbb N}$ and $0<|\epsilon|\ll1$. This allows us to use the small angle approximations to analyze the proportional relationships between the quantities which make up $\rho$. Indeed, for $\beta_{n,j}$ sufficiently large, we have
\begin{align*}
&\sin(\beta_{n,j}\tau_2) \approx \epsilon\tau_2 &\quad &\cos(\beta_{n,j}\tau_2) \approx 1-\frac{1}{2}(\epsilon\tau_2)^2\\
&\sin(\beta_{n,j}\tau_1) = \sin(2\pi l\tfrac{\tau_1}{\tau_2}+\epsilon\tau_1) &\quad &\cos(\beta_{n,j}\tau_1) = \cos(2\pi l\tfrac{\tau_1}{\tau_2}+\epsilon\tau_1)
\end{align*}
Since $|\frac{1}{2}(\epsilon\tau_2)^2| \ll |\epsilon\tau_2|$ for sufficiently small $\epsilon$, we have
\[
\begin{aligned}
\sign(\rho) &= \sign(qb\epsilon\tau_2)\\
&=\sign\big(b\epsilon\tau_2(\gamma\sin(2\pi l\tfrac{\tau_1}{\tau_2}+\epsilon\tau_1) + \gamma\beta\tau_1\cos(2\pi l\tfrac{\tau_1}{\tau_2}+\epsilon\tau_1) + \alpha b\epsilon\tau_2^2 - 2\beta)\big).
\end{aligned}
\]
Substituting \eqref{eq:alpha-relation} into the second equation:
\begin{align*}
\sign(b\epsilon\tau_2(\gamma\sin(2\pi l\tfrac{\tau_1}{\tau_2}+\epsilon\tau_1) + \gamma\beta(\tau_1-\tau_2)\cos(2\pi l\tfrac{\tau_1}{\tau_2}+\epsilon\tau_1)+\beta\tau_2(\gamma-a_j) - 2\beta)).
\end{align*}
Since $\beta$ is assumed to be very large (and in particular $\beta \gg \gamma$), we need only consider terms with a factor of $\beta$. Therefore, we see that the expression
\[
\gamma(\tau_1-\tau_2)\cos(2\pi l \tfrac{\tau_1}{\tau_2} + \epsilon\tau_1) +\tau_2(\gamma-a_j) - 2
\]
determines the sign of $\tfrac{\partial}{\partial \alpha}P_j(\alpha_{n,j},i\beta_{n,j})$ for all $\alpha_{n,j}$ sufficiently large. Therefore by the boundedness of $\cos(2\pi l\tfrac{\tau_1}{\tau_2}+\epsilon\tau_1)$, if 
\[
|\gamma(\tau_1 - \tau_2)| < |\tau_2(\gamma - a_j) -2|
\]
then $\tfrac{\partial}{\partial \alpha}P_j(\alpha_{n,j},i\beta_{n,j})$ must have a constant sign for all critical $\alpha_{n,j}$ sufficiently large.
\end{proof}
\vs
We also wish to find conditions on the system and on the bifurcation parameters which guarantee local asymptotic stability of the trivial solution (i.e. consensus). This requires us to analyze \eqref{eq:char-op} for a given value of $\alpha$, and show that all roots have negative real part. Due to the infinitude of roots of \eqref{eq:char-op}, this can be difficult for arbitrary values of $\alpha$. 
\vs
However, if we can find the first bifurcation point $\alpha_0$ where the trivial solution changes stability, and if we can find the asymptotic stability conditions when $\alpha=0$, then since the spectrum of \eqref{eq:basic-sys-lin} depends continuously on $\alpha$, this stability will hold for all $\alpha\in(0,\alpha_0)$. Since \eqref{eq:alpha-relation} has infinitely many discrete roots, we can take the first positive root as the first bifurcation point, provided that there is no smaller value of $\alpha$ which admits a zero eigenvalue. This leads to the following proposition:
\vs

\begin{proposition}
Consider the system \eqref{eq:basic-sys} satisfying \ref{c1}-\ref{c3}. Denote by $\alpha_0$ the smallest positive value of $\alpha$ satisfying \eqref{eq:alpha-relation} for any $j$. If $0<\gamma\tau_1 \leq 1$, $b>0$, and $a_j>0$ for all $j$, then the trivial solution is locally asymptotically stable for all $\alpha \in (0,\alpha_0)$, and there is no steady-state bifurcation for any $\alpha>0$.
\end{proposition}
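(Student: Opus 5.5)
The argument is a continuation-in-$\alpha$ for the characteristic quasipolynomials $P_j(\alpha,\lambda)$ of \eqref{eq:char-root}, equivalently for $\det_{\mathbb C}\triangle_{\alpha,j}(\lambda)$. It has three steps:
\begin{enumerate}
\item prove stability at $\alpha=0$;
\item rule out roots crossing the imaginary axis through the origin for any $\alpha>0$;
\item note that crossings through $\pm i\beta$, $\beta>0$, can only occur at values of $\alpha$ given by \eqref{eq:alpha-relation}.
\end{enumerate}
Continuity of the spectrum in $\alpha$ then gives stability on $(0,\alpha_0)$. One must first ensure the neutral spectrum cannot escape to the imaginary axis from infinity. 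The difference operator $\bm x-\gamma\int_0^{\tau_1}\bm x(t-s)\,ds$ has essential spectrum governed by $1-\gamma\frac{1-e^{-\lambda\tau_1}}{\lambda}$. For $|\lambda|$ large with $\operatorname{Re}\lambda\ge -\delta$, we have $|\gamma(1-e^{-\lambda\tau_1})/\lambda|<1$. Hence roots of $\triangle_{\alpha,j}$ cannot accumulate near the imaginary axis at infinity, and roots move continuously and only finitely many lie in any right half-plane $\operatorname{Re}\lambda\ge -\delta$. I would cite standard NFDE theory here, since the neutral part is a stable difference operator because $\gamma<1$ by \ref{c2}.

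\textbf{Step 1 ($\alpha=0$).} Here $\triangle_{0,j}(\lambda)=\lambda+\gamma(e^{-\lambda\tau_1}-1)+a_j$. Suppose $\operatorname{Re}\lambda\ge0$. Using $e^{-\lambda\tau_1}-1=-\lambda\int_0^{\tau_1}e^{-\lambda s}ds$, rewrite the root equation as
\[
\lambda\Big(1-\gamma\int_0^{\tau_1}e^{-\lambda s}ds\Big)=-a_j .
\]
Write $w:=1-\gamma\int_0^{\tau_1}e^{-\lambda s}ds$. Since $\big|\int_0^{\tau_1}e^{-\lambda s}ds\big|\le\tau_1$, we get $\operatorname{Re}w\ge 1-\gamma\tau_1\ge0$. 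The case $\gamma\tau_1=1$ needs $\lambda=0$ for equality, and then $\lambda w=0\neq -a_j$. The product $\lambda w$ of two numbers in the closed right half-plane is never a negative real unless one factor vanishes. Vanishing is excluded because $a_j>0$, and $w=0$ forces $\gamma\tau_1=1$ and $\lambda=0$. The borderline where both arguments are exactly $\pm\pi/2$ needs a short check: $\lambda=i\beta$ and $w$ purely imaginary, and the real part of $i\beta w$ then equals $\gamma\,\mathrm{Im}\!\int\!\cdots$, which can be handled directly. Thus all roots satisfy $\operatorname{Re}\lambda<0$ at $\alpha=0$.

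\textbf{Step 2 (no zero root).} $\lambda=0$ is a removable point of $\triangle_{\alpha,j}$. Taking the limit, $\triangle_{\alpha,j}(0)=a_j+\alpha b\tau_2>0$ for $\alpha\ge0$, since $b>0$ and $a_j>0$. So no real root passes through $0$, which also gives the claim of no steady-state bifurcation for $\alpha>0$. Any root crossing to the right half-plane must do so at some $i\beta$ with $\beta\neq0$. By conjugate symmetry we may take $\beta>0$, and then $(\alpha,\beta)$ must satisfy the system above, in particular \eqref{eq:alpha-relation}. Hence the first such $\alpha$ is at least $\alpha_0$, and for $\alpha\in(0,\alpha_0)$ every root of every $\triangle_{\alpha,j}$ stays in $\operatorname{Re}\lambda<0$. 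By \eqref{eq:char-block-det} the same holds for $\triangle_\alpha$. Local asymptotic stability of $\bm x\equiv0$ for \eqref{eq:basic-sys} then follows from the linearized stability principle for NFDEs with stable difference operator, valid under \ref{c2}--\ref{c3}.

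\textbf{Main obstacle.} I expect the hardest part to be making the continuity argument rigorous for a neutral equation. One must show that roots cannot enter the right half-plane from infinity, which is the uniform bound on the neutral part, and that the spectral bound is uniformly negative on compact subintervals. The borderline case $\gamma\tau_1=1$ in Step 1 also needs care. I would handle infinity first, by bounding $|\triangle_{\alpha,j}(\lambda)|\ge|\lambda|(1-\gamma\tau_1/|\lambda|\cdot 2)-a_j-2|\alpha b|/|\lambda|>0$ for $\operatorname{Re}\lambda\ge0$ and $|\lambda|$ large. Rouché's theorem then gives continuity of the finitely many roots in a bounded region.
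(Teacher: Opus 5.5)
Your argument is correct and has the same skeleton as the paper's proof: stability at $\alpha=0$, no root passing through $\lambda=0$, imaginary-axis crossings only at the critical values of $\alpha$, and continuation in $\alpha$. Both key sub-steps, however, are argued differently.

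At $\alpha=0$ the paper separates $\lambda=u+iv$ into real and imaginary parts. It uses a mean-value estimate on $\sin(v\tau_1)$ to force $u<\ln(\gamma\tau_1)/\tau_1\le 0$ for non-real roots, and treats real roots separately through convexity of $r(u)=u+\gamma e^{-u\tau_1}-\gamma$. Your factorization $\triangle_{0,j}(\lambda)=\lambda w(\lambda)$ with $\operatorname{Re}w\ge 1-\gamma\tau_1$ disposes of real and non-real roots at once and handles the endpoint $\gamma\tau_1=1$ cleanly. The paper's route, in exchange, yields an explicit bound on the real parts of non-real roots.

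For steady-state bifurcation the paper proves the stronger fact that no real root $u\ge 0$ exists for any $\alpha>0$, by comparing $r(u)+a_j>0$ with the negative quantity $\alpha b(e^{-u\tau_2}-1)/u$. Your evaluation $\triangle_{\alpha,j}(0)=a_j+\alpha b\tau_2>0$ is exactly what the continuation needs, and it correctly discards the spurious root $\lambda=0$ of $P_j=\lambda\,\triangle_{\alpha,j}$. Your a priori bound keeping roots away from infinity, together with the argument principle, also supplies the justification the paper leaves to the phrase ``the spectrum depends continuously on $\alpha$''.

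A few details need tidying.

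The claim that a product of two closed right half-plane numbers is never a negative real is false as written ($i\cdot i=-1$). However, your own equality analysis shows $\operatorname{Re}w>0$ whenever $\lambda\neq 0$ and $\operatorname{Re}\lambda\ge 0$. Hence $\arg(\lambda w)\in(-\pi,\pi)$ and the borderline case is empty.

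The estimate at infinity should read $|\triangle_{\alpha,j}(\lambda)|\ge|\lambda|-2\gamma-a_j-2\alpha b/|\lambda|$ on $\operatorname{Re}\lambda\ge 0$.

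The appeal to a stable difference operator ``because $\gamma<1$'' is inaccurate. Stability in Hale's sense of $\phi\mapsto\phi(0)-\gamma\int_0^{\tau_1}\phi(-s)\,ds$ requires $\gamma\tau_1<1$, since constants solve $Dx_t=0$ when $\gamma\tau_1=1$. So the citation fails at an allowed endpoint. The clean fix is the identity $\frac{d}{dt}\int_0^{\tau_1}\bm g(\bm x(t-s))\,ds=\bm g(\bm x(t))-\bm g(\bm x(t-\tau_1))$. It shows \eqref{eq:basic-sys} is really a retarded equation, so the retarded linearized stability principle applies for all $0<\gamma\tau_1\le 1$, with $o(|x|)$ remainders coming from differentiability at $0$.

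Finally, ``in particular \eqref{eq:alpha-relation}'' presupposes $\sin(\beta\tau_2)\neq 0$. In the non-generic case $\cos(\beta\tau_2)=-1$, $a_j=\gamma(1-\cos(\beta\tau_1))$, the point $i\beta$ is a root at $\alpha=\beta(\beta-\gamma\sin(\beta\tau_1))/(2b)>0$ without satisfying \eqref{eq:alpha-relation}. So $\alpha_0$ should be read as the first $\alpha>0$ at which the undivided real/imaginary system has a solution $\beta>0$. This imprecision is inherited from the statement itself.
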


\begin{proof}
Take equation \eqref{eq:char-op-iso} and set $\alpha = 0$. This yields
\[
\lambda + \gamma(e^{-\lambda \tau_1}-1) + a_j = 0.
\]
Substituting $\lambda = u+iv$ and separating real and imaginary parts obtains the system
\begin{align*}
    &u + \gamma(e^{-u\tau_1}\cos(v\tau_1)-1) +a_j = 0\\
    &v-\gamma e^{-u\tau_1}\sin(v\tau_1) = 0
\end{align*}
Obviously any solution to this system must satisfy the second equation, which is equivalent to the coincidence problem
\[
v=\gamma e^{-u\tau_1}\sin(v\tau_1).
\]
Clearly $v=0$ satisfies this equation. By the mean value theorem, this coincidence problem only admits solutions for $v>0$ if $1<\gamma \tau_1 e^{-u\tau_1}$, which implies $e^{u\tau_1}<\gamma\tau_1$, and so $u< \ln(\gamma\tau_1)/\tau_1$. Therefore if $\ln(\gamma\tau_1) \leq 0$, $u < 0$, and so $0\leq\gamma\tau_1 \leq 1$ suffices to guarantee that the coincidence problem has no nonzero solutions. 
\vs
When $v=0$, the first equation becomes
\[
u +\gamma e^{-u\tau_1}-\gamma + a_j = 0.
\]
Let $r(u) := u +\gamma e^{-u\tau_1}-\gamma$. Then $r(0)=0$. Since $\tfrac{d^2r}{du^2} = \tau_1^2 \gamma e^{-u\tau_1}\geq0$ for all $u$,  $r(u)$ is a convex function. We also have $\tfrac{dr}{du}>0$ at $u=0$ if $\gamma\tau_1\leq1$. This means that $r(u)$ takes a single minimum value which must be in the $u<0$ half-plane. So if $r(u)$ has another nonzero root, then this root must be negative. Therefore, if $a_j >0$, then the upwards translation of the graph of $r(u)$ by $a_j$ must cause the root at $u=0$ to move into the negative half plane. Therefore, real roots of $r(u)+a_j=0$ (if they exist) must all be negative.
\vs
Finally, we must address the possibility of steady-state bifurcation for $\alpha \in (0,\alpha_0)$. This corresponds to a purely real eigenvalue crossing zero and becoming positive. A value of $\alpha$ where this happens would not appear as a solution to \eqref{eq:alpha-relation}, as this relation assumes that $\lambda = i\beta$ with $\beta>0$. 
\vs
Taking equation \eqref{eq:char-op-iso}, substituting $\lambda = u$, and moving the term proportional to $\alpha$ to the right hand side, we obtain the coincidence problem
\[
r(u) = \frac{\alpha b(e^{-u\tau_2} -1)}{u}.
\]
Therefore, the existence of a real positive eigenvalue for some $\alpha>0$ corresponds to a real positive solution to this coincidence problem. By the continuity of eigenvalues as functions of $\alpha$, if we can show that no such solutions exist, then we have demonstrated that real eigenvalues cannot cross zero and that steady state bifurcation is therefore impossible (thus also justifying our consideration of the first $\alpha_0$ where Hopf bifurcation occurs as the boundary of the parameter range where the trivial solution is locally asymptotically stable).
\vs
Since $r(u)$ is convex and both roots (if they exist) are negative, this means that $r(u)>0$ for all $u > 0$. On the other hand, if $u>0$ then $e^{-u\tau_2}-1<0$. This implies that if $b>0$, then $\alpha b(e^{-u\tau_2}-1)/u < 0$ for all $u>0$. Therefore, the graphs of $r(u)$ and $\alpha b(e^{-u\tau_2}-1)/u$ cannot intersect on the $u\geq0$ half-plane, and so steady-state bifurcation is impossible for all $\alpha>0$.
\end{proof}
\vs
Note that this suffices to prove Theorem \ref{thm:mas-asymp}.

\begin{remark}\normalfont
The requirements that $0 \leq\gamma\tau_1\leq 1$, $a_j>0$, and $b>0$ have very natural interpretations in our system, viewed both as a general NFDE dynamical system and as a MAS. The term $\gamma\tau_1$ can be viewed as a type of upper bound on the feedback introduced by the neutral delay term near the trivial solution, viewed proportionally to $x(t)$. If $\gamma\tau_1>1$, this means that the neutral delay term is not dissipative and can cause unbounded growth of the derivative, or high frequency instabilities. This also corresponds to the $\kappa$-Lipschitz condition imposed on $G$ through condition \ref{c3}. 
\vs
In a MAS context, this suggests that an agent's memory of its own past derivative must not have a higher weight than the contribution of the rest of its state function (including its own memories of past states). The requirement that $a_j>0$ corresponds to an attractive coupling between oscillators, or a cooperative agent interaction in the MAS context. This reflects the intuition that consensus should reflect a fundamentally cooperative relationship between agents, which may destabilized by the weight of continuous state memory. Finally, $b>0$ simply ensures that the continuous memory term should promote a return to consensus, otherwise the consensus solution would be fundamentally unstable. 
    
\end{remark}

\vs
\section{Functional space reformulation}\label{sec:funcspace}

In this section, we reformulate \eqref{eq:basic-sys} as a fixed point problem in functional spaces, and show that it is a condensing perturbation of identity. We linearize this operator and describe the spectrum of this linearization in terms of the spectrum of \eqref{eq:basic-sys-lin} by leveraging the relationship between $G$-isotypic components of $\mathscr E$ and $\Gamma_0$-isotypic components of $V$. (Note that in this section and for the remainder of the paper, we will use $u(t)$ to refer to a period-normalized function, not to be confused with its usage as the real part of an eigenvalue in the previous section.) 
\vs
As with \eqref{eq:fde-norm}, we set $u(t):=\bm x(\tfrac{p}{2\pi}t)$ and $\beta = \tfrac{2\pi}{p}$ and substitute into system \eqref{eq:basic-sys} to obtain the period-normalized system
\begin{equation}\label{eq:basic-sys-norm}
\frac{d}{dt}\left[u- \int_0^{\tau_1} \bm g(u(t-\beta s))ds\right] = -\frac{a}{\beta}u - \frac{\alpha}{\beta}\bm f\left(\int_0^{\tau_2} u(t-\beta s)ds\right) - \frac{1}{\beta}\bm h(u) 
\end{equation}

We define the space $\mathscr E$ and the operators $L, \bm j, N_{\bm f}$ and $N_{\bm k}$ as in Section \ref{sec:introduction:overview} and put
\begin{align*}
N_{\bm k}(\alpha,\beta,u) &= \int_0^{\tau_1}\bm g(u(t-\beta s))ds\\
N_{\bm f}(\alpha,\beta,u) &= -\frac{\alpha}{\beta}u - \frac{\alpha}{\beta}\bm f\left(\int_0^{\tau_2} u(t-\beta s)ds\right) - \frac{1}{\beta}\bm h(u) 
\end{align*}
\vs
yielding
\begin{equation}\label{eq:op-sys}
\mathscr F(\alpha,\beta,u) = u - N_{\bm k}(\alpha,\beta,u) - (L + \bm j)^{-1}\Big(N_{\bm f}(\alpha,\beta,u) + j(u) - N_{\bm k}(\alpha,\beta,\bm j(u))\Big)
\end{equation}
The assumptions \ref{c1}--\ref{c3} guarantee that $\mathscr F$ is $G$-equivariant. Since $F$ is continuous and the delay operator $\int_0^{\tau_2}u(t-\beta s)ds$ is compact, their composition is compact, and so the Nemytskii operator $N_{\bm f}$ is compact. Because $G$ is $\kappa$-Lipschitzian with $\kappa<1$, $N_{\bm k}$ is condensing. Since the sum of a condensing operator and a compact operator is condensing, and because $L+j$ is an isomorphism, $\mathscr{F}:\bbR^2_+ \times \mathscr{E} \to \mathscr{E}$ is a $G$-equivariant condensing perturbation of identity, and thus can be analyzed using the ETNS degree.

\vs
Following this reformulation, the problem of finding periodic solutions to \eqref{eq:basic-sys} is now equivalent to finding $(\alpha,\beta,u) \in \bbR_+^2 \times \mathscr{E}$ such that $\mathscr{F}(\alpha,\beta,u) = 0$. $\mathscr F$ is also differentiable at every $(\alpha,\beta,0) \in \bbR_+^2 \times \mathscr E$. Put $\mathscr A(\alpha,\beta) := D_u \mathscr F(\alpha,\beta,0)$. To simplify notation, we also define a distributed delay operator:
\begin{align*}
K_{\tau}u(t) &:= \int_0^{\tau}u(t-\beta s)ds
\end{align*}
Then we have
\[
\begin{aligned}
\mathscr A(\alpha,\beta)u &= u - \gamma K_{\tau_1}u - (L+\bm j)^{-1}\left(-\frac{\alpha b}{\beta}K_{\tau_2}u - \frac{1}{\beta}Cu - \Big(\frac{a}{\beta} - 1\Big)u- \gamma K_{\tau_1}u\right)  \\
&= u - \gamma K_{\tau_1}u + (L+\bm j)^{-1}\left(\frac{\alpha b}{\beta}K_{\tau_2}u + \frac{1}{\beta}Cu + \Big(\frac{a}{\beta} - 1\Big)u+ \gamma K_{\tau_1}u\right).
\end{aligned}
\]
The $G$-equivariance of $\mathscr F$ implies that $\mathscr A$ is also $G$-equivariant. Accordingly, its eigenspaces correspond to $G$-isotypic components, and we define the restricted maps $\mathscr A_{k,j}(\alpha,\beta) := \mathscr A_{|\mathscr E_{k,j}}(\alpha,\beta):\mathscr E_{k,j} \to \mathscr E_{k,j}$. 
\vs
Note that the characteristic operator equation of the period normalized system \eqref{eq:basic-sys-norm} is given by
\[
\frac{1}{\beta}\triangle_{\alpha}(\lambda)
\]
and the characteristic operator equation for $(L+\bm j)$ is given by
\[
(L+\bm j)e^{\lambda t} = (\lambda + 1)e^{\lambda t}
\]
Combining this with the fact that $u(t) = \bm x(\frac{t}{\beta})$ and taking $\lambda = ik\beta$, we have
\begin{align}
\mathscr A_{k,j}(\alpha,\beta) &= \frac{1}{ik\beta + \beta}\triangle_{\alpha,j}(ik\beta)_{|\mathscr E_{k,j}}\quad &k>0\label{eq:char-Ak}\\
\mathscr A_{0,j}(\alpha,\beta) &= \left(\frac{\alpha b \tau_2 +a_j }{\beta}\right)_{|\mathscr E_{0,j}}\quad &k=0\label{eq:char-A0}
\end{align}
\section{Two-parameter bifurcation}\label{sec:two-parameter}
Here we will define the required $G$-homotopy invariant quantities (namely the crossing number and the local bifurcation invariant) and show how they are computed. Using the equivariant versions of the existence theorem of M.A. Krasnosel'skii and the unbounded continuation argument of the Rabinowitz alternative, we will prove Hopf bifurcation of global branches and conditions for their unboundedness in the absence of degeneracies, i.e. in the case that eigenvalue crossings are $G$-isotypically simple. 
\vs
We can view the system \eqref{eq:basic-sys-norm} as a two-parameter bifurcation problem given by
\[
\mathscr F(\alpha,\beta,0) = 0, \quad (\alpha,\beta,u) \in \bbR^2_+ \times \mathscr E
\]
Indeed, one can easily see that from the functional space reformulation shown above that the following facts hold:
\begin{enumerate}
    \item $\mathscr F$ is a condensing $G$-equivariant field.
    \item For all $(\alpha,\beta) \in \bbR^2_+$, $\mathscr F(\alpha,\beta,0)=0$. 
    \item For all $(\alpha,\beta) \in \bbR^2_+$, the derivative $\mathscr A(\alpha,\beta) := D_u \mathscr F(\alpha,\beta,u)$ exists, depends continuously on $(\alpha,\beta)$, and for any $(\alpha_0,\beta_0,u_0) \in \bbR^2_+ \times \mathscr E$ we have
    \[
    \lim_{(\alpha_0,\beta_0,u_0) \to (\alpha_0,\beta_0,0)}\frac{||\mathscr F(\alpha_0,\beta_0,u_0) - \mathscr A(\alpha_0,\beta_0)u_0||}{||u_0||} = 0
    \]
\end{enumerate}
\vs
We will now give a few basic definitions related to symmetric Hopf bifurcation in a topological context, which are necessary to describe bifurcating branches and their symmetries. Denote by $\mathscr M$ and $\mathscr S$ the sets of all trivial and nontrivial solutions to \eqref{eq:basic-sys-norm} respectively. Then we have 
\begin{align*}
\mathscr M &:= \{(\alpha,\beta,0) \in \bbR^2_+ \times \mathscr E:\quad \mathscr F(\alpha,\beta,0)=0\}\\
\mathscr S &:= \{(\alpha,\beta,u)\in\bbR^2_+ \times \mathscr E:\quad\mathscr F(\alpha,\beta,0)=0,\quad u\not\equiv0\}
\end{align*}
Clearly this is a partition of the set of solutions of \eqref{eq:basic-sys-norm}.
\begin{definition}\normalfont
A point $(\alpha_0,\beta_0,0)$ is called a \textit{bifurcation point} of \eqref{eq:basic-sys-norm} if for every neighborhood $U\subset \bbR^2_+ \times \mathscr E$ containing $(\alpha_0,\beta_0,0)\in \mathscr M$, we have $U \cap \mathscr S \not= \emptyset$.
\end{definition}
\begin{definition}\normalfont
A nonempty subset $\mathscr C \subset \mathscr S$ is called a \textit{branch of nontrivial solutions} to \eqref{eq:basic-sys-norm} if for some connected component $\mathcal D \subset \overline{\mathscr S}$ one has $\mathscr C = \mathcal D \cap \mathscr S$. Moreover, if $(\alpha_0,\beta_0,0)\in\overline{\mathscr C}$, then $(\alpha_0,\beta_0,0)\in \mathscr M$ is called a \textit{branching point}. 
\end{definition}
\begin{definition}\normalfont
A branch $\mathscr C' \subset \mathscr S^H$ is said to have \textit{symmetries at least $H$} if $\mathscr C' = \mathcal D' \cap \mathscr S^H$ where $\mathcal D'$ is a connected component of $\overline{\mathscr S^H}$.
\end{definition}
Note that every branching point is also a bifurcation point. From this definition, we obtain a necessary condition for $(\alpha_0,\beta_0,0)\in \mathscr M$ to be a bifurcation point. Namely, if $(\alpha_0,\beta_0,0)$ is a branching point, then $\mathscr A(\alpha_0,\beta_0):\mathscr E \to \mathscr E$ is not an isomorphism. Equations \eqref{eq:char-Ak} and \eqref{eq:char-A0} imply that this requires either $\det_{\mathbb C} \triangle_{\alpha_0}(ik\beta_0) = 0$ for some $k\in {\mathbb N}$, or $\tfrac{\alpha_0 b \tau_2 + a_j}{\beta_0} = 0$. This provides the connection between the critical set for the system \eqref{eq:basic-sys}, given by \eqref{eq:crit-set}, and the critical set of the period normalized system \eqref{eq:basic-sys-norm}. 
\vs
However, if $\tfrac{\alpha_0 b \tau_2 + a_j}{\beta_0} = 0$, this corresponds to a steady-state bifurcation. Since we are interested in showing the bifurcation of non-constant periodic solutions, we view this as a degeneracy. Excluding such points leads us to the non-constant critical set for \eqref{eq:basic-sys-norm}, denoted $\tilde\Lambda$: 

\[
\tilde{\Lambda} := \left\{(\alpha,\beta,0) \in \mathscr M:\;\; \exists_{k \in {\mathbb N}} \;\;{\det}_{\mathbb C} \triangle_\alpha(ik\beta) = 0 \text{ and } \frac{\alpha b \tau_2 + a_j}{\beta} \not= 0\right\}.
\]
Note that $(\alpha,\beta,0)\in\tilde{\Lambda}$ implies $(\alpha,k\beta,0)\in\Lambda$ for some $k\in{\mathbb N}$, and so $\Lambda \subseteq \tilde\Lambda$. Since each $(\alpha_0,\beta_0,0) \in \tilde{\Lambda}$ is an isolated point, for some sufficiently small $\varepsilon,\delta > 0$, one can find an \textit{isolated $G$-invariant neighborhood} $\Omega(\alpha_0,\beta_0,0)$ given by
\[
\Omega(\alpha_0,\beta_0,0) := \{(\alpha,\beta,u) \in \bbR^2_+ \times \mathscr E:\;\; |(\alpha,\beta)-(\alpha_0,\beta_0)| < \delta,\;\; ||u|| < \varepsilon\}
\]
such that
\[
\Omega(\alpha_0,\beta_0,0) \cap \overline{\mathscr S} = \emptyset.
\]
\vs
Now, following the approach of Ize et al. (cf. \cite{izemassabovignoli1992}) and the equivariant generalizations of this approach (cf. \cite{krawcewiczwuxia1993}), by using an equivariant analog of the Tietze-Dugundji theorem, one can construct a continuous $G$-equivariant auxiliary function $\eta:\bbR^2_+ \times \mathscr E \to \bbR$ satisfying
\[
\begin{cases} \eta(\alpha,\beta,0)<0 &\text{ if } \; |(\alpha,\beta)-(\alpha_0,\beta_0)|  = \delta,\\
\eta(\alpha,\beta,v)>0 &\text{ if } \; |(\alpha,\beta)-(\alpha_0,\beta_0)|\le \delta \text{ and }\; \|v\|=\ve.\\
\end{cases}
\]
\vs
Now define $\mathscr F_\eta : \overline{\Omega(\alpha_0,\beta_0,0)} \to \mathscr \bbR \times \mathscr E$ by 
\[
\mathscr F_\eta(\alpha,\beta,u) = (\eta(\alpha,\beta,u),\mathscr F(\alpha,\beta,u)).
\]
Then $(\mathscr F_\eta,\Omega(\alpha_0,\beta_0,0))$ forms an admissible $G$-pair, and we can define the local bifurcation invariant $\omega_G(\alpha_0,\beta_0)\in A_1^t(G)$ by
\[
\omega_G(\alpha_0,\beta_0) := G\text{-deg}(\mathscr F_\eta, \Omega(\alpha_0,\beta_0,0)),
\]
where $G$-deg is the twisted $G$-equivariant Nussbaum-Sadovskii degree. It is clear from the homotopy invariance of the degree that $\omega_G(\alpha_0,\beta_0)$ is independent of the choice of $\varepsilon,\delta,$ and the auxiliary function $\eta$. 
\vs
This local bifurcation invariant is the basis of the equivariant version of the local Hopf bifurcation theorem of Krasnosel'skii, which states that if some orbit type $(H)$ has a non-zero coefficient in $\omega_G(\alpha_0,\beta_0)$, then there exists a branch of solutions with symmetries at least $(H)$ and having limit frequency $\beta_0$ bifurcating from $(\alpha_0,\beta_0,0)$. However, for this result to be practically useful in our case, we need a workable computational formula for $\omega_G(\alpha_0,\beta_0)$.

\vs

\subsection{Crossing numbers}
It is well known that Hopf bifurcation requires a transversality condition. This condition is satisfied at some critical point $(\alpha_0,\beta_0,0)$ if \eqref{eq:pq} is nonzero at that point, which can easily be evaluated numerically. However, in the context of computing the local bifurcation invariant, the crossing of eigenvalues at isolated centers is instead captured in the crossing number.

\begin{definition}[Crossing numbers]\label{def:cross-num}
\normalfont Take an isolated center $(\alpha_0,\beta_0,0) \in \tilde{\Lambda}$. Then for some sufficiently small $\varepsilon > 0$ and $\delta > 0$, there exists $U := (0,\varepsilon) \times (\beta_0 - \delta, \beta_0 + \delta) \subset \bbR^2_+$ such that for all $(p,\beta) \in U$, one has that
\[
{\det}_{\mathbb C} \triangle_\alpha(p + i\beta) \not= 0
\]
holds whenever $\alpha \in [\alpha_0-\delta,\alpha_0+\delta]$ and either
\begin{enumerate}
    \item $|\beta-\beta_0| = \delta$ and $p \in (0,\varepsilon)$
    \item $\beta \in (\beta_0 - \delta, \beta_0 + \delta)$ and $p = \varepsilon$
    \item $\beta \in (\beta_0 - \delta, \beta_0 + \delta)$, $p = 0$, and $\alpha \not= \alpha_0$
\end{enumerate}
\vs
In other words, at the critical point $(\alpha_0,\beta_0,0)$, roots $\lambda_{\alpha_0} \in {\mathbb C}_+$ of the characteristic equation can only enter or exit the region $U$ when $\lambda_{\alpha_0} = i\beta$ and when $\alpha = \alpha_0$. The crossing number is the net count of eigenvalues crossing the imaginary axis counted with their algebraic multiplicities. This value can be expressed using the Brouwer degree for values of $\alpha$ very near the critical point $\alpha_0$. Indeed, put $\alpha_\pm := \alpha_0 \pm \delta$ for $\delta>0$ sufficiently small. Then the crossing number is given by
\[
\mathfrak{t}(\alpha_0,\beta_0) := \mathfrak{t}_-(\alpha_0,\beta_0) - \mathfrak{t}_+(\alpha_0,\beta_0),
\]
where 
\[
\mathfrak{t}_\pm(\alpha_0,\beta_0) := \text{deg}(\triangle_{\alpha_\pm},U(\beta_0)) = \sign({\det}_{\mathbb C}\triangle_{\alpha_\pm}(i\beta_0)).
\]
\end{definition}

\begin{definition}[Isotypic crossing numbers] \normalfont
We can further decompose these crossing numbers along the $\Gamma$-isotypic components of $\bbR^n$ to obtain the net count of eigenvalues crossing the imaginary axis on particular component, counted with their algebraic multiplicities. We denote this
\[
\mathfrak{t}_{j}(\alpha_0,\beta_0) := \sign({\det}_{\mathbb C}\triangle_{\alpha_-,j}(i\beta_0)) - \sign({\det}_{\mathbb C}\triangle_{\alpha_+,j}(i\beta_0)).
\]
\end{definition}
\begin{definition}[$k$-resonant isotypic crossing numbers]\normalfont
In order to obtain the crossing numbers corresponding to the period normalized problem, we consider the count with respect to the $G$-isotypic components, obtaining the $k$-resonant $\Gamma_0$-isotypic crossing numbers
\[
\mathfrak{t}_{k,j}(\alpha_0,\beta_0) := \mathfrak{t}_j(\alpha_0,k \beta_0),\quad k \in {\mathbb N}.
\]
\end{definition}
Notice that this topological condition is clearly weaker than the typical transversality condition, as it does not require eigenvalues crossing the imaginary axis to be simple, and it makes no particular requirement of the velocity (with respect to $\alpha$) with which eigenvalues cross the imaginary axis. On the other hand, computing this difference of determinants near critical points may be inconvenient. While the allowance of non-simple eigenvalue crossings is of great importance in studying symmetric Hopf bifurcation (where symmetries often force higher multiplicities of eigenvalues to cross simultaneously), the velocity requirement may be less important if we understand such degenerate crossings to be non-generic. In this case, we can present the following useful lemma:
\begin{lemma}\label{lem:sign-crossing-num}
Let $(\alpha_0,\beta_0,0)\in \tilde\Lambda$ be an isolated of \eqref{eq:basic-sys-lin}. If $\tfrac{\partial}{\partial \alpha}P_j(\alpha_0,i\beta_0) \neq 0$, then 
\[
\mathfrak t_{k,j}(\alpha_0,\beta_0) = -\sign\big(\tfrac{\partial}{\partial \alpha}P_j(\alpha_0,i\beta_0)\big)m_j
\]
\end{lemma}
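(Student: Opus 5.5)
The plan is to reduce the isotypic crossing number to counting the roots of a single scalar quasipolynomial, and then read that count off from the sign of the $\alpha$-derivative of its real part. First I would use the block decomposition \eqref{eq:char-block-det}. On the $j$-th isotypic component $\triangle_{\alpha,j}(\lambda)$ is the scalar $\chi_j(\alpha,\lambda):=\lambda+\gamma(e^{-\lambda\tau_1}-1)+a_j-\alpha b\frac{e^{-\lambda\tau_2}-1}{\lambda}$ times the identity. Hence, counted over the whole $\Gamma_0$-isotypic component $E(\mu_j)$, the relevant determinant is $\chi_j(\alpha,\lambda)^{m_j\dim\cV_j}$. The crossing number is the Brouwer degree of this determinant on the region $U(\beta_0)$ of Definition \ref{def:cross-num}, measured for $\alpha_-$ and for $\alpha_+$. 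By multiplicativity of the degree, that degree equals $m_j$ times the degree of $\chi_j(\alpha_\pm,\cdot)$ on $U$, once one adopts the paper's normalization in which the count is per copy of the irreducible $\cV_j^C$ (giving the isotypic multiplicity $m_j$).

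Next I would note that for $\lambda\neq0$ we have $\chi_j=P_j/\lambda$, and $\lambda$ has no zeros in $\overline U$. So the degree of $\chi_j(\alpha_\pm,\cdot)$ on $U$ equals the number of zeros of $P_j(\alpha_\pm,\cdot)$ in $U$, counted with multiplicity, since $P_j$ is holomorphic in $\lambda$. This reduces $\mathfrak t_{k,j}$ to $m_j$ times the net number of roots of $P_j$ that leave or enter the strip $U$ as $\alpha$ moves from $\alpha_-$ to $\alpha_+$. (The $k$-resonant version is the same computation with $\beta_0$ replaced by $k\beta_0$.)

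The key local step is to show that $i\beta_0$ is a simple root of $P_j(\alpha_0,\cdot)$, so that the implicit function theorem gives a $C^1$ root curve $\lambda(\alpha)=u(\alpha)+iv(\alpha)$ with $\lambda(\alpha_0)=i\beta_0$. Simplicity should follow from the hypothesis, because the computation leading to \eqref{eq:pq} shows that $\partial_\lambda P_j(\alpha_0,i\beta_0)$ corresponds to $p-iq$ (up to sign conventions). If this vanished, the linear system \eqref{eq:u'v'} would be degenerate, so the nondegeneracy $p^2+q^2\neq0$ implicit in writing \eqref{eq:pq} is exactly what is needed. With $\delta$ and $\varepsilon$ small, isolation of the center (Definition \ref{def:cross-num}) ensures that this branch is the only root near $i\beta_0$ and that it stays away from $\partial U$ except through the imaginary axis. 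If $u'(\alpha_0)>0$, the root lies outside $U$ (real part $<0$) at $\alpha_-$ and inside $U$ at $\alpha_+$. This gives $\mathfrak t_-=0$, $\mathfrak t_+=1$, and so a crossing number of $-1$. The case $u'<0$ is symmetric and gives $+1$. Since $\tfrac{\partial}{\partial\alpha}P_j(\alpha_0,i\beta_0)$ is, in the paper's notation \eqref{eq:pq}, identified with $u'$, this yields $\mathfrak t_{k,j}=-\sign(u')m_j$.

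The main obstacle is the degree bookkeeping. One must check that the paper's formula $\mathfrak t_\pm=\sign\det\triangle_{\alpha_\pm}(i\beta_0)$ really encodes the count of zeros in $U$ rather than a literal sign. One must also check that the multiplicity contributed is $m_j$ and not $m_j\dim\cV_j$. I would handle this by working directly with the Brouwer degree $\deg(\chi_j^{m_j}(\alpha_\pm,\cdot),U)$, interpreting $\det_{\mathbb C}\triangle_{\alpha,j}$ on $\cV_j^C$ as scalar per isotypic copy, consistent with \eqref{eq:char-block-det}. The simplicity of $i\beta_0$ is the secondary delicate point; I would verify it by differentiating $P_j$ in $\lambda$ at $i\beta_0$ and comparing with $p,q$.
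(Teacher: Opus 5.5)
Your proposal is correct and is essentially the paper's argument made explicit. The paper's proof is only the remark that the formula follows from the smooth dependence of $\triangle_{\alpha,j}$ on $\alpha$ and the definitions of the crossing numbers and of $m_j$, and your implicit-function-theorem root curve plus zero counting in $U$ is what that remark abbreviates; your planned check also works out exactly, since $\partial_\lambda P_j(\alpha_0,i\beta_0)=p-iq$, so the hypothesis (read as $u'=\rho/(p^2+q^2)\neq 0$) already presupposes a simple root. Your reading of the $k$-resonant case (evaluate at $ik\beta_0$) and your normalization of the degree on the isotypic component by $\dim\mathcal V_j$ (so each crossing of $\chi_j$ contributes $m_j$) are the interpretations under which the statement holds, and they match how the paper uses crossing numbers in its example.
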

\begin{proof}
This follows directly from the smooth dependence of $\triangle_{\alpha,j}$ with respect to $\alpha$, and the definitions of the crossing numbers and $\Gamma$-isotypic multiplicity $m_j$ \eqref{eq:def-iso-mult}.
\end{proof}

\vs
We will now present three foundational results needed to complete our proofs. All three are proved in the general equivariant case in \cite{krawcewicz1997theory} and \cite{wieslawbook}, and we will simply restate them here in the context of our system for the reader's convenience.
\vs
The first is the computational formula for local bifurcation invariants, which depends upon the crossing number and on the twisted basic degrees $\text{\rm deg}_{\cV_{k,j}^-}$, which can be easily computed in GAP:
\begin{theorem}
Given system \eqref{eq:basic-sys} with $\bm f,\bm g,$ and $\bm h$ satisfying \ref{c1} -- \ref{c3}, if $(\alpha_0,\beta_0,0) \in \tilde{\Lambda}$ is an isolated critical point, then
\[
\omega_G(\alpha_0,\beta_0) = \text{\rm $\Gamma$-deg}(\mathscr A_{0}(\alpha_0,\beta_0),B(V))\cdot \sum_{k=1}^\infty\sum_{j=0}^r\mathfrak t_{k,j}(\alpha_0,\beta_0)\text{\rm deg}_{\cV_{k,j}^-}
\]

\end{theorem}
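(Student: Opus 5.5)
The plan is the standard reduction of the local bifurcation invariant to a product of basic degrees, following \cite{krawcewicz1997theory,wieslawbook}.

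\textbf{Step 1: replace $\mathscr F$ by its linearization.} I will homotope $\mathscr F_\eta$ on $\Omega(\alpha_0,\beta_0,0)$ to the map $(\alpha,\beta,u)\mapsto(\eta,\mathscr A(\alpha,\beta)u)$. Property 3 of Section \ref{sec:two-parameter} (Fréchet differentiability at the trivial solution, uniformly in $(\alpha,\beta)$) ensures this homotopy is admissible once $\varepsilon,\delta$ are small. The homotopy stays condensing: $\mathscr F-\mathscr A$ is a difference of condensing/compact pieces, and the $N_{\bm k}$ part has Lipschitz constant $\kappa<1$ by \ref{c3}. Homotopy invariance of the ETNS degree then gives equality of the degrees.

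\textbf{Step 2: deform $\eta$ to a standard form.} I will replace $\eta$ by $\eta(\alpha,\beta,u)=\|u\|-\varepsilon\,\phi(\alpha,\beta)$ with $\phi<0$ on $|(\alpha,\beta)-(\alpha_0,\beta_0)|=\delta$. This is admissible because the isolation of the center means $\mathscr A(\alpha,\beta)$ is invertible on the boundary circle.

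\textbf{Step 3: split along the $G$-isotypic decomposition.} I decompose $\mathscr E=\overline{\bigoplus}\mathscr E_{k,j}$. Using the multiplicativity (product) property of the degree, the linear map on $\mathscr E_0=V$ contributes the factor $\Gamma\text{-deg}(\mathscr A_0(\alpha_0,\beta_0),B(V))$, since $\mathscr A_{0,j}$ is invertible and independent of the loop by \eqref{eq:char-A0} in $\tilde\Lambda$.

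For $k\ge1$, only finitely many $(k,j)$ have $\det_{\mathbb C}\triangle_{\alpha_0,j}(ik\beta_0)=0$. All remaining components are homotoped to the identity through isomorphisms. Here I need a finite-dimensional approximation that is compatible with condensing maps. I would try projecting onto $\bigoplus_{k\le M}\mathscr E_{k,j}$: for large $k$, the neutral part $u-\gamma K_{\tau_1}u$ dominates, so $\mathscr A_{k,j}\approx\text{Id}-\gamma\widehat{K}_{\tau_1}$, which is invertible since $\gamma\tau_1$ is controlled.

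\textbf{Step 4: evaluate the critical components.} On each critical $\mathscr E_{k,j}\simeq\mathbb C^{m_j}\otimes\cV_j$, \eqref{eq:char-Ak} identifies $\mathscr A_{k,j}(\alpha,\beta)$ with multiplication by the complex scalar $\triangle_{\alpha,j}(ik\beta)/(ik\beta+\beta)$. The factor $(ik\beta+\beta)^{-1}$ is nonvanishing and can be homotoped away. The map from the circle $\partial B_\delta(\alpha_0,\beta_0)$ to $\mathbb C\setminus\{0\}$ has winding number equal to the $k$-resonant isotypic crossing number $\mathfrak t_{k,j}(\alpha_0,\beta_0)$, by Definition \ref{def:cross-num} together with the identification via $\lambda=p+ik\beta$. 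I then apply the standard suspension/Hopf property of the twisted degree: such a $\mathbb C$-linear loop of winding number $w$ on an isotypic component modeled on $\cV_{k,j}^-$ has degree $w\cdot\deg_{\cV_{k,j}^-}$. Summing over $(k,j)$ via the additivity of the twisted part, and multiplying by the $\Gamma$-degree from Step 3, gives the formula.

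\textbf{Main obstacle.} I expect the main obstacle to be Step 3's infinite-dimensional reduction. The ETNS degree is defined through condensing approximations rather than compact ones, so one must verify that the neutral operator $\text{Id}-\gamma K_{\tau_1}$ restricted to high modes is homotopic to $\text{Id}$ through condensing isomorphisms. I would try the linear homotopy $\text{Id}-s\gamma K_{\tau_1}$, $s\in[0,1]$, using $\|\gamma K_{\tau_1}\|\le\gamma\tau_1\le\kappa<1$.
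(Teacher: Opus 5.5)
Your outline is the standard derivation of this formula: linearize, normalize $\eta$, split off $\mathscr E_0$ by the product property, discard non-critical modes, and read off $\deg_{\cV_{k,j}^-}$ from winding numbers. The paper gives no argument of its own and simply cites \cite{krawcewicz1997theory,wieslawbook} for it. The weak point is the step you flag as the main obstacle.

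\textbf{The justification in Step 3 is not available.} You use $\|\gamma K_{\tau_1}\|\le\gamma\tau_1\le\kappa<1$, and in Step 1 that $N_{\bm k}$ is $\kappa$-Lipschitz on $\mathscr E$. Neither follows from \ref{c1}--\ref{c3}.
\begin{itemize}
\item \ref{c3} controls the Lipschitz constant of $\bm g$. That gives $\gamma\le\kappa$, not $\gamma\tau_1\le\kappa$. The paper imposes $\gamma\tau_1\le 1$ separately in Theorem \ref{thm:mas-asymp}, and Figure \ref{fig:beta-relation} uses $\gamma\tau_1=5.4$.
\item In the norm of $\mathscr E$, the Lipschitz constant of $N_{\bm k}$ is governed by $\kappa\max\{\tau_1,2/\beta\}$, which need not be below $1$.
\end{itemize}
The bound is also unnecessary. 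On the $k$-th mode, $K_\tau$ acts by $(1-e^{-ik\beta\tau})/(ik\beta)=O(1/k)$, the same order as the terms you discard. Hence \eqref{eq:char-Ak} gives $|\mathscr A_{k,j}(\alpha,\beta)-1|=O(1/k)$ uniformly near $(\alpha_0,\beta_0)$. High modes are close to the identity whatever the size of $\gamma\tau_1$.

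\textbf{There is no genuinely condensing part here.} Since $\frac{d}{dt}N_{\bm k}(u)(t)=\beta^{-1}\big[\bm g(u(t))-\bm g(u(t-\beta\tau_1))\big]$ contains no derivative of $u$, $N_{\bm k}$ is a continuous map $C_{2\pi}(\bbR;\bbR^n)\to C^1_{2\pi}(\bbR;\bbR^n)$ precomposed with the compact embedding $\bm j$. It is therefore compact, and likewise $\gamma K_{\tau_1}$. So $\mathscr F$ and $\mathscr A$ are compact perturbations of the identity, and the ETNS degree is just the twisted Leray--Schauder degree.

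\textbf{Handle the tail by the Fredholm alternative, not norm bounds.}
\begin{itemize}
\item Let $P_M$ be the Fourier truncation to modes $|k|\le M$. It is bounded, finite-rank, $G$-equivariant, and commutes with $\mathscr A$.
\item Choose $M$ so that $|1-\mathscr A_{k,j}(\alpha,\beta)|<1$ for all $k>M$, all $j$, and all $(\alpha,\beta)$ in the closed $\delta$-disk.
\item Then $\mathrm{Id}-s(\mathrm{Id}-\mathscr A)$ is injective on $\ker P_M$; check this on Fourier coefficients.
\item Being identity minus compact, it is an isomorphism for every $s\in[0,1]$, so the tail contributes the unit.
\end{itemize}

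\textbf{Two smaller corrections.}
\begin{itemize}
\item In Step 2 you need $\phi>0$ on the circle (and $\phi<1$ on the disk). With $\phi<0$ you get $\eta(\alpha,\beta,0)>0$ there, which is the wrong sign.
\item In Step 4, $GL^G(\mathscr E_{k,j})\cong GL_{m_j}(\mathbb C)$. The scalar loop $z\,\mathrm{Id}$ is therefore homotopic to $\mathrm{diag}(z^{m_j},1,\dots,1)$ and contributes $m_j\,w\,\deg_{\cV_{k,j}^-}$, where $w$ is the winding number of $z$. So $\mathfrak t_{k,j}=m_jw$, matching the factor $m_j$ in Lemma \ref{lem:sign-crossing-num}, not the scalar's winding number as you state.
\end{itemize}
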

The second is the equivariant extension of the local bifurcation theorem of Krasnosel'skii:
\begin{theorem}\label{thm:localbif}
Given system \eqref{eq:basic-sys} with $\bm f,\bm g,$ and $\bm h$ satisfying \ref{c1} -- \ref{c3}, if $(\alpha_0,\beta_0,0) \in \tilde{\Lambda}$ is an isolated critical point with $\omega_G(\alpha_0,\beta_0)\not=0$, then there exists a branch of nontrivial solutions to $\eqref{eq:basic-sys}$ bifurcating from the trivial branch at $\alpha= \alpha_0$. Moreover, if $(H)\in\Phi_1^t(G)$ is an orbit type with
\[
\text{coeff}^H(\omega_G(\alpha_0,\beta_0)) \not=0
\]
then there exists a branch of solutions to \eqref{eq:basic-sys} with symmetries at least $(H)$ bifurcating from $(\alpha_0,0)$ with limit frequency $\beta_0$.
\end{theorem}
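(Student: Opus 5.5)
The plan is to derive Theorem \ref{thm:localbif} from the abstract equivariant Krasnosel'skii principle for admissible $G$-pairs, using the existence property of the twisted Nussbaum--Sadovskii degree. First, we argue by contradiction. Suppose $(\alpha_0,\beta_0,0)$ is not a bifurcation point. Then there are $\delta,\varepsilon>0$ such that $\overline{\Omega(\alpha_0,\beta_0,0)}$ contains no nontrivial solutions. We then deform the auxiliary function $\eta$ through a $G$-homotopy to the strictly positive function $\tilde\eta(\alpha,\beta,u)=\|u\|-\tfrac{\varepsilon}{2}$ inside $\Omega$, keeping $\mathscr F$ fixed, and check that no zeros of $(\eta_s,\mathscr F)$ reach $\partial\Omega$ along the way. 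This works because on $\partial\Omega$ the zeros of $\mathscr F$ are either trivial with $|(\alpha,\beta)-(\alpha_0,\beta_0)|=\delta$, where both functions are negative after the standard choice, or nontrivial, which are excluded. Since $\mathscr F$ is a condensing field by Section \ref{sec:funcspace} and $\eta$ is finite-dimensional-valued, the homotopy is admissible for the ETNS degree.

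Next we handle the trivial zeros. For the deformed pair, the zeros of $(\tilde\eta,\mathscr F)$ with $u=0$ disappear, because $\tilde\eta(\alpha,\beta,0)<0\neq 0$. Nontrivial zeros with $\|u\|\ge \varepsilon/2$ do not exist by assumption, so the pair has no zeros at all. The existence property then gives $\omega_G(\alpha_0,\beta_0)=0$, contradicting the hypothesis. Hence there is a sequence of nontrivial solutions converging to $(\alpha_0,\beta_0,0)$. A Whyburn-type connectedness lemma, applied on the compact set of solutions (compactness follows since $\mathscr F$ is a condensing perturbation of identity, so bounded closed solution sets are compact), upgrades this to a connected branch $\mathscr C$ with $(\alpha_0,\beta_0,0)\in\overline{\mathscr C}$. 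Undoing the period normalization $u(t)=\bm x(t/\beta)$ yields non-constant periodic solutions of \eqref{eq:basic-sys} whose periods tend to $2\pi/\beta_0$.

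For the symmetric part, we take $(H)$ with nonzero coefficient and restrict to the fixed-point space $\mathscr E^H$, on which $W(H)$ acts and $\dim W(H)=1$. The recurrence formula for twisted degrees expresses $\text{coeff}^H(\omega_G)$ in terms of the $W(H)$-degree (essentially an $S^1$-degree) of $\mathscr F_\eta$ restricted to $\Omega^H$, together with contributions from orbit types $(K)>(H)$. The argument then runs as before. If there were no nontrivial solutions in $\mathscr S^H$ near $(\alpha_0,\beta_0,0)$, then no solutions of any isotropy $(K)\ge(H)$ would occur in $\Omega^H$, so the restricted degree and hence $\text{coeff}^H$ would vanish. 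Applying the same connectedness step in $\overline{\mathscr S^H}$ produces a branch with symmetries at least $(H)$.

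The main obstacle is justifying the Whyburn and connectedness step and the restriction to $\mathscr E^H$ in the condensing, rather than compact, setting. It needs local compactness of the solution set, which I would take from the properness of condensing fields on bounded closed sets, and the fact that $\mathscr F^H$ remains condensing on $\mathscr E^H$. The latter follows because $N_{\bm k}$ restricts equivariantly with the same Lipschitz constant $\kappa<1$.
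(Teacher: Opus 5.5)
The paper gives no proof of its own here. It restates the abstract equivariant Krasnosel'skii theorem from \cite{krawcewicz1997theory,wieslawbook} for the condensing field $\mathscr F$. Deriving it from the degree axioms, as you do, is a reasonable route, and your first step is correct. Write $\Omega$ for $\Omega(\alpha_0,\beta_0,0)$. If $\overline{\Omega}$ contains no nontrivial solutions, the linear homotopy from $\eta$ to $\|u\|-\varepsilon/2$ is admissible and removes all zeros, so $\omega_G(\alpha_0,\beta_0)=0$. (That function is negative, not positive, at $u=0$, which is what you actually use.) This shows $(\alpha_0,\beta_0,0)$ is a \emph{bifurcation} point.

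The gap is the next step. Nontrivial solutions accumulating at $(\alpha_0,\beta_0,0)$, plus compactness of $K:=\overline{\mathscr S}\cap\overline{\Omega}$, do not give a connected branch. A sequence of disjoint, isolated $G$-orbits of solutions shrinking to the critical point is consistent with everything you have shown. In that case the component of $(\alpha_0,\beta_0,0)$ in $\overline{\mathscr S}$ is a singleton, and there is no branch in the paper's sense. Whyburn's lemma only produces a \emph{separation} of $K$; the contradiction must come from the degree a second time. Compactness, which you flag as the main obstacle, is the easy part.

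Concretely, let $T:=\{(\alpha,\beta,0):|(\alpha,\beta)-(\alpha_0,\beta_0)|\le\delta\}$. Isolation gives $K\cap T\subseteq\{(\alpha_0,\beta_0,0)\}$, and $K$ misses the lateral boundary of $\Omega$. Suppose the component of $(\alpha_0,\beta_0,0)$ in $K$ does not meet $\{\|u\|=\varepsilon\}$. Then Whyburn gives $K=A\sqcup B$ with $A,B$ compact, $(\alpha_0,\beta_0,0)\in A$ and $K\cap\{\|u\|=\varepsilon\}\subset B$. Replacing $A,B$ by $\bigcap_{g\in G}gA$ and $GB$ makes both $G$-invariant. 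Take a $G$-invariant continuous $\tilde\eta$ with $\tilde\eta<0$ on $A\cup T$ and $\tilde\eta>0$ on $B$.

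Zeros of $\mathscr F$ on $\partial\Omega$ lie either in $T$, where $\eta,\tilde\eta<0$, or in $B\cap\{\|u\|=\varepsilon\}$, where $\eta,\tilde\eta>0$. So $(1-s)\eta+s\tilde\eta$ is an admissible homotopy. Moreover $(\tilde\eta,\mathscr F)$ has no zeros, because $\mathscr F^{-1}(0)\cap\overline{\Omega}=A\cup B\cup T$. Hence $\omega_G(\alpha_0,\beta_0)=0$, a contradiction. So the component reaches $\{\|u\|=\varepsilon\}$, and this gives the branch. Your $\|u\|-\varepsilon/2$ is just the special case $K=\emptyset$.

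The same separation argument, run in $\Omega^H$ with the restricted degree, is needed for the branch with symmetries at least $(H)$. Your recurrence-formula argument, like your first step, only produces solutions, not a continuum.

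Two minor points:
\begin{enumerate}
\item Non-constancy is not part of the statement, and you do not justify it. It does hold near the point: the condition $\frac{\alpha_0 b\tau_2+a_j}{\beta_0}\neq0$ built into $\tilde\Lambda$ makes $\mathscr A_0(\alpha_0,\beta_0)$ invertible, which rules out small nontrivial constant solutions. The paper instead defers non-constancy to the $\bm K$-reduction.
\item $\mathscr F^H$ is condensing on $\mathscr E^H$ simply because the Kuratowski measure of a subset of a closed subspace is the same whether computed there or in $\mathscr E$. No Lipschitz estimate for $N_{\bm k}$ is required.
\end{enumerate}
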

The third is the equivariant version of the Rabinowitz alternative:
\begin{theorem}
Let $\Omega\subset \bbR^2_+ \times \mathscr E$ such that $\partial\Omega \cap \tilde{\Lambda} = \emptyset$. Then for every bounded connected component $\mathcal B \subset \overline{\mathscr M}$ of nontrivial solutions bifurcating from critical points in $\tilde{\Lambda}$, we have the following alternative: Either $\mathcal B$ is bounded and contained in $\Omega$, or $\partial \Omega \cap \mathcal B \not= \emptyset$. If $\mathcal B$ is bounded, it intersects the trivial branch at only finitely many critical points, i.e. 
\[
\mathcal B \cap \tilde{\Lambda} = \{(\alpha_1,\beta_1,0),\dots,(\alpha_N,\beta_N,0)\}
\]
and we have
\[
\sum_{i=1}^N \omega_G(\alpha_i,\beta_i) = 0
\]
\end{theorem}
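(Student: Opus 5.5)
The statement to address is the equivariant Rabinowitz alternative for the condensing field $\mathscr F$. I would prove it by reducing to the standard Nussbaum--Sadovskii version of the global bifurcation argument. The key point is that every ingredient of that argument survives the passage from compact to condensing maps, because the ETNS degree satisfies the same axioms as the twisted Leray--Schauder degree: existence, additivity, homotopy invariance and excision.

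The argument proceeds by contradiction. Suppose $\mathcal B$ is a bounded connected component of $\overline{\mathscr S}$ emanating from $\tilde\Lambda$. Boundedness gives that $\mathcal B$ is compact. For a condensing perturbation of identity, bounded closed solution sets are relatively compact: if $u=\Phi(u)$ with $\Phi$ condensing, then $\chi(B)\le\chi(\Phi(B))<\chi(B)$ unless $\chi(B)=0$.

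Next I use isolation of the points of $\tilde\Lambda$ (from the isolated-center property established in Section~\ref{sec:crit}). Combined with compactness, this shows that $\mathcal B\cap\tilde\Lambda$ is a finite set $\{(\alpha_i,\beta_i,0)\}_{i=1}^N$.

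Apply the Whyburn separation lemma to the compact set $\mathcal B$ inside $\overline{\mathscr S}\cup(\text{neighborhoods of }\tilde\Lambda)$. This yields a bounded $G$-invariant open set $\mathcal U\supset\mathcal B$ with three properties:
\begin{itemize}
\item $\partial\mathcal U\cap\overline{\mathscr S}=\emptyset$;
\item $\mathcal U\cap\mathscr M$ is contained in the union of the isolating neighborhoods $\Omega(\alpha_i,\beta_i,0)$;
\item there are no other critical points in $\overline{\mathcal U}$.
\end{itemize}
$G$-invariance is obtained by averaging, or by taking $G\mathcal U$, since $G$ is compact and $\mathcal B$ is $G$-invariant. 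The dichotomy ``either $\mathcal B\subset\Omega$ or $\partial\Omega\cap\mathcal B\neq\emptyset$'' is then just connectedness of $\mathcal B$.

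For the sum formula, I would build a global auxiliary function $\eta$ on $\overline{\mathcal U}$ via the equivariant Tietze--Dugundji theorem. It should be positive on $\overline{\mathcal U}\cap\{\|u\|\ge\varepsilon\}$ and negative on the trivial solutions outside $\bigcup_i\Omega(\alpha_i,\beta_i,0)$, and it should agree with the local $\eta_i$ near each $(\alpha_i,\beta_i,0)$.

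Then $(\mathscr F_\eta,\mathcal U)$ is an admissible $G$-pair. The reason is that zeros of $\mathscr F_\eta$ on $\partial\mathcal U$ would have to be either nontrivial solutions, which is excluded, or trivial points with $\eta=0$, which lie only inside the $\Omega_i$.

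Now deform $\eta$ through admissible homotopies to $\eta_t(\alpha,\beta,u)=\|u\|-\varepsilon$ for large $t$ (or, equivalently, push the trivial part away). Under this deformation $\mathscr F_\eta$ acquires no zeros, because $\eta>0$ forces $\|u\|\ge\varepsilon$ on a set where there are no solutions outside $\mathcal U$. So by the existence property $G\text{-deg}(\mathscr F_\eta,\mathcal U)=0$. By additivity and excision, the same degree also equals $\sum_{i}G\text{-deg}(\mathscr F_{\eta},\Omega(\alpha_i,\beta_i,0))=\sum_i\omega_G(\alpha_i,\beta_i)$. This gives $\sum_i\omega_G(\alpha_i,\beta_i)=0$.

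The main obstacle is the homotopy step: the deformation must keep the maps condensing perturbations of identity, uniformly in the homotopy parameter. Here $\eta$ is a real-valued (finite-dimensional) component, and it is handled by noting that $(\eta,\mathscr F)$ is the identity minus (a compact map in the $\eta$-slot, plus the condensing part of $\mathscr F$ in the $u$-slot). So the homotopy is condensing with respect to the Kuratowski measure on $\mathbb R\times\mathscr E$, and the homotopy property of the ETNS degree recorded in Appendix~\ref{appendix:n-s} applies directly.
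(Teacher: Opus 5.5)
Your architecture matches the standard argument in the references the paper cites in place of a proof: compactness of bounded solution sets from the condensing property, a Whyburn-type isolating neighbourhood $\mathcal U$, a global auxiliary function, then homotopy and excision. However, the step that is supposed to produce the vanishing does not work as written.

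After deforming $\eta$ to $\|u\|-\varepsilon$ you invoke the existence property. Yet $(\|u\|-\varepsilon,\mathscr F)$ does have zeros in $\mathcal U$: every point of $\mathcal B$ on the sphere $\|u\|=\varepsilon$ is one, and such points exist by connectedness as soon as $\mathcal B$ leaves the $\varepsilon$-ball. In fact, if $\varepsilon$ is below the $u$-radius of the isolating neighbourhoods, $\|u\|-\varepsilon$ satisfies the sign conditions required of the auxiliary function on each $\Omega(\alpha_i,\beta_i,0)$. Excision then gives $G\text{-deg}((\|u\|-\varepsilon,\mathscr F),\mathcal U)=\sum_i\omega_G(\alpha_i,\beta_i)$ once more, so reaching this endpoint proves nothing.

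The vanishing comes from continuing to $\|u\|-t$ for $t\in[\varepsilon,R]$ with $R>\sup_{\overline{\mathcal U}}\|u\|$ (equivalently, shifting a bounded $\eta$ downward). At that endpoint the auxiliary component is negative on all of $\overline{\mathcal U}$ and there are no zeros at all; this is where boundedness of $\mathcal U$ enters.

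Admissibility of the deformation has nothing to do with positivity on $\{\|u\|\ge\varepsilon\}$. Since $\partial\mathcal U\cap\overline{\mathscr S}=\emptyset$, the only zeros of $\mathscr F$ on $\partial\mathcal U$ are trivial points, and what you need is that the auxiliary function stays negative there. Your conditions give $\eta<0$ only at trivial points outside $\bigcup_i\Omega(\alpha_i,\beta_i,0)$, and $\partial\mathcal U$ may cut through these sets. So you must also:
\begin{enumerate}
\item take the local functions negative at every trivial point other than the centre (e.g.\ $\eta_i=\|u\|-\tfrac{\varepsilon}{\delta}|(\alpha,\beta)-(\alpha_i,\beta_i)|$);
\item keep the centres in the interior of $\mathcal U$;
\item choose $\varepsilon$ so small that every solution in $\overline{\mathcal U}$ with $\|u\|\le\varepsilon$ lies in $\bigcup_i\Omega(\alpha_i,\beta_i,0)$, which is what your excision step actually requires.
\end{enumerate}

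Two further points need care. First, ``bounded $\Rightarrow$ compact'' and the finiteness of $\mathcal B\cap\tilde\Lambda$ require $\beta$ to stay away from $0$; norm-boundedness alone is not enough. $\tilde\Lambda$ contains $(\alpha_{n,j},\beta_{n,j}/k,0)$ for every $k\in\mathbb N$, and these accumulate at $\beta=0$. There the compact part of $\mathscr F$, which carries factors $1/\beta$, is undefined and the condensing estimate degenerates, so ``bounded'' has to be read as including $\inf\beta>0$.

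Second, you tacitly use that every trivial point of $\overline{\mathscr S}$ lies in $\tilde\Lambda$. Points where the $k=0$ block $\mathscr A_{0,j}$ is singular form whole lines in the $(\alpha,\beta)$-plane and are not isolated. They must be excluded, for instance by working in $\mathscr E^{\bm K}$, or on the region $\alpha>0$ with $b>0$ and $a_j>0$.
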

\vs
An obvious corollary of the above, which we will use in the proof of Theorem \ref{thm:mas-global}, is that if the sum of local bifurcation variants is not zero, then there must exist a global branch of solutions. 
\vs
There are only a few remaining obstacles to a complete proof of our main results. 
Constant solutions are, of course, trivially periodic, and Theorem \ref{thm:localbif} does not distinguish between constant and non-constant periodic solutions. It is also possible, in the global case, that a branch of non-constant periodic solutions could collapse to constant solutions after bifurcation. The equivariant degree offers a convenient tool for handling these situations in the form of fixed point reduction, which allows us to consider the problem on a reduced subspace consisting only of solutions with certain prescribed symmetries. In the following section, we use this to enforce that solutions are odd, and therefore cannot be constant.

\section {Fixed point reduction}\label{sec:fixed-point-reduction}

\vs

Fix $\kappa \in {\mathbb N}$ and consider the subgroup of $G$ generated by 
\[
\bm K := \left<\left(\bm e,-1,e^{\tfrac{i\pi}{\kappa}}\right)\right> \leq \Gamma_0 \times \bbZ_2 \times S^1
\]
We then examine \eqref{eq:op-sys} in the fixed point space $\mathscr E^{\bm K}$, i.e. we are interested in solutions to the system
\begin{equation}\label{eq:func-sys-fixedpoint}
\mathscr F^{\bm K}(\alpha,\beta,u) = 0,\quad (\alpha,\beta,u)\in \bbR^2_+ \times \mathscr E^{\bm K}    
\end{equation}
We can see that any solution to \eqref{eq:func-sys-fixedpoint} must be an odd function and is also a solution to \eqref{eq:op-sys}, and the isotypic decomposition of $\mathscr E^{\bm K}$ is given by
\[
\mathscr E^{\bm K}_{k,j} := \begin{cases}
    \mathscr E_{k,j} \quad &\text{ if $k=(2l-1)\kappa$ for some $l\in{\mathbb N}$}\\
    0 \quad &\text{ otherwise}
\end{cases} 
\]
which immediately yields
\[
\mathscr E^{\bm K} = \overline{\bigoplus_{l=1}^\infty \bigoplus_{j=0}^r \mathscr E_{(2l-1)\kappa,j}}
\]
Clearly $\bm K$ is normal in $G$, and since $G_0 := G/\bm K = S^1 \times \Gamma_0$, the above system is a two-parameter $S^1 \times \Gamma_0$-equivariant bifurcation problem, and the set of critical points for \eqref{eq:func-sys-fixedpoint} can be described as follows:
\[
\tilde{\Lambda}^{\bm K}:=\{(\alpha_0,\beta_0,0) \in \bbR^2_+ \times \mathscr E: \exists l \in {\mathbb N}\;\;{\det}_{\mathbb C}(\triangle_{\alpha_0}(i(2l - 1)\kappa\beta_0))=0\}
\]
We also define
\[
\tilde{\Lambda}_k^{\bm K}:=\{(\alpha_0,\beta_0,0) \in \bbR^2_+ \times \mathscr E: \exists j\in {\mathbb N} \;\;{\det}_{\mathbb C}(\triangle_{\alpha_0,j}(i(2k - 1)\kappa\beta_0))=0\}
\]
and so
\[
\tilde\Lambda^{\bm K} = \bigcup_{k=1}^\infty \tilde\Lambda_k^{\bm K}
\]
\vs
This set is clearly infinite and all critical points in it are isolated, and the signs of crossing numbers are as given in Proposition \ref{prop:crossing-num}.
\vs
Thus we obtain the following formula for the local bifurcation invariant in the $\bm K$-fixed point reduction:
\[
\sum_{i=1}^N \omega_G(\alpha_i,\beta_i) = \sum_{i=1}^N\sum_{j=0}^r \sum_{l=1}^\infty \mathfrak t_{(2l-1)\kappa,j}(\alpha_i,\beta_i) \text{deg}_{\cV_{(2l-1)\kappa,j}}
\]

Now we can present the main global existence theorem. It is derived from the Rabinowitz alternative and proved for completely continuous maps in \cite{wieslawbook}, and here is trivially extended to condensing maps.
\begin{theorem}
Let $G=\Gamma_0\times\bbZ_2\times S^1$ and let $\mathscr F:\bbR_+^2\times \mathscr E \to \mathscr E$ be a $G$-equivariant condensing map satisfying assumptions \ref{c1} -- \ref{c3}. If $\tilde\Lambda_k^{\bm K}$ is discrete and finite for some $k=1,2,\dots$, and for some orbit type $(H)\in\Phi^t_1(G;\cV^-_{(2k-1)\kappa,j})$ we have
\[
\sum_{(\alpha_0,\beta_0,0)\in\tilde\Lambda_k^{\bm K}} \text{coeff}^H(\omega_G(\alpha_0,\beta_0)) \not=0
\]
where $\text{coeff}^H$ stands for the coefficient of $(H)$, then there exists an unbounded global branch $\mathscr C'\subset \mathscr S^H$ with $\mathscr C'\cap\tilde\Lambda_k^{\bm K} \not=\emptyset$.
\end{theorem}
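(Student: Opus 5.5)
The plan is to argue by contradiction from the equivariant Rabinowitz alternative, carried out in the fixed point space $\mathscr E^{\bm K}$ and then further restricted to the $H$-fixed point space. We work with $\mathscr F^{\bm K}$, which is a $G_0:=G/\bm K$-equivariant condensing field. Its $\mathscr E^{\bm K}_{0,j}$-component vanishes, so the constant-solution degeneracy (steady state) is excluded. Because $\bm K$ is normal, $\mathscr F^{\bm K}$ also inherits all the properties listed at the start of Section \ref{sec:two-parameter}. In particular, the local bifurcation invariants at points of $\tilde\Lambda^{\bm K}$ are given by the computational formula with the sum over $k$ restricted to $k=(2l-1)\kappa$.

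The key steps are as follows. First, suppose that the connected component $\mathcal D$ of $\overline{\mathscr S^H}$ containing a point of $\tilde\Lambda_k^{\bm K}$ is bounded. Second, using the fact that $\tilde\Lambda_k^{\bm K}$ is discrete and finite, construct a bounded $G$-invariant open set $\Omega\subset\bbR^2_+\times\mathscr E^{\bm K}$ with three properties: $\mathcal D\subset\Omega$, $\partial\Omega\cap\overline{\mathscr S^H}=\emptyset$, and $\overline\Omega\cap\tilde\Lambda^{\bm K}\subset\tilde\Lambda_k^{\bm K}$. This is the standard Whyburn-type separation lemma, since $\mathcal D$ is compact by the condensing property: bounded closed sets of zeros of a condensing perturbation of identity are compact. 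Third, apply the auxiliary function $\eta$ construction globally on $\Omega$ and use homotopy invariance of the ETNS degree. Away from the trivial branch, $\mathscr F_\eta$ has no zeros on $\partial\Omega$. Deforming $\eta$ to be negative on the trivial solutions then shows that
\[
G\text{-deg}(\mathscr F_\eta,\Omega)=\sum_{(\alpha_0,\beta_0,0)\in\overline\Omega\cap\tilde\Lambda^{\bm K}}\omega_G(\alpha_0,\beta_0)=0.
\]
Fourth, take $\text{coeff}^H$ of both sides. Here one must check that only the points of $\tilde\Lambda_k^{\bm K}$ contribute to the coefficient of $(H)$, because $(H)\in\Phi_1^t(G;\cV^-_{(2k-1)\kappa,j})$ and basic degrees of other isotypic components do not produce $(H)$ with nonzero coefficient after multiplication by the $\Gamma$-degree factor. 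This yields $\sum_{\tilde\Lambda_k^{\bm K}}\text{coeff}^H(\omega_G)=0$, contradicting the hypothesis.

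For the statement that the branch lies in $\mathscr S^H$ and not merely in $\mathscr S$, the plan is to perform the above argument on $(\mathscr E^{\bm K})^H$ with $W(H)$-equivariant degree, where $\dim W(H)=1$. We use the standard fact that $\text{coeff}^H$ of the $G$-degree equals the $W(H)$-degree of the restriction to $H$-fixed points, up to the free part. Finally, the unbounded component $\mathcal D'$ of $\overline{\mathscr S^H}$ gives $\mathscr C'=\mathcal D'\cap\mathscr S^H$, and it meets $\tilde\Lambda_k^{\bm K}$ by construction.

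I expect the main obstacle to be the fourth step: isolating the $(H)$-coefficient. Critical points belonging to other $\tilde\Lambda_{k'}^{\bm K}$ may lie in $\Omega$, and products of basic degrees in the Burnside-type ring $A_1^t(G)$ could in principle produce $(H)$ terms. I would first try to shrink $\Omega$ using discreteness, so that it excludes all other resonances. If that fails, I would argue via maximality of $(H)$ in $\mathfrak M$ that $(H)$ appears only in $\deg_{\cV^-_{(2k-1)\kappa,j}}$ with the known coefficient.
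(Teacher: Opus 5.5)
Your overall strategy, the Rabinowitz alternative for the $\bm K$-reduced problem restricted to $H$-fixed points, is the route the paper defers to. However, the step you single out as the main obstacle is not closed by either of your remedies, so there is a genuine gap.

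\textbf{The coefficient-isolation step.} Requiring $\mathcal D\subset\Omega$ together with $\overline\Omega\cap\tilde\Lambda^{\bm K}\subset\tilde\Lambda_k^{\bm K}$ is impossible in general. Nothing prevents the bounded component $\mathcal D$ itself from meeting the trivial branch at points of $\tilde\Lambda^{\bm K}_{k'}$ with $k'\neq k$. Even at points of $\tilde\Lambda_k^{\bm K}$, other modes may cross simultaneously. No shrinking of $\Omega$ can remove a point of $\mathcal D$.

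The appeal to maximality fails as well. The theorem does not assume $(H)$ is maximal. Moreover, the claim that $(H)$ occurs only in $\deg_{\cV^-_{(2k-1)\kappa,j}}$ is false even for maximal types: in the paper's own example, $\mathfrak M_3$ and $\mathfrak M_4$ share $(\bbZ_{4m}{}^{\bbZ_m}\times{}^{\bbZ_2^-}\bbZ_4^p)$ and $(\bbZ_{6m}{}^{\bbZ_m}\times\bbZ_3^p)$.

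The missing idea is separation by \emph{mode}, not by isotypic component. $S^1$ acts on $\mathscr E_{l,j'}$ with kernel exactly $\bbZ_l$. Hence every isotropy group of a nonzero vector there satisfies $H\cap S^1=\bbZ_l$, and orbit types of $\deg_{\cV^-_{l,j'}}$ with $l\neq(2k-1)\kappa$ never equal $(H)$.

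In $\mathscr E^{\bm K}$ there is no constant-mode block $\mathscr E_{0,j}$, so no $\Gamma$-degree prefactor appears. Even if it did, the $A(\Gamma)$-module product preserves $H\cap S^1$, because $S^1$ is central. Consequently:
\begin{itemize}
\item $\text{coeff}^H(\omega_G(\alpha_0,\beta_0))=0$ at every point of $\tilde\Lambda^{\bm K}\setminus\tilde\Lambda_k^{\bm K}$;
\item at points of $\tilde\Lambda_k^{\bm K}$, only the mode-$(2k-1)\kappa$ crossing numbers contribute. This holds for every $j'$, which is harmless since $\tilde\Lambda_k^{\bm K}$ quantifies over $j$.
\end{itemize}
Other resonances may sit inside $\Omega$; they simply contribute nothing to the $(H)$-coefficient.

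\textbf{Setting up the contradiction.} Building $\Omega$ around a single bounded $\mathcal D$ only yields $\sum_{\mathcal D\cap\tilde\Lambda_k^{\bm K}}\text{coeff}^H(\omega_G)=0$. That does not contradict a hypothesis about the sum over all of $\tilde\Lambda_k^{\bm K}$. You must instead suppose that \emph{every} component of $\overline{\mathscr S^H}$ meeting $\tilde\Lambda_k^{\bm K}$ is bounded. Then:
\begin{itemize}
\item by finiteness of $\tilde\Lambda_k^{\bm K}$ there are only finitely many such components;
\item the Rabinowitz alternative plus mode separation gives a zero $(H)$-sum over each of them;
\item points of $\tilde\Lambda_k^{\bm K}$ not in $\overline{\mathscr S^H}$ have zero $(H)$-coefficient by the local Krasnosel'skii theorem.
\end{itemize}
Adding these up contradicts the hypothesis.

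\textbf{The fixed-point correspondence.} The statement you invoke should read: $\text{coeff}^H$ of the $G$-degree equals the coefficient of the \emph{free} orbit type in the $W(H)$-degree of the restriction to $\mathscr E^H=(\mathscr E^{\bm K})^H$. Here $\bm K\subset H$ because $\bm K$ acts trivially on $\cV^-_{(2k-1)\kappa,j}$.

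With these corrections your outline becomes the standard proof.
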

The proof of this theorem follows exactly from the proof in \cite{wieslawbook}. It is also worth mentioning that, if $(\alpha_0,\beta_0,0)\in \tilde\Lambda_k^{\bm K}$, then $(\alpha_0,(2k-1)\kappa\beta_0,0)\in \tilde\Lambda\subset \Lambda$. In other words, if a maximal orbit type is detected on a higher mode, then there is a corresponding orbit type on the first mode. Therefore, the fixed point reduction does not impede our detection of solutions, and we are justified in only considering the first mode.
This theorem, which permits us to use the local and global bifurcation results even in the presence of the aforementioned degeneracies and guarantees the detected periodic solutions are non-constant, taken together with Proposition \ref{prop:crossing-num} and Lemma \ref{lem:sign-crossing-num}, conclude the proof of Theorem \ref{thm:mas-local}, Theorem \ref{thm:mas-global}, and Theorem \ref{thm:mas-sym}.

\section{Illustrative example: coupled asset markets}\label{sec:example}
In this section we will formulate an illustrative example in economics and analyze its symmetric bifurcations and dynamics. This example is intended to be motivational in nature and we make no claims about its empirical validity. It is meant to illustrate both a plausible setting for the type of MAS formulated in \eqref{eq:basic-sys}, the way in which this paper's main theorems can be concretely applied, and demonstrate the use of numerical methods to infer stability and minimum period of solutions from the symmetry information given by the degree.
\vs
\begin{figure}[tbp]
\centering
\begin{minipage}{0.4\textwidth}
\centering
\begin{tikzpicture}[
    scale=2.5,
    x={(1cm,0cm)},          
    y={(0cm,1cm)},          
    z={(-0.5cm,-0.5cm)},    
    vertex/.style={circle, draw=black, fill=black, inner sep=1.5pt},
    edge/.style={solid, thick, black},
    facediag/.style={dashed, thick, black},
    spacediag/.style={dotted, thick, black},
    every label/.style={font=\tiny},  
]

\coordinate (x1) at (0,1,1);   
\coordinate (x2) at (1,1,1);   
\coordinate (x3) at (0,1,0);   
\coordinate (x4) at (1,1,0);   
\coordinate (x5) at (0,0,1);   
\coordinate (x6) at (1,0,1);   
\coordinate (x7) at (0,0,0);   
\coordinate (x8) at (1,0,0);   

\foreach \i in {1,...,8}
    \node[vertex] at (x\i) {};

\node[above=3pt] at (x1) {$x_3$};
\node[above=3pt] at (x2) {$x_4$};
\node[above=3pt] at (x3) {$x_1$};
\node[above=3pt] at (x4) {$x_2$};
\node[below=3pt] at (x5) {$x_7$};
\node[below=3pt] at (x6) {$x_8$};
\node[below=3pt] at (x7) {$x_5$};
\node[below=3pt] at (x8) {$x_6$};

\foreach \i/\j in {1/2, 1/3, 1/5, 2/4, 2/6, 3/4, 3/7, 4/8, 5/6, 5/7, 6/8, 7/8}
    \draw[edge] (x\i) -- (x\j);

\foreach \i/\j in {1/4, 2/3, 5/8, 6/7, 3/8, 4/7, 1/6, 2/5, 1/7, 3/5, 2/8, 4/6}
    \draw[facediag] (x\i) -- (x\j);

\foreach \i/\j in {1/8, 2/7, 3/6, 4/5}
    \draw[spacediag] (x\i) -- (x\j);

\end{tikzpicture}
\end{minipage}%
\begin{minipage}{0.45\textwidth}
\centering
\small
\resizebox{\linewidth}{!}{%
    \(
    C = \begin{pmatrix}
        0 & c_1 & c_1 & c_2 & c_1 & c_2 & c_2 & c_3 \\
        c_1 & 0 & c_2 & c_1 & c_2 & c_1 & c_3 & c_2 \\
        c_1 & c_2 & 0 & c_1 & c_2 & c_3 & c_1 & c_2 \\
        c_2 & c_1 & c_1 & 0 & c_3 & c_2 & c_2 & c_1 \\
        c_1 & c_2 & c_2 & c_3 & 0 & c_1 & c_1 & c_2 \\
        c_2 & c_1 & c_3 & c_2 & c_1 & 0 & c_2 & c_1 \\
        c_2 & c_3 & c_1 & c_2 & c_1 & c_2 & 0 & c_1 \\
        c_3 & c_2 & c_2 & c_1 & c_2 & c_1 & c_1 & 0
    \end{pmatrix}
    \)%
    }
\end{minipage}
\caption{Cube-coupled system with coupling matrix $C$. Solid, dashed, and dotted lines indicate interactions with coupling strength given by $c_1,c_2$, and $c_3$ respectively.}\label{fig:cube}
\end{figure}
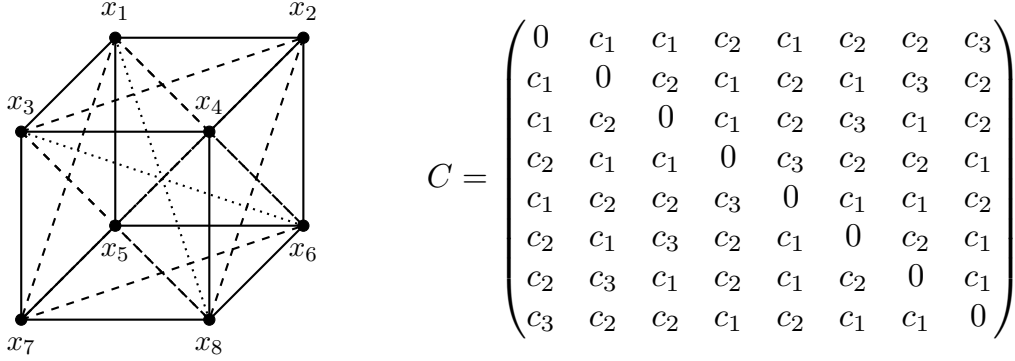
We consider a network of 8 coupled homogeneous asset markets. Put $x(t) := (x_1(t),\dots,x_8(t)) \in \mathbb R^8$, and denote by $x_i(t)$ the difference of the price of the $i$-th asset from its fundamental value. We consider each market as being populated by two types of trader: fundamentalist traders, who react to a weighted average of asset price deviations over a finite time horizon $\tau_1$, and momentum traders, who react to a weighted average of the rate of change of asset prices over some finite time horizon $\tau_2$. Then the state dynamics of the $i$-th asset market are given by: 
\[
\frac{d}{dt}\left[x_i - \int_0^{\tau_1}L(\gamma x_i(t-s))ds\right]=-ax_i - \alpha L\left(b\int_0^{\tau_2}x_i(t-s)ds\right) - h(x) \]
where the $-ax_i$ term represents a liquidity provider which reacts instantly to mispricing and forces the asset's price towards its fundamental value, $\alpha$ represents the strength or aggressiveness of fundamentalist traders, and $h(x) := L\left(\sum_{j=1}^8c_{ij}x_j\right)$ represents saturable coupling between markets (i.e. cross-market arbitrage), and the saturation function $L:\mathbb R\to [-L_{\text{sat}},L_{\text{sat}}]$ is given by the piecewise linear function
\[
L:= \begin{cases}
    L_{\text{sat}}&\quad x\geq L_{\text{sat}}\\
    x&\quad -1<x<1\\
    -L_{\text{sat}}&\quad x \leq -L_{\text{sat}},
\end{cases}
\]
chosen as a computationally efficient approximation of nonlinear saturation (e.g. $\tanh(x)$) to aid in numerical simulation.
\vs
In this case, the distributed delay terms may also be thought of as smoothing out the noise or high-frequency price flunctuations often seen in real markets. This forms a multi-agent system where the goal of each agent is price discovery. The existence of consensus at $x\equiv0$ has a natural interpretation as representing an efficient, arbitrage-free market, where all 8 assets are priced at their true value. Multiconsensus represents situations where different clusters of assets are undergoing boom and bust cycles, which could also be viewed as the formation and collapse of pricing bubbles. 
\vs
To choose parameters, we will suppose that momentum traders look at trends over the past month, $\tau_1 = 20$, and fundamentalists over the past quarter, $\tau_2=60$. We will set $a = 0.5$, $b =0.2$, $\gamma = 0.04$, $c_1 = 0.15$, $c_2 = 0.05$, and $c_3 = 0.01$, and $L_{\text{sat}} = 2$. This leads to the following linearized system:
\begin{equation}\label{eq:example-sys-lin}
\frac{d}{dt} \left[x - 0.2 \int_0^{20} x(t-s)ds\right] + 0.1x + 0.5\alpha \int_0^{60} x(t-s)ds + C = 0
\end{equation}
where
\[
C = \begin{bmatrix}
0 & 0.15 & 0.15 & 0.05 & 0.15 & 0.05 & 0.05 & 0.01 \\
0.15 & 0 & 0.05 & 0.15 & 0.05 & 0.15 & 0.01 & 0.05 \\
0.15 & 0.05 & 0 & 0.15 & 0.05 & 0.01 & 0.15 & 0.05 \\
0.05 & 0.15 & 0.15 & 0 & 0.01 & 0.05 & 0.05 & 0.15 \\
0.15 & 0.05 & 0.05 & 0.01 & 0 & 0.15 & 0.15 & 0.05 \\
0.05 & 0.15 & 0.01 & 0.05 & 0.15 & 0 & 0.05 & 0.15 \\
0.05 & 0.01 & 0.15 & 0.05 & 0.15 & 0.05 & 0 & 0.15 \\
0.01 & 0.05 & 0.05 & 0.15 & 0.05 & 0.15 & 0.15 & 0
\end{bmatrix},
\]
We note that system $\eqref{eq:example-sys-lin}$ satisfies assumptions \ref{c1} -- \ref{c3}. Before taking its isotypic decomposition to apply the main result, we will first make some brief remarks on the group theory of the symmetry group of rigid motions of a cube/octahedron, denoted $\mathbb O$.
\vs
First we note that since $C$ can be written as a weighted sum of adjacency matrices of three undirected cubic graphs corresponding to cubic edges, face diagonals, and space diagonals, respectively. Therefore, $C$ is clearly $\mathbb O$-equivariant. To better describe the action of $\mathbb O$ on our 8 markets, we will consider $\mathbb O \cong S_4 \leq S_8$, the full symmetry group of the 8 vertices. $\mathbb O$ has order 24 and can be generated by a rotation of order 4 about an axis connecting the centers of two opposite faces, a rotation of order 3 about an axis connecting two space diagonal opposite vertices, and a rotation of order 2 about an axis between the midpoints of two opposite edges. We note the following correspondence between elements of $\mathbb O$ (written as permutations in $S_4$) and elements in $S_8$:
\begin{table}[tbp]
\centering
\small  
\begin{tabular}{lll}
Rotation type & $S_4$ representative & $S_8$ representative\\
\hline
Face rotation   & (1234)       & (1243)(5687) \\
Vertex rotation & (142)        & (283)(167)   \\
Edge rotation   & (34)         & (18)(27)(34)(56) \\
\end{tabular}
\caption{Generators of $S_4$ with corresponding generators in $S_8$.}
\end{table}
Let $\mathbb O$ act by permuting vertices in $V:=\mathbb R^8$. Then $V$ is an isometric $\mathbb O$-representation with the following character table:
\begin{table}[tbp]
\centering
\small
\begin{tabular}{|c|ccccc|}
\hline
con. classes & $(1)$ & $(12)$ & $(12)(34)$ & $(123)$ & $(1234)$ \\ \hline
$\chi_0$ & $1$ & $1$ & $1$ & $1$ & $1$ \\
$\chi_1$ & $1$ & $-1$ & $1$ & $1$ & $-1$ \\
$\chi_2$ & $2$ & $0$ & $2$ & $-1$ & $0$ \\
$\chi_3$ & $3$ & $-1$ & $-1$ & $0$ & $1$ \\
$\chi_4$ & $3$ & $1$ & $-1$ & $0$ & $-1$ \\ \hline
$\chi_V$ & $8$ & $0$ & $0$ & $2$ & $0$ \\ \hline
\end{tabular}
\caption{Irreducible characters of $S_4$ with character of $V$.}
\label{table:char}
\end{table}
Let $\Gamma = \mathbb O \times \bbZ_2$. Then $V$ is also an isometric $\Gamma$-representation, and from the above character table we immediately obtain the isotypic decomposition
\[
V = V_0 \oplus V_1 \oplus V_3 \oplus V_4
\]
where each $V_i$ has isotypic multiplicity 1 and is modeled on the $\cV_i^-$ irreducible representation, i.e., the $\cV_i$ irreducible $\mathbb O$-representation equipped with the antipodal $\bbZ_2$-action. Let $G = S^1 \times \mathbb O \times \bbZ_2$. Then our system \eqref{eq:example-sys-lin} can be reformulated as a $G$-symmetric two-parameter bifurcation problem as described in Section \ref{sec:funcspace} and our main results can be applied. We compute the eigenspaces of $C$ by looking for $S_4$-invariant subspaces corresponding to the irreducible representations in the isotypic decomposition:
\begin{align*}
    E(\mu_0) = \text{span}(&(1,1,1,1,1,1,1,1)^T) \quad &\mu_0 &= 3c_1 +3c_2 +c_3\\
    \\
    E(\mu_1) = \text{span}(&(1,-1,-1,1,-1,1,1,-1)^T) \quad &\mu_1 &= -3c_1 + 3c_2 - c_3\\
    \\
    E(\mu_3) = \text{span}(&(1,-1,-1,1,1,-1,-1,1)^T, \quad &\mu_3 &= -c_1 - c_2 + c_3\\
    &(1,1,-1,-1,-1,-1,1,1)^T,\\
    &(1,-1,1,-1,-1,1,-1,1)^T)\\
    \\
    E(\mu_4) = \text{span}(&(1,1,1,1,-1,-1,-1,-1)^T,\quad &\mu_4 &= c_1 - c_2 - c_3\\&(1,1,-1,-1,1,1,-1,-1)^T,\\
    &(1,-1,1,-1,1,-1,1,-1)^T)\\
\end{align*}
By computing the traces of conjugacy classes of permutations in $\mathbb O$ on these eigenspaces and comparing them to Table \ref{table:char}, one immediately finds $E(\mu_0) = V_0$, $E(\mu_1) = V_1$, $E(\mu_3) = V_3$, and $E(\mu_4) = V_4$. Therefore we have
\begin{align*}
    a_0 &= a + 3c_1 + 3c_2+c_3 &= 0.5 +0.45 + 0.15 + 0.01 &= 1.11\\
    a_1 &= a - 3c_1+3c_2-c_3 &= 0.5 -0.45 + 0.15 - 0.01 &= 0.19\\
    a_3 &= a - c_1-c_2+c_3 &= 0.5 -0.15 - 0.05 + 0.01 &= 0.31\\
    a_4 &= a + c_1-c_2-c_3 &= 0.5 +0.15 - 0.05 - 0.01 &= 0.59
\end{align*}

Now we can compute the critical set on each isotypic component. For the purposes of this example and numerical simulations, we will compute explicit values only for $0<\beta_{n,j} < 1$ and take only the first few values of $\alpha_{n,j}$. 

\vs
By numerically solving equations \eqref{eq:coincidence} with high machine precision, we obtain the following approximate values for $\alpha_{n,j},\beta_{n,j}$, and crossing number $\mathfrak t(\alpha_{n,j},\beta_{n,j})$:
\begin{align*}
V_0:\quad&\alpha_{1,0} = 4.24124372, & \beta_{1,0} = 0.10259439, &\quad\quad\mathfrak t(\alpha_{1,0},\beta_{1,0}) = -1\\
&\alpha_{2,0} =2.48213950,  & \beta_{2,0} = 0.20211387,&\quad\quad\mathfrak t(\alpha_{2,0},\beta_{2,0}) = -1\\
&\alpha_{3,0} = 3.24298830,  & \beta_{3,0} = 0.30501229,&\quad\quad\mathfrak t(\alpha_{3,0},\beta_{3,0}) = -1\\
V_1:\quad&\alpha_{1,1} = 0.09529711, & \beta_{1,1} = 0.09239073, &\quad\quad\mathfrak t(\alpha_{1,1},\beta_{1,1}) = -1\\
&\alpha_{2,1} =0.11199127,  & \beta_{2,1} =0.17498149, &\quad\quad\mathfrak t(\alpha_{2,1},\beta_{2,1}) = -1\\
&\alpha_{3,1} = 0.28822644,  & \beta_{3,1} = 0.27963367, &\quad\quad\mathfrak t(\alpha_{3,1},\beta_{3,1}) = -1\\
V_3:\quad&\alpha_{1,3} = 0.27955407, & \beta_{1,3} = 0.09705997, &\quad\quad\mathfrak t(\alpha_{1,3},\beta_{1,3}) = -1\\
&\alpha_{2,3} =0.22102020,  & \beta_{2,3} = 0.18546039, &\quad\quad\mathfrak t(\alpha_{2,3},\beta_{2,3}) = -1\\
&\alpha_{3,3} = 0.43829429, & \beta_{3,3} =0.28779672, &\quad\quad\mathfrak t(\alpha_{3,3},\beta_{3,3}) = -1\\
V_4:\quad&\alpha_{1,4} = 1.12332838, & \beta_{1,4} = 0.10070033, &\quad\quad\mathfrak t(\alpha_{1,4},\beta_{1,4}) = -1\\
&\alpha_{2,4} =0.70441289,  & \beta_{2,4} = 0.19591782, &\quad\quad\mathfrak t(\alpha_{2,4},\beta_{2,4}) = -1\\
&\alpha_{3,4} = 1.06011854, & \beta_{3,4} = 0.29795401, &\quad\quad\mathfrak t(\alpha_{3,4},\beta_{3,4}) = -1\\
\end{align*}
Here we can see that the first bifurcation point occurs on the $V_1$ isotypic component. Inspecting the corresponding one-dimensional eigenspace of $C$, we can see that this corresponds to a situation where edge-adjacent markets are in anti-phase, and face-diagonal markets are in phase. In other words, if periodic multiconsensus occurs on this isotypic component, then despite a positive correlation between the prices of these assets, one cluster of assets will be experiencing a pricing bubble while the other experiences a crash, and these boom/bust cycles will alternate due to the competing influences of fundamentalists and momentum traders.

\begin{figure}[tbp]
    \centering
    \begin{minipage}{0.45\textwidth}
        \centering
        \includegraphics[width=\linewidth]{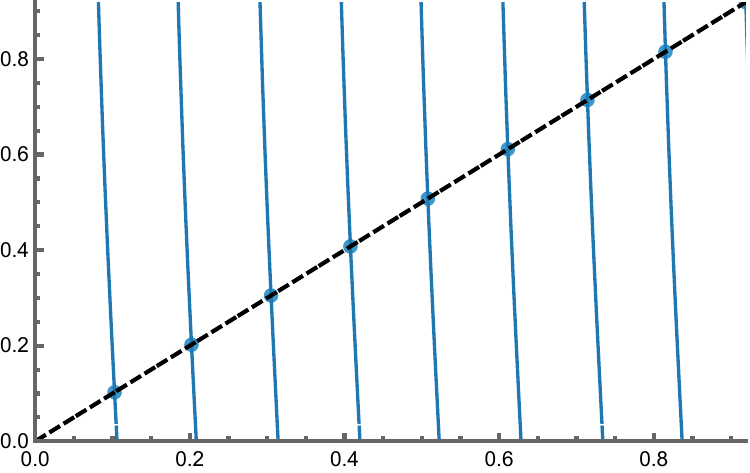}
        \subcaptiontext{a}{Limit frequencies $\beta_{n,0}$ on  $V_0$}
    \end{minipage}\hfill
    \begin{minipage}{0.45\textwidth}
        \centering
        \includegraphics[width=\linewidth]{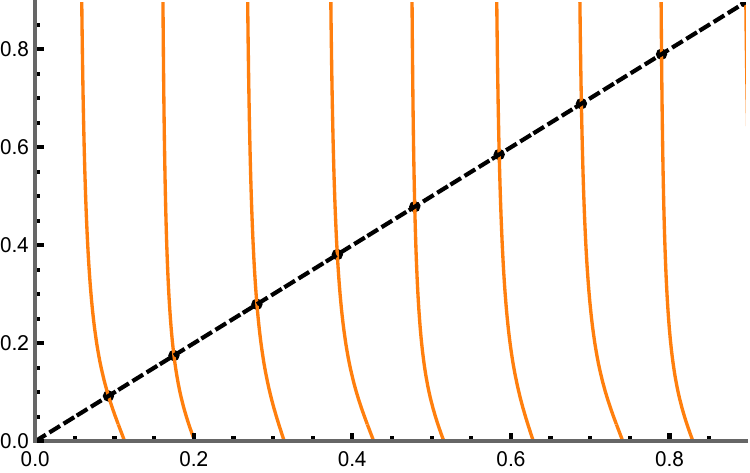}
        \subcaptiontext{b}{Limit frequencies $\beta_{n,1}$ on  $V_1$}
    \end{minipage}
    
    \vspace{0.5cm}
    
    \begin{minipage}{0.45\textwidth}
        \centering
        \includegraphics[width=\linewidth]{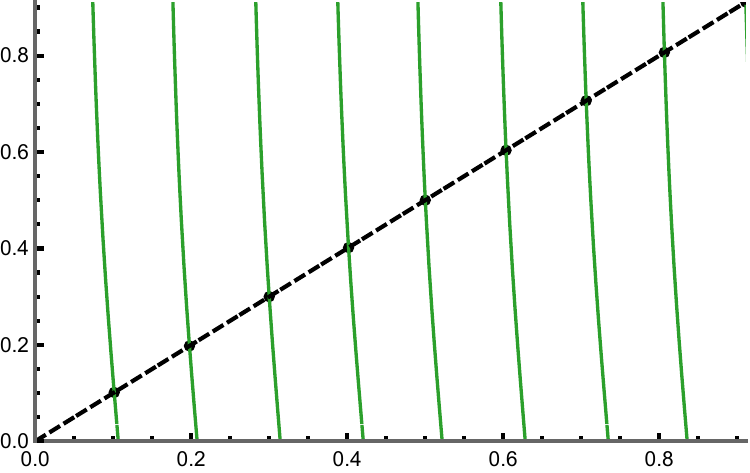}
        \subcaptiontext{c}{Limit frequencies $\beta_{n,3}$ on  $V_3$}
    \end{minipage}\hfill
    \begin{minipage}{0.45\textwidth}
        \centering
        \includegraphics[width=\linewidth]{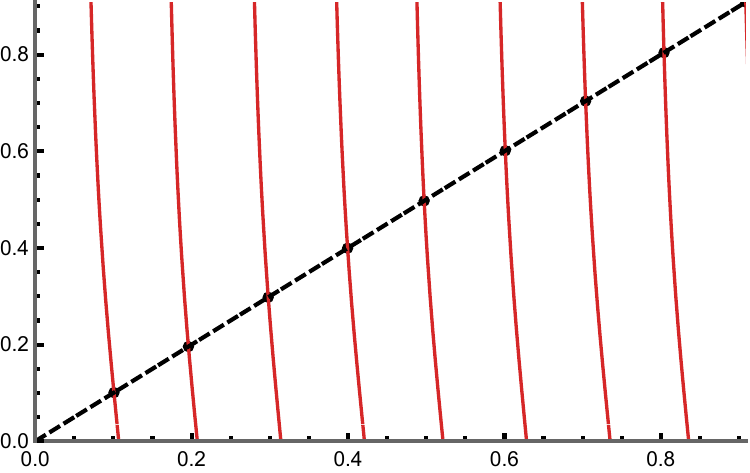}
        \subcaptiontext{d}{Limit frequencies $\beta_{n,4}$ on  $V_4$}
    \end{minipage}
    \caption{Coincidence plots of \eqref{eq:coincidence} across isotypic components}
    \label{fig:beta-coinc}
\end{figure}
\begin{figure}[tbp]
    \centering
    \begin{minipage}{0.45\textwidth}
        \centering
        \includegraphics[width=\linewidth]{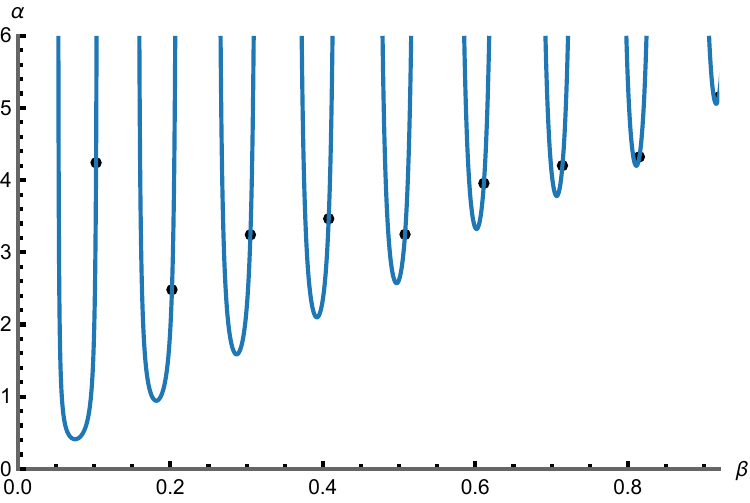}
        \subcaptiontext{a}{Critical points $\alpha_{n,0}$ on  $V_0$}
    \end{minipage}\hfill
    \begin{minipage}{0.45\textwidth}
        \centering
        \includegraphics[width=\linewidth]{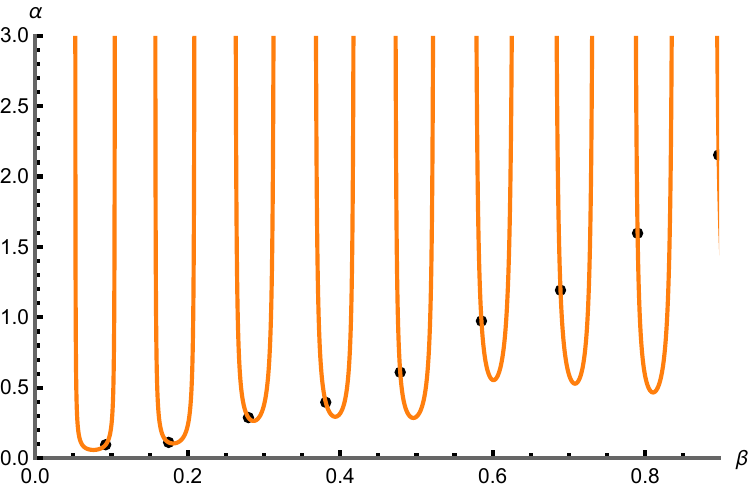}
        \subcaptiontext{b}{Critical points $\alpha_{n,1}$ on  $V_1$}
    \end{minipage}
    
    \vspace{0.5cm}
    
    \begin{minipage}{0.45\textwidth}
        \centering
        \includegraphics[width=\linewidth]{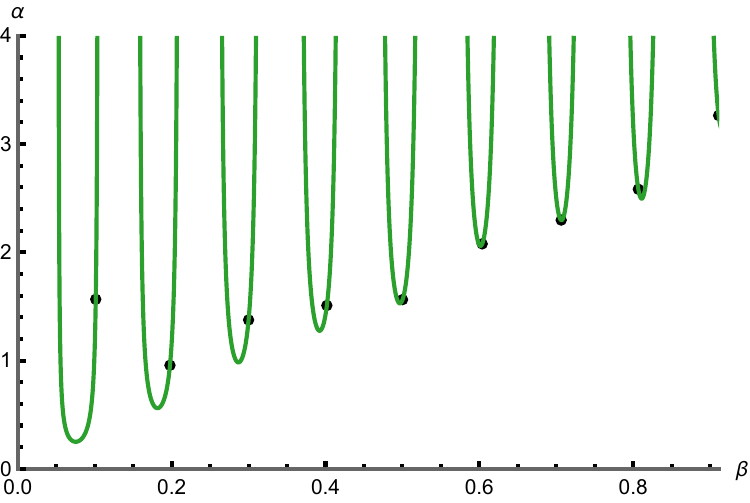}
        \subcaptiontext{c}{Critical points $\alpha_{n,3}$ on  $V_3$}
    \end{minipage}\hfill
    \begin{minipage}{0.45\textwidth}
        \centering
        \includegraphics[width=\linewidth]{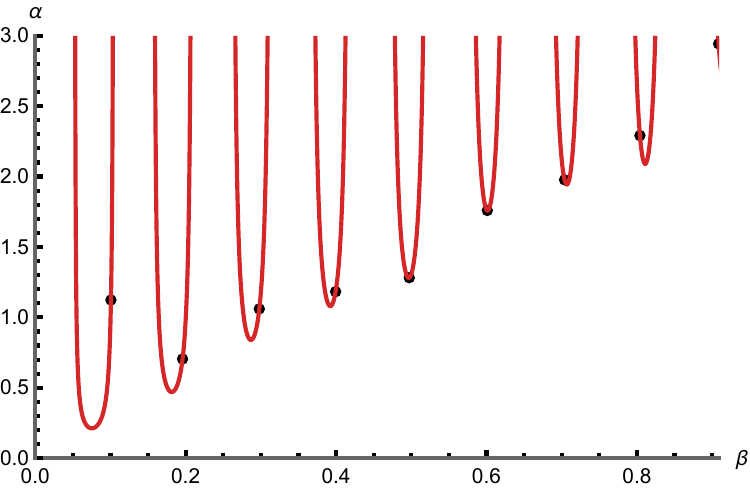}
        \subcaptiontext{d}{Critical points $\alpha_{n,4}$ on  $V_4$}
    \end{minipage}
    \caption{Critical points $\alpha_{n,j}$ corresponding to limit frequencies $\beta_{n,j}$ on each isotypic component, with $0<\alpha_{n,j}<1$}
    \label{fig:alpha-coinc}
\end{figure}

\vs
For each $V_j$, the corresponding irreducible $G$-representations are given by $\mathcal V_{m,j}^-$. Using GAP with the Equideg package, we can compute the twisted basic degrees $\deg_{\mathcal V_{m,j}^-}$, yielding
\begin{align*}
    \text{deg}_{\cV_{m,0}^-} = &-( \bbZ_{2m} {}^{\bbZ_m}\times {}^{S_4} S_4^p)\\
    \text{deg}_{\cV_{m,1}^-} = &-( \bbZ_{2m} {}^{\bbZ_m}\times {}^{S_4^-} S_4^p)\\
    \text{deg}_{\cV_{m,3}^-} = &(\bbZ_{2m} {}^{\bbZ_m}\times {}^{D_4^z} D_4^p) + (\bbZ_{2m} {}^{\bbZ_m}\times {}^{D_3^z} D_3^p) + (\bbZ_{2m} {}^{\bbZ_m}\times {}^{D_2^d} D_2^p)+(\bbZ_{4m} {}^{\bbZ_m}\times {}^{\bbZ_2^-} \bbZ_4^p)+\\ 
    &(\bbZ_{6m} {}^{\bbZ_m}\times \bbZ_3^p) - (\bbZ_{2m} {}^{\bbZ_m}\times {}^{\bbZ_2^-} D_2^p) - (\bbZ_{2m} {}^{\bbZ_m}\times {}^{D_1^z} D_1^p)\\
    \text{deg}_{\cV_{m,4}^-} =&(\bbZ_{2m} {}^{\bbZ_m}\times {}^{D_4^d} D_4^p) + (\bbZ_{2m} {}^{\bbZ_m}\times {}^{D_3} D_3^p) + (\bbZ_{2m} {}^{\bbZ_m}\times {}^{D_2^d} D_2^p)+(\bbZ_{4m} {}^{\bbZ_m}\times {}^{\bbZ_2^-} \bbZ_4^p)+\\ 
    &(\bbZ_{6m} {}^{\bbZ_m}\times \bbZ_3^p) - (\bbZ_{2m} {}^{\bbZ_m}\times {}^{\bbZ_2^-} D_2^p) - (\bbZ_{2m} {}^{\bbZ_m}\times {}^{D_1} D_1^p)
\end{align*}
GAP can also be used to identify which of these orbit types are maximal, which yields:
\begin{align*}
    \mathfrak M_0 = \{&( \bbZ_{2m} {}^{\bbZ_m}\times {}^{S_4} S_4^p)\}\\
    \mathfrak M_1 = \{&( \bbZ_{2m} {}^{\bbZ_m}\times {}^{S_4^-} S_4^p)\}\\
    \mathfrak M_3 = \{&(\bbZ_{2m} {}^{\bbZ_m}\times {}^{D_4^z} D_4^p),(\bbZ_{2m} {}^{\bbZ_m}\times {}^{D_3^z} D_3^p),(\bbZ_{2m} {}^{\bbZ_m}\times {}^{D_2^d} D_2^p),(\bbZ_{4m} {}^{\bbZ_m}\times {}^{\bbZ_2^-} \bbZ_4^p),\\ 
    &(\bbZ_{6m} {}^{\bbZ_m}\times \bbZ_3^p)\}\\
    \mathfrak M_4 =\{&(\bbZ_{2m} {}^{\bbZ_m}\times {}^{D_4^d} D_4^p),(\bbZ_{2m} {}^{\bbZ_m}\times {}^{D_3} D_3^p),(\bbZ_{2m} {}^{\bbZ_m}\times {}^{D_2^d} D_2^p),(\bbZ_{4m} {}^{\bbZ_m}\times {}^{\bbZ_2^-} \bbZ_4^p),\\ 
    &(\bbZ_{6m} {}^{\bbZ_m}\times \bbZ_3^p)\}
\end{align*}
Hence, by Theorems \ref{thm:mas-local}, \ref{thm:mas-global}, and \ref{thm:mas-sym}, if a bifurcation point occurs on the component $V_j$, then there is a distinct global branch of non-constant periodic solutions having symmetries $(H)$ for each $(H)\in \mathfrak M_j$. Since, in this example, the first bifurcation point occurs on the $V_1$ component, there must be a branch of non-constant periodic solutions having symmetries at least $( \bbZ_{2m} {}^{\bbZ_m}\times {}^{S_4^-} S_4^p)$. To determine if this solution represents multiconsensus, we will numerically simulate the system with a perturbation initialized on the $V_1$ subspace for values of $\alpha$ near $\alpha_{1,1} = 0.09529711$. 
\begin{figure}[h]\label{fig:bif-diagram}
    \centering
    \includegraphics[width=\linewidth]{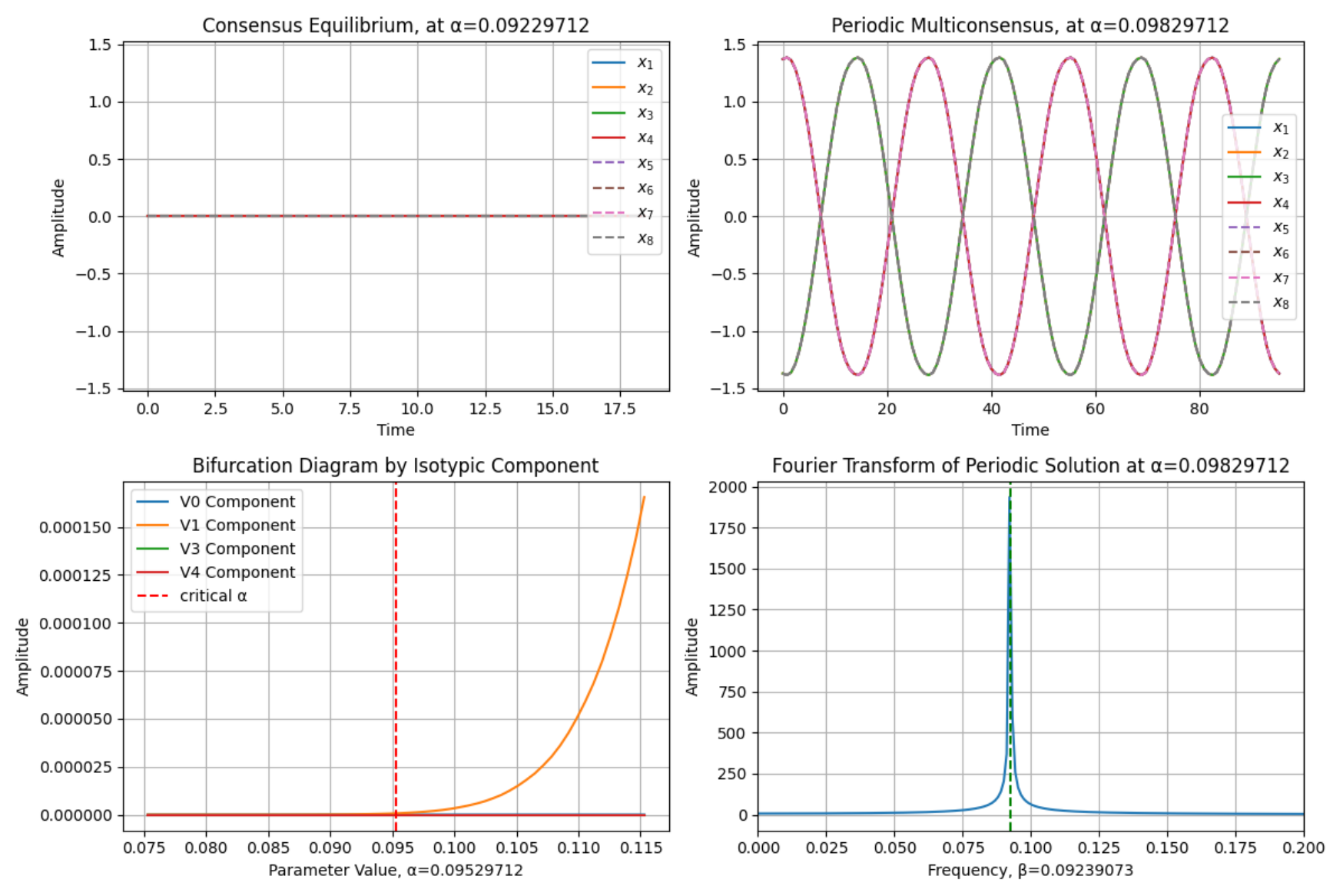}
    \caption{Time series of solution (after discarding initial transient) initialized by a perturbation near $x\equiv0$ on either side of $\alpha_{1,1}$. The dashed red line in the bottom left figure represents the value of $\alpha_{1,1}$, where bifurcation is predicted to occur, and the dashed green line in the bottom right plot represents the value of the limit frequency $\beta_{1,1}$ compared with the fast Fourier transform of the time series data.}
\end{figure}
The simulation results illustrate the local stability of the $x\equiv 0$ consensus for $\alpha<\alpha_{1,1}$, as predicted by Theorem \ref{thm:mas-asymp}. For $\alpha > \alpha_{1,1}$, small perturbations are attracted to a locally stable non-constant periodic solution. Taking a fast Fourier transform of the time series data of the periodic solution, we also see that it has a limit frequency corresponding to $\beta_{1,1}$. By projecting the time series data onto the isotypic components, and taking its maximal value as we sweep across $\alpha$ around the bifurcation point, we see that the amplitude of the periodic solution grows as $\alpha$ grows. Since random perturbations of $x\equiv 0$ are all attracted to this same periodic solution for $\alpha>\alpha_{1,1}$ near the bifurcation point, we infer numerically that this solution is locally asymptotically stable and hence represents periodic multiconsensus. This corresponds to a cycle of price bubbles having a period of approximately 10.8 days, during which one cluster of assets is overvalued and the opposite cluster is correspondingly undervalued. 
\section{Conclusions and discussion}\label{sec:conclusion}
In this paper, we introduced equations for the closed-loop dynamics of a multi-agent system where agents have a continuous internal memory of both their own state and their own derivative. We showed sufficient conditions on the parameters for consensus at the origin, and gave theorems for global Hopf bifurcation of symmetric branches of solutions following the breakdown of consensus. 
\vs
We then formulated an economic model as one illustrative example of the type of system which could naturally be modeled by these dynamics, and showed the symmetric global bifurcation of this system and demonstrated numerically that the resulting branch of periodic solutions constitutes periodic multiconsensus. 
\vs
If more rigorous stability analysis is required, methods such as numerical approximation of Lyapunov exponents or Floquet multipliers, or numerical branch continuation could be used, but were not applied in this paper for the purposes of our example. 
\vs
The system studied in this paper was inspired by natural observations of human decision-making. It is intuitive and plausible that, in many situations, agents consider weighted continuous averages of past states (potentially including past derivatives) in order to inform future decisions, especially in situations where those agents may know that their input data is noisy (or where designers of control protocols anticipate and account for this). In such situations, distributed delay terms of retarded type can add a natural smoothing, although the effects of distributed delays of neutral type are less well-studied.
\vs
One potential extension of the results in this paper is to explicitly add some stochastic noise to the system to study these smoothing properties in more detail. Another could be the addition of transmission delays in the coupling terms, or considering the delay term as the bifurcation parameter. Our motivation for considering the strength of continuous memory as the bifurcation parameter in this system was to study the effects of the relative weight of the retarded and neutral type delay terms on the overall dynamics.

\appendix
\section{Numerical simulation and visualization}
Mathematica's numerical solver was used with infinite precision to approximate the limit frequencies and corresponding critical points. To simulate the closed-loop dynamics equation, we used the JiTCDDE library in Python, developed by Gerrit Ansmann \cite{ansmann2018}. 
\vs
As is typically the case in numerical simulations of distributed delay differential equations, the distributed delays were discretized using Guass-Legendre quadrature. For the above plots, 50 points of quadrature were used across each of the intervals $[0,\tau_1]$ and $[0,\tau_2]$. To find the periodic solution, the zero solution was perturbed by a small perturbation on the $V_1$ isotypic component. This was numerically integrated  from $t=0$ to $t=100$ using $60000$ sampling points. Following this, the initial $99\%$ of these time series were considered as transients and discarded to obtain the dynamics near the attractor. 
\vs
Another difficulty in numerically analyzing NFDEs is handling the initial conditions. In principle, one must supply a state history (including the derivative in neutral equations) which is compatible with the dynamics, which can be very difficult. Compounding this difficulty is the fact that the neutral operator can sometimes propagate discontinuities. In our case, we initialized a trivial state history and instructed the solver to continue stepping on discontinuities. After a sufficient time interval, this results in a compatible state history. Since we are interested in the long-term dynamics and are discarding the initial time series data, this did not present any obstacles in this case.
\section{Condensing maps and equivariant Nussbaum-Sadovskii degree}\label{appendix:n-s}
This appendix will formulate some basic facts and definitions related to \textit{condensing maps} and the Nussbaum-Sadovskii equivariant degree which is defined on such maps. The equivariant Leray-Schauder degree is defined on compact fields, i.e. fields derived from compact maps, which map closed and bounded sets to precompact sets. The Nussbaum-Sadovskii equivariant degree extends this to condensing fields. In order to describe this degree, we must first establish some preliminary definitions. Throughout this appendix, $W$ is a real isometric Banach $G$-representation of a compact Lie group $G$, $V := \bbR^n \oplus W$ is equipped with the norm $||(\lambda,v)|| = \max\{||\lambda||,||v||\}$ where $G$ acts trivially on $\bbR^n$, and $\pi:V\to W$ is the natural projection on $W$. Finally, given a $G$-invariant subset $M \subset V$ and a $G$-equivariant map $F:M\to W$, the corresponding \textit{$G$-equivariant field} on $M$ is denoted $f:M \to W$, and is defined
\begin{equation}\label{eq:appendix-field}
f:=\ \pi - F
\end{equation}
With these conventions in place, we can now define condensing maps in terms of measures of noncompactness, and the Nussbaum-Sadovskii equivariant degree in terms of an extension of equivariant Leray-Schauder degree to condensing maps.

\subsection{Measures of noncompactness}
\vs
\begin{definition}
\normalfont
Let $V$ be a metric space (in our case it will also be an isometric Banach $G$-representation), and denote by $\mathcal M$ the class of all bounded subsets of $V$. Then a function $\mu: \mathcal M \to [0,\infty)$ is called a \textit{measure of noncompactness} on $V$ if, for all $A,B\in \mathcal M$, the following conditions are satisfied:
\begin{enumerate}
    \item $\mu(A) = 0 \iff \bar{A} \text{ is compact}$
    \item $\mu(A) = \mu(\bar{A})$
    \item $\mu(\text{conv}(A)) = \mu(A)$
    \item $\mu(A \cup B) = \max\{\mu(A),\mu(B)\}$
    \item $\mu(rA) = |r|\cdot\mu(A),\quad r\in\bbR$
    \item $\mu(A+B) \leq \mu(A) + \mu(B)$
\end{enumerate}
\end{definition}
\vs
There are many classical measures of noncompactness, but the one of greatest utility to us is the \textit{Kurotowski measure of noncompactness} $\alpha:\mathcal M \to [0,\infty)$ defined for $A\in\mathcal M$
\begin{equation}\label{eq:kurotowski}
\alpha(A) = \inf\left\{\varepsilon > 0: \bigcup_{i=1}^N A_i = A,\quad\text{diam}(A_i) < \varepsilon\quad \forall i\in 1,\dots,N\right\}
\end{equation}
\vs
\subsection{Condensing maps, fields, and homotopies}
\begin{definition}
\normalfont Let $V,W$ be metric spaces, $\mu$ a measure of noncompactness on $V$, $X\subset V$, $Y \subset W$, and $F:X \to Y$ a continuous map taking bounded subsets of $X$ to bounded subsets of $Y$. Then $F$ is
\begin{enumerate}
    \item a $\mu$-\textit{Lipschitzian map} with constant $L \geq 0$ if $\mu(F(A)) \leq L\mu(A)$ for all bounded subsets $A \subset X$
    \item a \textit{completely continuous map} if it is $\mu$-Lipschitzian with $L=0$
    \item a \textit{Darbo map} (or \textit{$\mu$-set contraction}) if it is $\mu$-Lipschitzian with $L<1$
    \item a \textit{condensing map} if it is $\mu$-Lipschitzian with $L = 1$ and $\mu(F(A)) < \mu(A)$ for every bounded subset $A \subset X$ such that $\mu(A) > 0$
\end{enumerate}
\end{definition}
\vs
From the above definitions, one can immediately see that any completely continuous map is also a Darbo map, and any Darbo map is also a condensing map. An equally immediate but more useful implication, for our purposes, is that any contraction map is condensing with respect to the Kurotowski measure of noncompactness \eqref{eq:kurotowski}. Indeed, if $X\subset V$ and $F:X\to W$ and there exists $0\leq q <1$ such that $|| F(x) - F(y) || \leq q || x - y ||$ for all $x,y \in X$, then $F$ is condensing with respect to \eqref{eq:kurotowski}.
\vs
Now we want to extend the equivariant Leray-Shauder degree to condensing fields. This can be done in much the same way that the non-equivariant Leray-Schauder degree is extended to condensing fields, shown in detail in \cite{akhmerov1992measures}, \cite{krawcewicz1997theory}, and summarized here.

\begin{definition}
\normalfont
Let $\Omega \subset V$ be an open bounded $G$-invariant subset and $F:\bar{\Omega} \to W$ a condensing $G$-equivariant map with respect to some (fixed) measure of noncompactness $\mu$ on $V$.
\begin{enumerate}
    \item The map $f$ defined according to \eqref{eq:appendix-field} is called a \textit{condensing field}.
    \item Two $G$-equivariant $\Omega$-admissible condensing fields $f_0 = \pi - F_0$ and $f_1 = \pi - F_1$ are called \textit{$G$-equivariantly homotopic} if there exists a $G$-equivariant condensing map $H:[0,1]\times \bar{\Omega} \to W$ with $h:=\pi - H$ such that $h(0,\cdot) = f_0,\; h(1,\cdot)=f_1$ and for each $\lambda \in [0,1]$, $h(\lambda, \cdot)$ is $\Omega$-admissible.
\end{enumerate}
The field $h$ will be referred to as a \textit{$G$-equivariant homotopy of condensing fields} joining $f_0$ and $f_1$. The corresponding map $H$ is called a \textit{condensing $G$-equivariant homotopy} between $F_0$ and $F_1$.
\end{definition}

\subsection{G-fundamental sets}
\begin{definition}
\normalfont    
Let $M \subset V$ be a $G$-invariant subset and $F:M \to W$ a $G$-equivariant map. A subset $Q \subset W$ is called \textit{$G$-fundamental} for $F$ if it satisfies the following conditions:
\begin{enumerate}
    \item $Q$ is non-empty, compact, convex, and $G$-invariant
    \item $F(M \cap Q) \subset Q$
    \item $x_0 \in M \setminus Q \implies x_0 \not\in \text{conv}(F(x_0)\cup Q)$
\end{enumerate}
\end{definition}
Note that a $G$-fundamental set $Q$ for $F$ contains all the fixed points of $F$. We similarly define a notion of a $G$-fundamental set for a $G$-equivariant deformation $H:[0,1]\times M \to W$ with the requirement that it be $G$-fundamental for each $H(\lambda,\cdot)$.
\vs
We also provide the following lemma, proven in \cite{akhmerov1992measures}. 
\begin{lemma}\label{lemma:appendix-fundamental}
Let $\Omega \subset V$ be an open bounded $G$-invariant subset, $H:[0,1]\times \bar{\Omega} \to W$ a $G$-equivariant $\Omega$-admissible condensing homotopy, $K\subset W$ an arbitrary $G$-invariant compact subset. Then there exists a $G$-fundamental set for $H$ containing $K$.
\end{lemma}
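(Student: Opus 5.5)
The statement to prove is Lemma~\ref{lemma:appendix-fundamental}: every $G$-equivariant condensing homotopy $H$ admits a $G$-fundamental set containing a prescribed $G$-invariant compact set $K$. The plan is to adapt the classical Sadovskii transfinite construction to the equivariant setting. Every step of that construction uses only convex hulls, closures, unions and images under $H$. Each of these operations preserves $G$-invariance, because $G$ acts linearly and isometrically on $W$ and $H$ is $G$-equivariant.

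\textbf{Step 1 (the fundamental family).} Fix a point $x^*\in K$. Let $\mathcal Q$ be the family of all closed, convex, $G$-invariant sets $Q\subset W$ such that
\[
K\subset Q, \qquad H([0,1]\times(\bar\Omega\cap Q))\subset Q .
\]
This family is nonempty. Indeed, $H([0,1]\times\bar\Omega)$ is bounded, since condensing maps take bounded sets to bounded sets. Hence
\[
Q_0:=\overline{\mathrm{conv}}\bigl(K\cup H([0,1]\times\bar\Omega)\bigr)
\]
is closed, convex and bounded. It is $G$-invariant by equivariance, and it belongs to $\mathcal Q$.

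Set $Q_*:=\bigcap_{Q\in\mathcal Q}Q$. This is again closed, convex, $G$-invariant, contains $K$, and lies in $\mathcal Q$. A standard minimality argument then gives
\[
Q_*=\overline{\mathrm{conv}}\bigl(K\cup H([0,1]\times(\bar\Omega\cap Q_*))\bigr).
\]
The reason is that the right-hand side is itself a member of $\mathcal Q$ contained in $Q_*$. Checking that it is $G$-invariant only needs $gH(\lambda,x)=H(\lambda,gx)$ together with the $G$-invariance of $Q_*$ and $\bar\Omega$.

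\textbf{Step 2 (compactness, the key step).} Write $A:=\bar\Omega\cap Q_*$. Using the properties of the measure of noncompactness $\mu$ (invariance under closed convex hulls and under union with a compact set), we get
\[
\mu(A)\le\mu(Q_*)=\mu\bigl(H([0,1]\times A)\bigr).
\]
If $\mu(A)>0$, this contradicts the condensing property of $H$, which forces $\mu(H([0,1]\times A))<\mu(A)$. So $A$ is relatively compact, hence $H([0,1]\times A)$ is relatively compact, and therefore $Q_*$ is compact.

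I expect this step to be the main obstacle. The condensing condition must be applied to the homotopy as a whole on $[0,1]\times A$, not fibrewise in $\lambda$. I would handle this by relying on the definition of a condensing homotopy in the appendix, which is condensing jointly in $(\lambda,x)$.

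\textbf{Step 3 (conditions (1)--(3) of $G$-fundamentality).} Conditions (1) and (2) are immediate from Steps 1 and 2. For (3), fix $\lambda$ and suppose
\[
x_0\in\bar\Omega\setminus Q_*, \qquad x_0\in\mathrm{conv}\bigl(H(\lambda,x_0)\cup Q_*\bigr).
\]
Put $Q':=\mathrm{conv}(Q_*\cup\{x_0\})$ and let $Q''$ be the $G$-invariant closed convex hull of $Q'$, that is, the closed convex hull of $G\cdot Q'$. The set $Q''$ is compact, because $G$ is compact and acts continuously, so $G\cdot(\text{compact})$ is compact and its closed convex hull in a Banach space is compact.

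I then want to show $Q''\in\mathcal Q$, which would contradict the minimality of $Q_*$. This is where the classical argument needs care: it uses the standard trick of replacing $Q_*$ by the minimal set built for the single map $H(\lambda,\cdot)$ with $K$ enlarged to include $x_0$. So I would instead define $\mathcal Q$ from the outset, following Akhmerov et al., as the family of closed convex $G$-invariant $Q\supset K$ such that
\[
\mathrm{conv}\bigl(H(\lambda,x)\cup Q\bigr)\ni x \ \Longrightarrow\ x\in Q \quad\text{for all }\lambda .
\]
This family is stable under intersections. With this definition, condition (3) holds by construction, and the compactness argument of Step 2 goes through unchanged. $G$-invariance is again preserved by equivariance. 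This completes the plan.
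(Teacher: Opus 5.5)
Your overall strategy is the classical Sadovskii minimal-set argument from Akhmerov et al., which the paper simply cites without proof. Carrying $G$-invariance through closed convex hulls, intersections and images under the equivariant $H$ is correct, and Step~2 is fine once the condensing property is taken jointly on $[0,1]\times A$.

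\textbf{The gap is in your final repair of Step~3.} You correctly saw that the minimal set of your first family can violate condition~(3). For example, take $W=\mathbb R$, $\bar\Omega=[-R,R]$, $K=\{0\}$ and $H(\lambda,x)=2x$: the minimal set is $\{0\}$, yet every $x$ lies in $\mathrm{conv}\{2x,0\}$. The $Q''$ idea also cannot help, because a member of $\mathcal Q$ that contains $Q_*$ contradicts nothing.

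Your redefined family, as written, imposes (3) \emph{instead of} the invariance condition $H([0,1]\times(\bar\Omega\cap Q))\subset Q$. Condition (3) alone does not give (2). With $H(\lambda,x)=x+1$, $\bar\Omega=[-R,R]$ and $K=\{0\}$, the minimal such set is $[0,R]$, but $H(R)=R+1\notin[0,R]$.

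More importantly, even if you keep both conditions, the claim that Step~2 ``goes through unchanged'' is not justified. Step~2 rests on the identity
\[
Q_*=T_1:=\overline{\mathrm{conv}}\bigl(K\cup H([0,1]\times(\bar\Omega\cap Q_*))\bigr).
\]
Your Step~1 proof of this identity only checked that $T_1$ is $H$-invariant and $G$-invariant. For the new family, $T_1\in\mathcal Q$ also requires $T_1$ to satisfy (3). That is exactly where (3) for $Q_*$ must be used, and you do not address it.

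\textbf{The fix is short.}
\begin{itemize}
\item Let $\mathcal Q$ consist of all closed convex $G$-invariant $Q\supset K$ satisfying both (2) and (3).
\item $\mathcal Q$ is nonempty: it contains $Q_0$, because for $x\in\bar\Omega$ we have $H(\lambda,x)\in Q_0$, so $\mathrm{conv}(H(\lambda,x)\cup Q_0)=Q_0$.
\item Both conditions pass to intersections, since $\mathrm{conv}(H(\lambda,x)\cup\bigcap_\alpha Q_\alpha)\subset\mathrm{conv}(H(\lambda,x)\cup Q_\alpha)$.
\item For $Q_*=\bigcap\mathcal Q$, condition (2) gives $T_1\subset Q_*$. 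Hence $T_1$ satisfies (2).
\item For (3): suppose $x\in\mathrm{conv}(H(\lambda,x)\cup T_1)$. Then $x\in\mathrm{conv}(H(\lambda,x)\cup Q_*)$, so $x\in Q_*$ by (3) for $Q_*$. Therefore $H(\lambda,x)\in T_1$, and $\mathrm{conv}(H(\lambda,x)\cup T_1)=T_1\ni x$.
\end{itemize}
Thus $T_1\in\mathcal Q$, so $T_1=Q_*$, and your compactness argument then applies verbatim.

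Two small points remain:
\begin{itemize}
\item If $K=\emptyset$, replace it by the $G$-fixed set $\{0\}$ to guarantee nonemptiness.
\item Since $Q\subset W$ while $\Omega\subset V=\mathbb R^n\oplus W$, read $\bar\Omega\cap Q$ as $\bar\Omega\cap\pi^{-1}(Q)$, and read $x$ as $\pi(x)$ in (3). The estimate $\mu(\bar\Omega\cap\pi^{-1}(Q_*))\le\mu(Q_*)$ still holds because the $\mathbb R^n$-component is bounded and finite-dimensional.
\end{itemize}
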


Now we can define the equivariant Nussbaum-Sadovskii degree as an extension of the equivariant Leray-Schauder degree.

\subsection{Equivariant Nussbaum-Sadovskii degree}
Assume that $\Omega \subset V$ is an open bounded $G$-invariant subset, $\pi - F:\bar{\Omega} \to W$ a $G$-equivariant condensing $\Omega$-admissible field. Let $Q \subset W$ be a $G$-fundamental set for $F$ provided by Lemma \ref{lemma:appendix-fundamental}. By the equivariant Dugundji theorem, there exists a $G$-equivariant extension $\tilde{F}: V \to Q$ of the map $F_{|\bar{\Omega}\cap Q}$. Put $\bar{F} := \tilde{F}_{|\bar{\Omega}}$. Take the compact $G$-equivariant field $\pi - \bar{F}$ and put
\begin{equation}\label{eq:nussbaum-sadovskii}
    \text{deg}_G(\pi - F, \Omega) := \text{deg}_G(\pi - \bar{F}, \Omega)
\end{equation}
We call $\text{deg}_G(\pi - F,\Omega)$ the \textit{Nussbaum-Sadovskii equivariant degree}. Using Lemma \ref{lemma:appendix-fundamental}, it is easy to show that the definition given by \eqref{eq:nussbaum-sadovskii} neither depends on a choice of a $G$-fundamental set, nor on an equivariant extension $\bar{F}$ (see \cite{akhmerov1992measures}, \cite{krawcewicz1997theory} for more details of this construction).

\section{Notation for twisted subgroups and amalgamated subgroups}\label{appendix:amalg}
It is often necessary in applications of equivariant theories to specify subgroups of product groups in terms of subgroups of the factor groups. The framework for doing so is provided by the famed Goursat's lemma. This lemma describes a subdirect product of $G_1 \times G_2$ in terms of subgroups $H_1 \leq G_1$, $H_2\leq G_2$, and two epimorphisms $\varphi:H_1 \to L$ and $\psi:H_2\to L$. If we denote $H_1' := \ker \varphi$ and $H_2' := \ker \psi$, then this is equivalent to specifying a particular isomorphism of the quotient groups $H_1/H_1' \cong K_1/K_1'$, which specify how elements of $H_1$ and $H_2$ may be paired together in the subdirect product.
\vs
When taken in the context of equivariant degree, where we are interested in conjugacy classes of subgroups, we only need to define this structure up to conjugacy class, and so it is sufficient to provide the groups $H_1,H_1',H_2,H_2'$, and the group $L$ and particular choice of epimorphisms may simply be implied. This leads to the amalgamated notation used in many works in equivariant degree theory (cf. \cite{krawcewicz1997theory,wieslawbook}, among others), which writes this subdirect product as 
\[
{H_1}^{H_1'} \times {}^{H_2'}H_2,
\]
where, as above, $H_1/H_1' \cong H_2/H_2'$. When taking products of three subgroups, as is the case in this paper, it is often desirable to write the subdirect products of $\Gamma_0 \times \mathbb Z_2$ in a more compact notation. In this case, we adopt the notational conventions of Golubitsky, Stewart, and Schaeffer in \cite{golubitsky2012}, which makes the following assignments:
\begin{align*}
    H^p &:= H \times \mathbb Z_2\\
    D_n^z &:= {D_n}^{\mathbb Z_n} \times {}^{\mathbb Z_1} \mathbb Z_2\\
    D_{2n}^d &:= {D_{2n}}^{D_n} \times {}^{\mathbb Z_1} \mathbb Z_2\\
    \mathbb S_4^- &:= {S_4}^{A_4} \times {}^{\mathbb Z_1} \mathbb Z_2\\
    \mathbb Z_{2n}^- &:= {\mathbb Z_{2n}}^{\mathbb Z_n} \times {}^{\mathbb Z_1} \mathbb Z_2
\end{align*}

\end{document}